\def\t{\xi}
\def\be{\begin{equation}}
\def\ee{\end{equation}}
\def\inte{\left[0,T \right)}
\def\bse{\begin{subequations}}
\def\ese{\end{subequations}}
\def\u{\mathbf{u}}
\def\ve{\mathbf{v}}
\def\x{\mathbf{x}}
\def\R{\mathbb{R}}
\def\o{\omega}
\def\O{\Omega}
\def\OO{\mathbf{\Omega}}
\def\k{\kappa}
\def\D{\mathbb{D}}
\def\LH{\mathcal{L}_H}
\def\la{\left\langle}
\def\ra{\right\rangle}
\def\lh{\la\la}
\def\rh{\ra\ra_{h}}
\def\PS{\mathbf{\Psi}}
\def\r{\rho}
\def\Rm{\mathcal{R}}
\def\ep{\varepsilon}
\def\F{\mathbf{F}}
\newcommand{\der}[3][]{\frac{\partial^{#1} #2}{\partial #3^{#1}}}
\newcommand{\dermix}[3]{\frac{\partial^{2} #1}{\partial #2 \partial #3}}
\newcommand{\diff}[3][]{\frac{d^{#1} #2}{d #3^{#1}}}
\newtheorem{defin}{Definition}[section]
\newtheorem{lemma}[defin]{Lemma}
\newtheorem{proposition}[defin]{Proposition}
\newtheorem{theorem}[defin]{Theorem}
\newtheorem{corr}[defin]{Corrolary}
\newtheorem{claim}[defin]{Claim}
\newtheorem*{rema}{Remark}
\numberwithin{equation}{section}
\author[]
{Boris Ettinger and Edriss S. Titi}
\thanks{ }
\address{(B.Ettinger) Department of Mathematics \newline \indent University of California \newline \indent Berkeley, California 94720, USA.} \email{ettinger@math.berkeley.edu}
\address{(E.S.Titi) Department of Mathematics and Department
\newline \indent of Mechanical and Aerospace
Engineering,\newline \indent University of California
\newline \indent Irvine, California 92697,USA. \newline \indent
 {\bf Also:} \newline \indent
 Department of Computer Science
and  Applied Mathematics \newline \indent Weizmann Institute of
Science \newline \indent Rehovot, 76100, Israel}
\email{etiti@math.uci.edu} \,\, \email{edriss.titi@weizmann.ac.il}
\date{February 14, 2008}
\subjclass[2000]{76B03,35Q35,35D05,76B47}
\keywords{Inviscid helical flows, three-dimensional Euler
equations.}
\title[Helical Euler]{Global Existence and Uniqueness of Weak Solutions
of 3-D Euler Equations with Helical Symmetry in the Absence of Vorticity Stretching}
\begin{document}
\begin{abstract}
We prove uniqueness and existence of the weak solutions of Euler equations with helical symmetry, with initial vorticity in $L^{\infty}$ under "no vorticity stretching" geometric constraint. Our article follows the argument of the seminal work of Yudovich. We adjust the argument to resolve the difficulties which are specific to the helical symmetry.
\end{abstract}
\maketitle
\section{Introduction}
\par Ideal incompressible homogeneous fluid of density $\r_0$ and confined in three-dimensional domain $\D\subseteq \R^3$ is governed by the Euler equations:
\bse
\label{eq:Ee}
\begin{align}
\label{eq:Eedyn}
\der{\u}{t}+(\u\cdot\nabla)\u&=-\frac{1}{\r_0}\nabla p+\mathbf{F},\\
\nabla\cdot\u&=0,
\end{align}
\ese
supplemented with the no-normal flow boundary conditions
\be
\u\cdot\mathbf{n}=0,\quad \text{on }\partial\D,\quad \text{where }\mathbf{n}\text{ is the normal vector to }\partial \D,
\ee
and initial velocity $\u_0(\x)$.
$\u:\D\times\inte\rightarrow \R^3$ is the velocity field, $p:\D\times\inte\rightarrow \R$ is the pressure, determined by the incompressibility condition and $\mathbf{F}:\D\times\inte\rightarrow \R^3$ is the given external body forcing term. We will consider constant density $\r_0=1$.
\par In this article, we will investigate the solutions of equations (\ref{eq:Ee}), which are invariant under a helical symmetry group $G^{\k}$. The group $G^\k$ is a one-parameter group of isometries of $\R^3$
\be
G^\k=\{S_{\rho}:\R^3\rightarrow \R^3|\rho\in \R\}.
\ee
The transformation $S_{\rho}$ ($S$ stands for "screw motion") is defined by:
\be
\label{eq:screw}
S_{\rho}\begin{pmatrix} x \\ y \\ z \end{pmatrix}=\begin{pmatrix} x\cos\r+y\sin\r \\ -x\sin\r+y\cos\r \\ z+\k\r \end{pmatrix},
\ee
where $\k$ is a fixed nonzero constant length scale. In fact, $S_\r$ is a superposition of a simultaneous rotation around the $\hat{z}$-axis with a translation along the $\hat{z}$-axis. The symmetry lines (orbits of $G^\k$) are concentric helices. We call the solutions, and more generally functions, which are invariant under $G^\k$ - "helical". Since the Euler equations in $\R^3$ are invariant under isometries, then (under mild assumptions of uniqueness) solving the Euler equations in a domain which is invariant under helical symmetry with a helical initial condition and a helical body forcing will give rise to a solution, which is helical for the whole interval of time of it's existence.
\par Observe that $S_{2\pi}$ is a translation by $2\pi\k$ in the $\hat{z}$ direction. Therefore, helical symmetry imposes a periodic boundary conditions in the $\hat{z}$ direction. We will also assume that the physical domain $\D\subseteq \R^3$ is bounded in the $\hat{x}$ and $\hat{y}$ directions, thus imposing a no-normal flow. We say that $\D$ is bounded in an (infinite) cylinder along the $\hat{z}$-axis, which has a finite radius. Since $\D$ is invariant under $G^\k$, it's boundary $\partial\D$ can be thought as being the union $\bigcup\limits_{\r\in\R}S_\r C$, where $C$ is a closed planar curve.
\par The difficulty to establish the global regularity for the solutions of the 3D Euler equations can be best appreciated, when one examines the evolution of the vorticity field $\OO=\nabla\wedge\u$, where $\nabla\wedge$ is the curl (rotor) operator. Taking the curl of both sides of equation (\ref{eq:Eedyn}) we get:
\be
\label{eq:Evort}
\der{\OO}{t}+(\u\cdot\nabla)\OO+(\OO\cdot\nabla)\u=\nabla\wedge\mathbf{F}.
\ee
The last term of the left-hand side, $(\OO\cdot\nabla)\u$, is called the vorticity stretching term. This term is the main obstacle to achieve global in time regularity of the three-dimensional Euler Equations, (see \cite{BT},\cite{Cons},\cite{MB} for the latest discussions of this question).  The difficulty remains after imposing the helical symmetry on the solution, since helical flows can undergo nontrivial vorticity stretching.
\par
We will therefore include an additional requirement. We will demand that the velocity field $\u=(u_x,u_y,u_z)^T$, where $u_x,u_y,u_z$ are components of the vector field in the basic directions, obeys the following constraint:
\be
\label{eq:ortas}
yu_x-xu_y+\k u_z=0.
\ee
This condition is an orthogonality of the velocity field to the symmetry lines of the group $G^\k$. This condition together with the assumption of the helical symmetry lead to vanishing of the vorticity stretching term.
We will prove in Section \ref{sc:geo} that under these conditions, the vorticity $\OO$ is directed along the symmetry lines and it's magnitude, up to a normalization is transported by the flow. This control of the $L^{\infty}$ norm of the vorticity is the key to our argument of global existence and uniqueness. It is consistent with the celebrated result of Beal-Kato-Majda \cite{BKM}, and the works of Yudovich \cite{Yud} and Ukhovskii and Yudovich \cite{UYU} for the 2D case and axi-symmetric flow (without swirl), respectively.
\par Our article is inspired by the seminal work of Yudovich \cite{Yud}, who proved existence and uniqueness for a certain class of weak solutions in two space dimensions. We adapt his ideas: the stream function weak formulation of the problem and the elliptic regularity to the case of helical flows. We follow closely his article, especially in Section \ref{sc:uni}, where we prove uniqueness. A different route would have been to extend the ideas of Bardos \cite{Bar}, adding viscosity with artificial boundary conditions and passing to the limit of viscosity going to zero. In such case, the equation with the added viscosity is not the Navier-Stokes equation, because of the different boundary conditions and the limit also has a non-physical boundary conditions.
\par In his work, Yudovich \cite{Yud} proved global existence and uniqueness of the solutions of the two-dimensional Euler equations, whose initial vorticity belongs to the space of essentially bounded functions, $L^{\infty}$. Uniqueness of the solutions was extended for a wider classes of functions in the work Yudovich himself \cite{Yud95} to the class of functions, which are not bounded but whose $L^p$ norms grow "slowly enough". Uniqueness and global existence was also proven by Vishik \cite{Vish} for vorticity in the Besov-like spaces. Global existence of solutions of the two-dimensional Euler equations was proven for initial vorticity $\OO_0\in L^p$ by Majda and DiPerna \cite{MdP} and also for solutions, whose initial vorticity is a positive Radon measure by Delort \cite{Del}. No uniqueness is known in these cases.
\par The helical flows fall within a class of "two-and-a-half" dimensional flows, namely flows in a three-dimensional domains with a certain continuous spatial symmetry. The most heavily investigated within this class are the axi-symmetric flows, which are invariant under a rotation around a certain axis of symmetry. The viscous axi-symmetric flows\footnote{In this article, we will speak always about incompressible flows.} were analyzed by Ladyzhenskaya \cite{Lad}, where the author had to assume that the domain of the flow does not contain a cylinder of a positive radius around the axis of symmetry, in order to prove global well-posedness. The inviscid axi-symmetric flows were the subject of Ukhovski and Yudovich \cite{UYU}, the authors had to assume that the vorticity vanishes rapidly enough near the axis of symmetry, namely $\frac{\o}{r}\in L^{\infty}$ to conclude global existence and uniqueness of the weak solutions. Both of these articles assume that the azimuthal, "swirl" component $u_{\theta}$ of the velocity is zero.
\par The assumption of zero azimuthal components for the axi-symmetric flows means that the vorticity stretching term in (\ref{eq:Evort}) is zero. There were several attempts to analyze the axi-symmetric flows with non-trivial vorticity stretching term. Chae and Immanuilov \cite{ChaIm} proved that solutions exist globally in time and are unique for a "generic" class of solutions. Moreover, Hou and Li \cite{HouLi} constructed recently a family of infinite energy solutions  that can undergo a very rapid and intense vorticity stretching yet do not develop a singularity and exist globally in time.
\par The global in time existence and uniqueness of viscous helical flows for all helical initial data was established by Mahalov et al \cite{MTL} without any restrictions, such as zero vorticity stretching, on the velocity field. Inviscid helical flows were the subject of Dutrifoy \cite{Dut}. The author of \cite{Dut} proves global existence and uniqueness of the classical solutions whose velocity field is orthogonal to the symmetry lines, by essentially using the fact that the vorticity is transported by such a flow and applying the Theorem of Beal, Kato and Majda \cite{BKM}. In this paper, we extend the conclusions of \cite{Dut} to a weaker class of solutions.
\par The helical symmetry introduces a complication, as opposed to planar and axi-symmetrical cases, because one cannot find an orthogonal coordinates, such that one of them runs along the symmetry lines.
Nevertheless, we were able to reduce the Euler equations in our problem to effective equations for two-dimensional velocity on a two-dimensional domain. Using this reduction, we produce an effective vorticity-stream function formulation, which allows us to proceed along the lines of Yudovich's work \cite{Yud}. A similar reduction was proposed in \cite[Section 3]{Dut}, but was not investigated to the full extent of it's consequences.
\par This article is organized as follows. In Section \ref{sc:geo}, we lay down the geometric framework and obtain the consequences of our assumptions to classical solutions of Euler Equations, with this we perform the reduction to the two-dimensional formulation. In Section \ref{sc:fun} we introduce the appropriate functional-analytic setting and formulation of the weak problem.
In Sections  \ref{sc:uni} and \ref{sc:exi}, we prove the uniqueness and the existence of the solutions of the weak problem, respectively.
\section{Geometrical Setting and Stream Function Formulation}
\label{sc:geo}
\par
In this section, we investigate the geometry of helical symmetry and of the orthogonality assumption, Equation \ref{eq:ortas}. We then rewrite the problem of the helical velocity field, which obeys the orthogonality assumption as a problem of two-dimensional velocity field on a two-dimensional domain which obeys a set of effective equations. We introduce the effective vorticity stream function formulation, which will be the basis of the analysis further in the article.
\par A note on our notation. We will usually specify vector and a vector field in a boldface e.g. $\u$ and it's coordinates in a normal font, with indices $x,y,z$ or $1,2,3$ e.g. $\u=(u_x,u_y,u_z)^T$ or $\u=(u_1,u_2,u_3)^T$.
We denote by $(\cdot,\cdot)$ the Euclidean inner product in $\R^3$. It is also used to denote the pointwise inner product of two vector fields.
\subsection{Invariance under helical symmetry}

Denote by $R_\r$ the rotation transformation around the $\hat{z}$ axis by an angle of $\r$ radians:
\be
R_\r=\begin{pmatrix}
\cos\r & \sin\r & 0 \\
-\sin\r & \cos \r & 0 \\
0 & 0 & 1
\end{pmatrix}.
\ee
With this notation, the action of $G^\k$ on $\R^3$ becomes:
\be
S_{\r}(\x)=R_\r(\x)+\begin{pmatrix} 0 \\ 0 \\ \k\r \end{pmatrix}=\begin{pmatrix} x\cos\r+y\sin\r \\ -x\sin\r+y\cos\r \\ z+\k\r \end{pmatrix}.
\ee
We solve our equation in a domain $\D$. Since we look for solutions, which obey boundary conditions imposed by $\D$, and since these solutions have helical symmetry, then $\D$ must also be invariant under a helical symmetry, namely be a union of helices.
\begin{defin}[Helical Domain] A domain $\D\subseteq \R^3$ is helical, if it is invariant under the action of $G^\k$, namely
\be
S_\r\D=\D,\quad \forall \r\in\R.
\ee
\end{defin}
For the rest of this article, we will assume the following assumption about the domain $\D$:
\begin{itemize}
\item $\D$ is helical,
\item $\D$ is simply connected,
\item $\D$ is contained in a cylinder of finite radius around the $\hat{z}$ axis.
\item $\D$ has a twice differentiable boundary.
\end{itemize}
The sole purpose of the simply connectedness assumption is to simplify our proof, it can be removed to yield a more general result.\par
The action of $G^{\k}$ on a scalar function is by composition:
\be
f^{S_\r}(\x)=f(S_{-\r}\x).
\ee
The action of $G^\k$ on vector fields has to take care of the underlying rotation:
\be
\mathbf{v}^{S_\r}(\x)=R_\r \mathbf{v}(S_{-\r}\x).
\ee
The following vector field will have an important role throughout this article:
\be
\label{eq:vect}
\vec{\t}=y \hat{x}-x\hat{y}+\k\hat{z}.
\ee
The vector field $\vec{\t}$ is the field of tangents of the symmetry lines of $G^{\k}$.
\begin{defin}[Helical Function]
\label{df:helf}
A (scalar) function $f:\R^3\rightarrow \R$ is called helical, if
\be
f(S_\r \x)=f(\x),\quad \forall\r\in\R.
\ee
Namely, it is invariant under the action of $G^\k$.
\end{defin}
\begin{claim}
\label{cl:charhelf}
A continuously differentiable function $f:\R^3\rightarrow \R$ is helical if and only if
\be
\label{eq:hesc}
y\der{f}{x}-x\der{f}{y}+\k\der{f}{z}=(\vec{\t},\nabla f)=0.
\ee
\end{claim}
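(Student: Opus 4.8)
The plan is to recognize $\vec{\t}$ as the infinitesimal generator of the one-parameter group $G^\k$, so that Claim \ref{cl:charhelf} becomes the standard statement that a $C^1$ function is invariant under a flow precisely when it is annihilated by the generating vector field.

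For the ``only if'' direction, assume $f$ is helical, i.e. $f(S_\r\x)=f(\x)$ for all $\r\in\R$ and all $\x\in\R^3$. Since $f\in C^1$ and $\r\mapsto S_\r\x$ is smooth, I may differentiate this identity in $\r$; evaluating the derivative at $\r=0$ and using the chain rule gives $0=\left(\nabla f(\x),\,\tfrac{d}{d\r}\big|_{\r=0}S_\r\x\right)$. A direct computation from (\ref{eq:screw}) shows $\tfrac{d}{d\r}\big|_{\r=0}S_\r\x=(y,-x,\k)^T=\vec{\t}(\x)$, which is exactly (\ref{eq:hesc}).

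For the converse, fix $\x$ and set $g(\r):=f(S_\r\x)$. The key observation is that $\r\mapsto S_\r\x$ is an integral curve of $\vec{\t}$: from the group law $S_{\r+s}=S_s\circ S_\r$ (which follows from $S_\r=R_\r(\cdot)+(0,0,\k\r)^T$ together with $R_sR_\r=R_{s+\r}$ and the fact that $R_s$ fixes the $\hat z$-axis) one gets $\tfrac{d}{d\r}S_\r\x=\tfrac{d}{ds}\big|_{s=0}S_s(S_\r\x)=\vec{\t}(S_\r\x)$. Hence, by the chain rule, $g'(\r)=\left(\nabla f(S_\r\x),\,\vec{\t}(S_\r\x)\right)$, which vanishes for every $\r$ by the hypothesis (\ref{eq:hesc}). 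Therefore $g$ is constant, so $f(S_\r\x)=g(\r)=g(0)=f(\x)$ for all $\r$, and $f$ is helical.

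The computations involved are elementary; the only point requiring a moment's care is the identification $\tfrac{d}{d\r}S_\r\x=\vec{\t}(S_\r\x)$ of $\vec{\t}$ with the generator of $G^\k$, and once this is in hand both implications reduce to the observation that a $C^1$ function is constant along a curve if and only if its derivative along that curve is identically zero.
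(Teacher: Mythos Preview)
Your proof is correct and follows essentially the same approach as the paper: both differentiate $f(S_\r\x)=f(\x)$ at $\r=0$ for the forward direction, and for the converse both show that $g(\r)=f(S_\r\x)$ has vanishing derivative by computing $\tfrac{d}{d\r}S_\r\x=\vec{\t}(S_\r\x)$ and applying the hypothesis. Your version frames this last identity via the group law $S_{\r+s}=S_s\circ S_\r$, while the paper simply writes out the components directly, but the argument is the same.
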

\begin{proof}
Assuming that the function $f$ is helical, differentiate the relation
\be
f(S_\r \x)=f(\x)
\ee
with respect to $\r$. The right-hand side is zero, the left-hand side is
\be
\begin{split}
\diff{}{\r}[f(x\cos\r&+y\sin\r,-x\sin\r+y\cos\r,z+\k\r)]\\
&=(-x\sin\r+y\cos\r)\der{f}{x}+(-x\cos\r-y\sin\r)\der{f}{y}+\k\der{f}{z}.
\end{split}
\ee
Now compute at $\r=0$ to get the desired relation.
\par
To prove the converse of the statement, let $\x\in \R^3$, denote $\mathbf{c}=(c_1,c_2,c_3)^T:\R\rightarrow \R^3$ by $\mathbf{c}(\r)=S_\r\x$. Define a function $g:\R\rightarrow \R$ by:
\be
g(\r)=f(\mathbf{c}(\rho)).
\ee
By virtue of (\ref{eq:hesc}), the function $g(\r)$ satisfies the following differential equation:
\be
\diff{g}{\r}=c_2(\r)\der{f}{x}(\mathbf{c}(\r))-c_1(\r)\der{f}{y}(\mathbf{c}(\r))+\k\der{f}{z}(\mathbf{c}(\r))=0,\quad \text{with } g(0)=f(\x).
\ee
Therefore,
\be
g(\r)=g(0),\text{ and consequently, }f(S_\r \x)=f(\x).
\ee
\end{proof}
We will also use the following notation:
\be
\der{f}{\vec{\t}}=(\vec{\t},\nabla f)=y\der{f}{x}-x\der{f}{y}+\k\der{f}{z}.
\ee
With this notation the last claim states that a differentiable function $f$ is helical if and only if $\der{f}{\vec{\t}}=0$.
\begin{defin}[Helical vector field]
\label{df:helv}
The vector field $\mathbf{v}:\R^3\mapsto \R^3$ is helical, if
\be
\label{eq:vesc}
\mathbf{v}(S_\r\x)=R_\r \mathbf{v}(\x), \quad \forall\r\in \R.
\ee
\end{defin}
\begin{claim}
\label{cl:helvec}
A continuously differentiable vector field $\mathbf{v}=(v_x,v_y,v_z)^T:\R^3\rightarrow \R^3$ is helical, if and only if it obeys the following relations:
\bse
\label{eq:helvec2}
\be
\der{v_x}{\vec{\t}}=v_y,
\ee
\be
\der{v_y}{\vec{\t}}=-v_x,
\ee
\be
\der{v_z}{\vec{\t}}=0.
\ee
\ese
\end{claim}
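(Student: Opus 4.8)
The plan is to follow the proof of Claim~\ref{cl:charhelf}, now keeping track of the rotation $R_\r$ from Definition~\ref{df:helv}. Two elementary facts do the work. Let $A=\frac{d}{d\r}R_\r\big|_{\r=0}=\begin{pmatrix}0&1&0\\-1&0&0\\0&0&0\end{pmatrix}$, so that $A\mathbf{p}=(p_2,-p_1,0)^{T}$; since $\{R_\r\}_{\r\in\R}$ is a one-parameter abelian group, $\frac{d}{d\r}R_\r=AR_\r=R_\r A$, and hence $\frac{d}{d\r}R_{-\r}=-AR_{-\r}=-R_{-\r}A$. Next, differentiating $S_\r\x=R_\r\x+(0,0,\k\r)^{T}$ in $\r$ and comparing with the definition \eqref{eq:vect} of $\vec{\t}$, one obtains $\frac{d}{d\r}S_\r\x=\vec{\t}(S_\r\x)$, the tangent field evaluated at the moving point. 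Finally, writing $\frac{\partial\mathbf{v}}{\partial\vec{\t}}$ for the vector whose components are the scalar derivatives $\frac{\partial v_x}{\partial\vec{\t}},\frac{\partial v_y}{\partial\vec{\t}},\frac{\partial v_z}{\partial\vec{\t}}$, the system \eqref{eq:helvec2} is precisely the vector-field identity $\frac{\partial\mathbf{v}}{\partial\vec{\t}}=A\mathbf{v}$.

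For the forward direction I would assume $\mathbf{v}$ helical and differentiate $\mathbf{v}(S_\r\x)=R_\r\mathbf{v}(\x)$ with respect to $\r$. By the chain rule and the identity $\frac{d}{d\r}S_\r\x=\vec{\t}(S_\r\x)$, the left-hand side differentiates, componentwise, to $\frac{\partial\mathbf{v}}{\partial\vec{\t}}(S_\r\x)$, while the right-hand side differentiates to $AR_\r\mathbf{v}(\x)$. Evaluating at $\r=0$ gives $\frac{\partial\mathbf{v}}{\partial\vec{\t}}(\x)=A\mathbf{v}(\x)$, which unpacks exactly to \eqref{eq:helvec2}.

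For the converse I would fix $\x$, set $\mathbf{w}(\r)=R_{-\r}\mathbf{v}(S_\r\x)$, and note $\mathbf{w}(0)=\mathbf{v}(\x)$. Differentiating with the product rule, using $\frac{d}{d\r}R_{-\r}=-AR_{-\r}$, the chain rule together with $\frac{d}{d\r}S_\r\x=\vec{\t}(S_\r\x)$, and the commutation $AR_{-\r}=R_{-\r}A$, yields
\[
\frac{d\mathbf{w}}{d\r}=R_{-\r}\Bigl[\frac{\partial\mathbf{v}}{\partial\vec{\t}}(S_\r\x)-A\mathbf{v}(S_\r\x)\Bigr].
\]
By hypothesis \eqref{eq:helvec2} the bracket vanishes identically, so $\mathbf{w}$ is constant; therefore $\mathbf{w}(\r)=\mathbf{v}(\x)$, i.e. $\mathbf{v}(S_\r\x)=R_\r\mathbf{v}(\x)$ for all $\r\in\R$, which is Definition~\ref{df:helv}.

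All the manipulations here are routine linear algebra and chain-rule computations, so there is no serious obstacle; the only point I would double-check at the outset is the pair of structural facts that make the auxiliary ODE for $\mathbf{w}(\r)$ close, namely that the infinitesimal generator $A$ commutes with every $R_\r$, and that $\frac{d}{d\r}S_\r\x$ is the tangent field $\vec{\t}$ evaluated at $S_\r\x$ rather than at $\x$. Once these are in hand, the displayed expression for $\frac{d\mathbf{w}}{d\r}$ follows and the hypothesis forces $\mathbf{w}'\equiv 0$.
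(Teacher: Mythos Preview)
Your proof is correct and follows essentially the same approach as the paper: differentiate the defining relation $\mathbf{v}(S_\r\x)=R_\r\mathbf{v}(\x)$ at $\r=0$ for the forward direction, and run an ODE argument along the orbit for the converse. Your converse is spelled out more explicitly than the paper's (which simply invokes ODE uniqueness ``in a similar manner'' to the scalar case), but defining $\mathbf{w}(\r)=R_{-\r}\mathbf{v}(S_\r\x)$ and showing $\mathbf{w}'\equiv 0$ is exactly the ODE-uniqueness step made concrete.
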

\begin{proof}
Assume the relation (\ref{eq:vesc}) and differentiate it with respect to $\r$ at $\r=0$.
In the right-hand side, only $R_\r$ depends on $\r$ and we have:
\be
\diff{R_\r}{\r}\Bigl|_{\r=0}=\begin{pmatrix} 0& 1 &0 \\-1 & 0 & 0 \\ 0 & 0 &0  \end{pmatrix}.
\ee
In the left-hand side, by differentiating we obtain a componentwise differentiation along $\vec{\t}$. Comparing the two expression, we get the required equality. To prove the converse, we apply the existence and uniqueness theorem for a system of ordinary differential equations in a similar manner to the proof in the case of the scalar helical function in Claim \ref{cl:charhelf}.
\end{proof}
We will introduce the following notation
\be
\label{eq:rmat}
\Rm=\diff{R_\r}{\r}\Bigl|_{\r=0}=\begin{pmatrix} 0& 1 &0 \\-1 & 0 & 0 \\ 0 & 0 &0  \end{pmatrix}.
\ee
With this notation, the previous claim can be stated in the following manner: a continuously differentiable vector field $\mathbf{v}$ is helical, if and only if $\mathbf{v}$ obeys the following relation
\be
\label{eq:helvchar}
\der{\mathbf{v}}{\vec{\t}}= \Rm\mathbf{v}.
\ee
\par
Let us write the relation that helical vector fields obey, i.e. (\ref{eq:helvec2}) in a more explicit manner:
\bse
\label{eq:vhel}
\begin{align}
\label{eq:uxhel}
y\der{v_x}{x}-x\der{v_x}{y}+\k\der{v_x}{z}&=v_y.\\
\label{eq:uyhel}
y\der{v_y}{x}-x\der{v_y}{y}+\k\der{v_y}{z}&=-v_x.\\
\label{eq:uzhel}
y\der{v_z}{x}-x\der{v_z}{y}+\k\der{v_z}{z}&=0.
\end{align}
\ese
\begin{defin} Let $\u,p$ be a solution of Euler equations then we call $\u,p$ a helical solution, if $\u$ is a helical vector field and $p$ is a helical scalar function.
\end{defin}
\subsection{The orthogonality condition}
We will introduce the following notation, for a vector field $\ve=(v_x,v_y,v_z)^T$, we will define a function $v_{\vec{\t}}$ by:
\be
v_{\vec{\t}}=(\mathbf{v},\vec{\t})=yv_x-xv_y+\k v_z.
\ee
\begin{lemma} Let $\u,p$ be a smooth helical solution of the Euler equations (\ref{eq:Ee}), then:
\label{lm:orthodyn}
\be
\frac{Du_{\vec{\t}}}{Dt}=F_{\vec{\t}},
\ee
where $\frac{D}{Dt}$ is the material time derivative, defined by $\frac{Df}{Dt}=\der{f}{t}+u_x\der{f}{x}+u_y\der{f}{y}+u_z\der{f}{z}$.
\end{lemma}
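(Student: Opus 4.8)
The plan is to differentiate $u_{\vec{\t}}=(\u,\vec{\t})$ along the flow directly, using that $\vec{\t}=y\hat{x}-x\hat{y}+\k\hat{z}$ is a fixed, time-independent vector field and that the momentum equation (\ref{eq:Eedyn}) expresses $\frac{D\u}{Dt}$ through $\nabla p$ and $\F$. Since the material derivative $\frac{D}{Dt}$ is a first-order differential operator, it obeys the Leibniz rule on the pointwise inner product, so I would first write
\[
\frac{Du_{\vec{\t}}}{Dt}=\left(\frac{D\u}{Dt},\vec{\t}\right)+\left(\u,\frac{D\vec{\t}}{Dt}\right).
\]

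For the first term I would insert (\ref{eq:Eedyn}), obtaining $\left(\frac{D\u}{Dt},\vec{\t}\right)=-(\nabla p,\vec{\t})+(\F,\vec{\t})=-\der{p}{\vec{\t}}+F_{\vec{\t}}$. Because $\u,p$ is a helical solution, the pressure $p$ is a helical scalar function, so Claim \ref{cl:charhelf} gives $\der{p}{\vec{\t}}=(\vec{\t},\nabla p)=0$, and the first term reduces to $F_{\vec{\t}}$.

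For the second term, since $\vec{\t}$ does not depend on $t$ we have $\frac{D\vec{\t}}{Dt}=(\u\cdot\nabla)\vec{\t}$, and a one-line componentwise computation gives $(\u\cdot\nabla)\vec{\t}=(u_y,-u_x,0)^T=\Rm\u$, with $\Rm$ as in (\ref{eq:rmat}). As $\Rm$ is antisymmetric, $(\u,\Rm\u)$ equals minus its own transpose, hence vanishes identically, so the second term is zero. Adding the two contributions yields $\frac{Du_{\vec{\t}}}{Dt}=F_{\vec{\t}}$.

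There is no real obstacle here; the computation is short. The only points worth flagging are: (i) it is the helicity of the \emph{pressure}, not of the velocity, that is actually used to annihilate $\der{p}{\vec{\t}}$, so the hypothesis enters only through the definition of a helical solution; and (ii) the cancellation $(\u,(\u\cdot\nabla)\vec{\t})=0$ — that is, $(\u\cdot\nabla)\vec{\t}=\Rm\u\perp\u$ — is precisely the algebraic identity that will later be responsible for the absence of vorticity stretching, so I would display it explicitly rather than leave it implicit.
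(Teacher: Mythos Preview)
Your proof is correct and is essentially the paper's argument: both dot the momentum equation with $\vec{\t}$, use the helicity of $p$ to kill $\der{p}{\vec{\t}}$, and rely on the same cancellation in the advective term. The only difference is packaging --- the paper expands $y\cdot(\ref{eq:Ex})-x\cdot(\ref{eq:Ey})+\k\cdot(\ref{eq:Ez})$ by hand and observes $u_xu_y-u_xu_y=0$, whereas you obtain that cancellation as $(\u,\Rm\u)=0$ via the Leibniz rule and the antisymmetry of $\Rm$, which is cleaner and, as you note, anticipates the role of $\Rm$ in Lemma~\ref{lm:odyn}.
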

\begin{proof}
The full form of the three-dimensional Euler equations is:
\bse
\label{eq:E}
\be
\label{eq:Ex}
\der{u_x}{t}+u_x\der{u_x}{x}+u_y\der{u_x}{y}+u_z\der{u_x}{z}=-\der{p}{x}+F_x,
\ee
\be
\label{eq:Ey}
\der{u_y}{t}+u_x\der{u_y}{x}+u_y\der{u_y}{y}+u_z\der{u_y}{z}=-\der{p}{y}+F_y,
\ee
\be
\label{eq:Ez}
\der{u_z}{t}+u_x\der{u_z}{x}+u_y\der{u_z}{y}+u_z\der{u_z}{z}=-\der{p}{z}+F_z.
\ee
\ese
We take $x$ times equation (\ref{eq:Ex}) $-y$ times equation (\ref{eq:Ey}) and $\k$ times equation (\ref{eq:Ez}). The right-hand side of ({\ref{eq:E}) gives
\be
-\der{p}{\vec{\t}}+F_{\vec{\t}}=F_{\vec{\t}},
\ee
where we used the fact that $p$ is a helical function.
From the left-hand side of (\ref{eq:E}) we get:
\be
\begin{split}
\der{u_{\vec{\t}}}{t}&+y\left[u_x\der{u_x}{x}+u_y\der{u_x}{y}+u_z\der{u_x}{z}\right]-x\left[u_x\der{u_y}{x}+u_y\der{u_y}{y}+u_z\der{u_y}{z}\right]\\
&+\k\left[u_x\der{u_z}{x}+u_y\der{u_z}{y}+u_z\der{u_z}{z}\right]
\\
&=\der{u_{\vec{\t}}}{t}+u_x\der{u_{\vec{\t}}}{x}+u_xu_y+u_y\der{u_{\vec{\t}}}{y}-u_xu_y+u_z\der{u_{\vec{\t}}}{z}=\frac{Du_{\vec{\t}}}{Dt}.
\end{split}
\ee
By equating the two sides we conclude the lemma.
}
\end{proof}
\begin{corr}
\label{cor:uksi}
Suppose $\u_0$, $\mathbf{F}$ are helical vector fields that give rise to a helical smooth solution $\u,p$ of 3D Euler equations. If $F_{\vec{\t}}=0$ and $u_{0,{\vec{\t}}}=0$ then $u_{\vec{\t}}(t)=0,\forall t$.
\end{corr}
\begin{defin} We will say that a vector field $\mathbf{v}$ is orthogonal to the helices, if
\label{df:orthoh}
\be
\label{eq:ortho}
v_{\vec{\t}}=yv_x-xv_y+\k v_z=0.
\ee
\end{defin}
\begin{defin}[Orthogonality Assumption]
\label{df:ortho}
Throughout this article we will assume that the forcing field $\mathbf{F}$  and the velocity field of the solution of the Euler equations $\u$ are orthogonal to the helices in the sense of Definition \ref{df:orthoh}. Namely,
\be
F_{\vec{\t}}=0,
\ee
\be
u_{\vec{\t}}=yu_x-xu_y+\k u_z=0,
\ee
for interval of time for which the solution exists.
\end{defin}
\begin{rema}
By Lemma \ref{lm:orthodyn}, it is enough to assume that $F_{\vec{\t}}=0$ for all times and $u_{0,\vec{\t}}=(\vec{\t},\u_0)=0$ in order to satisfy the previous definition.
\end{rema}
The following two lemmas are the direct consequences of the Orthogonality Assumption (Definition \ref{df:ortho}) about the velocity field on it's vorticity $\mathbf{\O}=\nabla\wedge\u$.
\begin{lemma}
\label{lm:repo}
Let $\u$ be a $C^2$ helical vector field, such that $u_{\vec{\t}}=0$. Denote $\mathbf{\O}=\nabla\wedge\u=(\O_x,\O_y,\O_z)^T$, the vorticity of $\u$ then
\be
\label{eq:repo}
\mathbf{\O}=\frac{\o}{\k}\vec{\t}=(y\o,-x\o,\k\o)^T,
\ee
for a helical scalar function $\o=\O_z=\der{u_y}{x}-\der{u_x}{y}$.
\end{lemma}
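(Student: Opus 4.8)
The plan is to derive the two scalar identities $\k\O_x = y\O_z$ and $\k\O_y = -x\O_z$ by straightforward differentiation, using exactly two pieces of structure: the first-order relations (\ref{eq:vhel}) satisfied by a helical vector field, and the orthogonality constraint (\ref{eq:ortho}) differentiated once. So first I would record, for the helical field $\u$, the relations $\der{u_x}{\vec{\t}} = u_y$, $\der{u_y}{\vec{\t}} = -u_x$, $\der{u_z}{\vec{\t}} = 0$ from (\ref{eq:uxhel})--(\ref{eq:uzhel}), and, from $yu_x - xu_y + \k u_z = 0$, its $x$- and $y$-derivatives
\[
y\partial_x u_x - u_y - x\partial_x u_y + \k\partial_x u_z = 0, \qquad u_x + y\partial_y u_x - x\partial_y u_y + \k\partial_y u_z = 0 .
\]
(The $z$-derivative of the constraint and the relation (\ref{eq:uzhel}) are available too but will not be needed here, and incompressibility is not used at all.)

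Second, to show $\mathbf{\O}\parallel\vec{\t}$, I would compute the combination $\k\O_x - y\O_z = \k(\partial_y u_z - \partial_z u_y) - y(\partial_x u_y - \partial_y u_x)$: use (\ref{eq:uyhel}) to replace $\k\partial_z u_y$ by $-u_x - y\partial_x u_y + x\partial_y u_y$, and then the $y$-derivative of the constraint to replace $\k\partial_y u_z$ by $-u_x - y\partial_y u_x + x\partial_y u_y$; every term then cancels, so $\k\O_x = y\O_z$. The mirror-image computation, eliminating $\k\partial_z u_x$ via (\ref{eq:uxhel}) and $\k\partial_x u_z$ via the $x$-derivative of the constraint, gives $\k\O_y = -x\O_z$. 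Setting $\o := \O_z = \partial_x u_y - \partial_y u_x$ then yields $\mathbf{\O} = \frac{\o}{\k}(y,-x,\k)^T = \frac{\o}{\k}\vec{\t}$, which is exactly (\ref{eq:repo}).

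Third, it remains to see that $\o$ is a helical scalar function, i.e. $\der{\o}{\vec{\t}} = 0$ in the sense of Claim \ref{cl:charhelf}. Since $\u\in C^2$ the mixed second derivatives commute, which yields the commutator identities $\der{\partial_x f}{\vec{\t}} = \partial_x\der{f}{\vec{\t}} + \partial_y f$ and $\der{\partial_y f}{\vec{\t}} = \partial_y\der{f}{\vec{\t}} - \partial_x f$. Applying these to $\o = \partial_x u_y - \partial_y u_x$ and substituting (\ref{eq:uyhel}) and (\ref{eq:uxhel}) for $\der{u_y}{\vec{\t}}$ and $\der{u_x}{\vec{\t}}$ gives $\der{\o}{\vec{\t}} = \partial_x(-u_x) + \partial_y u_y - \partial_y(u_y) + \partial_x u_x = 0$. (Alternatively: the curl commutes with the action of $G^\k$ on vector fields, so $\mathbf{\O}$ is itself a helical vector field, and its third component is then helical by Claim \ref{cl:helvec}.)

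There is no real obstacle; the statement is elementary. The only points to be careful about are that both hypotheses must genuinely be combined --- helicity alone does not give $\mathbf{\O}\parallel\vec{\t}$, it is the differentiated orthogonality constraint that supplies the needed relations involving $\partial_x u_z$ and $\partial_y u_z$ --- and that the $C^2$ regularity is precisely what legitimizes commuting the second derivatives in the last step.
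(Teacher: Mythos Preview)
Your proof is correct and follows essentially the same route as the paper: both arguments eliminate $\partial_z u_x,\ \partial_z u_y$ via the helical relations (\ref{eq:uxhel})--(\ref{eq:uyhel}) and eliminate $u_z$ (equivalently, $\partial_x u_z,\ \partial_y u_z$) via the orthogonality constraint, and then the components of $\mathbf{\Omega}$ collapse to $\frac{y}{\k}\o,\ -\frac{x}{\k}\o,\ \o$. The only substantive difference is in how you verify that $\o$ is helical: you compute $\der{\o}{\vec{\t}}$ directly from the commutator identities for $\partial_x,\partial_y$ with $\der{}{\vec{\t}}$, whereas the paper observes that once the representation $\mathbf{\Omega}=\frac{\o}{\k}\vec{\t}$ is established, the universal identity $\nabla\cdot(\nabla\wedge\u)=0$ reads exactly $y\partial_x\o - x\partial_y\o + \k\partial_z\o = 0$. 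The paper's route is a one-line trick that avoids any further differentiation of the helical relations; your route is more hands-on but equally valid, and your parenthetical alternative (curl commutes with the $G^\k$-action, so $\O_z$ is helical by Claim~\ref{cl:helvec}) is perhaps the cleanest of the three.
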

\begin{proof}
Observe that if representation (\ref{eq:repo}) holds true then since $\mathbf{\O}$ has zero divergence, we have
\be
y\der{\o}{x}-x\der{\o}{y}+\k\der{\o}{z}=\der{\O_x}{x}+\der{\O_y}{y}+\der{\O_z}{z}=\nabla\cdot\OO=0.
\ee
Therefore by Claim \ref{cl:charhelf}, $\o$ will be a helical function. We compute the components of $\mathbf{\O}$:
Observe that equation (\ref{eq:ortho}) can be rewritten as:
\be
\label{eq:eluz}
u_z=\frac{1}{\k}(-yu_x+xu_y).
\ee
Also, by using equations (\ref{eq:uxhel}) and (\ref{eq:uyhel}), we can conclude the following equalities:
\be
\label{eq:eluxz}
\der{u_x}{z}=\frac{1}{\k}(-y\der{u_x}{x}+x\der{u_x}{y}-u_y),
\ee
\be
\label{eq:eluyz}
\der{u_y}{z}=\frac{1}{\k}(-y\der{u_y}{x}+x\der{u_y}{y}+u_x).
\ee
Therefore, we use (\ref{eq:eluz}) and (\ref{eq:eluxz}) to obtain:
\be
\begin{split}
\O_x&=\der{u_z}{y}-\der{u_y}{z}=\frac{1}{\k}\der{}{y}\left[-yu_x+xu_y\right]-\frac{1}{\k}\left[u_x+y\der{u_y}{x}+x\der{u_y}{y}\right]\\
&=\frac{1}{\k}\left[-u_x-y\der{u_x}{y}+x\der{u_y}{y}+u_x+y\der{u_y}{x}-x\der{u_y}{y}\right]=\frac{y}{\k}\left[\der{u_y}{x}-\der{u_x}{y}\right],
\end{split}
\ee
and we use (\ref{eq:eluz}) and (\ref{eq:eluyz}) to obtain:
\be
\begin{split}
\O_y&=\der{u_x}{z}-\der{u_z}{x}=\frac{1}{\k}\left[u_y-y\der{u_x}{x}+x\der{u_x}{y}\right]-\frac{1}{\k}\der{}{x}\left[-yu_x+xu_y\right]\\
&=\frac{1}{\k}\left[u_y-y\der{u_x}{x}+x\der{u_x}{y}+y\der{u_x}{x}-u_y-x\der{u_y}{x} \right]
=\frac{-x}{\k}\left[\der{u_y}{x}-\der{u_x}{y}\right],
\end{split}
\ee
\be
\O_z=\der{u_y}{x}-\der{u_x}{y}.
\ee
We see from the calculations above that
\be
\mathbf{\O}=\vec{\t}\frac{1}{\k}\left[\der{u_y}{x}-\der{u_x}{y}\right],
\ee
proving that $\mathbf{\O}$ has the representation (\ref{eq:repo}) with $\o=\der{u_y}{x}-\der{u_x}{y}=\O_z$.
\end{proof}
\begin{rema}
Observe that the previous lemma also proves that if $\F$ is a body forcing term, which obeys orthogonality condition, then the vorticity forcing term $\nabla\wedge\F$ obeys
\be
    \nabla\wedge\F=\frac{\vec{\t}}{\k}(\nabla\wedge\F)_z.
\ee
\end{rema}
\begin{lemma}
The orthogonality condition has also direct implications to the dynamics of the vorticity.
\label{lm:odyn}
Let $\OO=\nabla\wedge\u$ be the vorticity field of a smooth helical solution of the Euler equations which obey the Orthogonality Assumption (Definition \ref{df:ortho}), then $\OO$ obeys the following equation
\be
\frac{D\OO}{Dt}+\frac{1}{\k}\O_z\Rm\u=\nabla\wedge\mathbf{F}.
\ee
The matrix $\Rm$ is defined in equation (\ref{eq:rmat}).
\end{lemma}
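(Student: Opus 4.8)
The plan is to begin from the standard vorticity equation~(\ref{eq:Evort}), which, written with the material derivative, reads
\be
\frac{D\OO}{Dt}+(\OO\cdot\nabla)\u=\nabla\wedge\F,
\ee
and to show that under the Orthogonality Assumption the vorticity stretching term $(\OO\cdot\nabla)\u$ collapses to the purely algebraic expression $\frac{1}{\k}\O_z\Rm\u$. Rearranging that identity gives exactly the stated equation, so the whole content of the lemma is this simplification of the stretching term.

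First I would invoke Lemma~\ref{lm:repo}: since $\u$ is a $C^2$ helical field with $u_{\vec{\t}}=0$ (guaranteed by the Orthogonality Assumption), its vorticity has the representation $\OO=\frac{\o}{\k}\vec{\t}$ with $\o=\O_z$ a helical scalar function. Hence the advective operator generated by $\OO$ factors through the directional derivative along $\vec{\t}$:
\be
(\OO\cdot\nabla)=\frac{\o}{\k}\,(\vec{\t},\nabla)=\frac{\o}{\k}\,\der{}{\vec{\t}},
\ee
so that $(\OO\cdot\nabla)\u=\frac{\o}{\k}\der{\u}{\vec{\t}}$, the derivative being taken componentwise.

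Next I would use that $\u$ is itself a helical vector field. By Claim~\ref{cl:helvec}, equivalently by relation~(\ref{eq:helvchar}), helicality of $\u$ is precisely the statement $\der{\u}{\vec{\t}}=\Rm\u$. Substituting this into the previous display yields
\be
(\OO\cdot\nabla)\u=\frac{\o}{\k}\Rm\u=\frac{1}{\k}\O_z\Rm\u,
\ee
and moving this term to the left-hand side of the vorticity equation gives the claimed identity.

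The calculation is essentially immediate once these two facts are combined, so there is no serious obstacle; the only points needing a little care are the bookkeeping about how $\frac{D}{Dt}$ and $(\OO\cdot\nabla)$ act on a vector field (componentwise) and the verification that the smooth helical solution does satisfy the hypotheses of Lemma~\ref{lm:repo}, which it does by the Orthogonality Assumption. As a sanity check one could also verify the identity by brute force, writing out the three components of $(\OO\cdot\nabla)\u$ from $\OO=(y\o,-x\o,\k\o)^T$ together with the helical relations~(\ref{eq:vhel}) and matching them against $\frac{\o}{\k}\Rm\u=(\frac{\o}{\k}u_y,\,-\frac{\o}{\k}u_x,\,0)^T$; but the structural argument above is cleaner and is the one I would present.
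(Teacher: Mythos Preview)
Your proof is correct and follows essentially the same route as the paper: start from the vorticity equation~(\ref{eq:Evort}), use Lemma~\ref{lm:repo} to write $\OO=\frac{\O_z}{\k}\vec{\t}$ so that $(\OO\cdot\nabla)\u=\frac{\O_z}{\k}\der{\u}{\vec{\t}}$, and then apply the helical characterization~(\ref{eq:helvchar}) to replace $\der{\u}{\vec{\t}}$ by $\Rm\u$. The only cosmetic difference is that the paper writes out the components $\O_x=\frac{y}{\k}\O_z$, $\O_y=-\frac{x}{\k}\O_z$ explicitly before regrouping, while you use the compact form $\OO=\frac{\o}{\k}\vec{\t}$ directly.
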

\begin{proof}
From (\ref{eq:Evort}), the vorticity $\O$ satisfies
\be
\der{\OO}{t}+(\u\cdot\nabla)\OO+(\OO\cdot\nabla)\u=\nabla\wedge\mathbf{F}.
\ee
By Lemma \ref{lm:repo} , we have
\begin{align}
\O_x&=\frac{y}{\k} \O_z.\\
\O_y&=-\frac{x}{\k} \O_z.
\end{align}
Therefore,
\be
\begin{split}
(\OO\cdot\nabla)\u&=\O_x\der{\u}{x}+\O_y\der{\u}{y}+\O_z\der{\u}{z}
\\
&=\frac{1}{\k}\left(y\O_z\der{\u}{x}-x\O_z\der{\u}{y}+\k\O_z\der{\u}{z}\right)=\frac{1}{\k}\O_z\der{\u}{\vec{\t}}=\frac{1}{\k}\O_z\Rm\u,
\end{split}
\ee
where we used the fact that $\u$ is a helical vector field and Claim \ref{cl:helvec}.
\end{proof}
\begin{corr}
\label{cr:transo}
The helical scalar function $\o=\O_z$, where $\OO=\nabla\wedge\u$, the vorticity field of a smooth solution of the Euler equations, obeys the equation
\be
\label{eq:transo}
\frac{D\o}{Dt}=\der{F_y}{x}-\der{F_x}{y}.
\ee
\end{corr}
\begin{rema}
Observe from (\ref{eq:repo}) that
\be
\o={\O_z}=|\mathbf{\O}|\frac{\k}{\sqrt{\k^2+x^2+y^2}}.
\ee
Therefore, the equivalent of the previous two lemmas in the axi-symmetric setting would be the fact that if the azimuthal component of the velocity is zero -  $u_{\theta}=0$ then $\mathbf{\O}=\o_\theta \hat{\theta}$ and $\frac{D}{Dt}\left(\frac{\o_\theta}{r}\right)=(\nabla\wedge\mathbf{F})_{\theta}$.
\end{rema}
\subsection{Reduction to the two-dimensional equations}
\label{sbsc:red}
The purpose of this subsection is to setup the reduction of the three-dimensional problem to the problem with two components of velocity fields on a two dimensional domain. To achieve this goal,
we rewrite the orthogonality condition (equation (\ref{eq:ortho})) in the following form:
\be
\label{eq:elvz}
u_z=\frac{1}{\k}(-yu_x+xu_y).
\ee
In the equations (\ref{eq:vhel}), which characterize the helical vector field, we move differentiation by $z$ to the left-hand side:
\bse
\label{eq:eldz}
\begin{align}
\k\der{u_x}{z}&=u_y-y\der{u_x}{x}+x\der{u_x}{y},\\
\k\der{u_y}{z}&=-u_x-y\der{u_y}{x}+x\der{u_y}{y},\\
\k\der{u_z}{z}&=-y\der{u_z}{x}+x\der{u_z}{y}.
\end{align}
\ese
From equations (\ref{eq:elvz}) and (\ref{eq:eldz}) it is clear that we can eliminate the $u_z$ component from the velocity field and also all the differentiation by $z$. It means that we can rewrite our problem, as a problem for a two-dimensional vector field $\u_{\text{red}}=(u_x,u_y)$, with the fluid dynamics occuring in a two-dimensional domain of a form $\D_{\r_0}=\D\cap\{z=\r_0\}$. We will make an arbitrary choice of $\r_0=0$. We define a subset of $\R^2$ by
\be
\D_0=\{(x,y)^T|(x,y,0)^T\in \D \}.
\ee
We will solve the new equations on $\D_0$ and then we can recover the full vector field on $\D$ by the following algorithm:
\begin{description}
\item[Input]
\be
u_x(x,y,0;t),u_y(x,y,0;t)\quad \text{on }\D_0\times\{0\}.
\ee
\item[Step One] Recover the $u_z$ component on $\D_0\times\{0\}$ by:
\be
u_z(x,y,0;t)=\frac{1}{\k}(-yu_x(x,y,0;t)+xu_y(x,y,0;t)).
\ee
\item[Step Two] Recover the velocity field on $\D$ from the velocity field on $\D_0$ by:
\be
\label{eq:recvec}
\u(x,y,z;t)=R_{\frac{z}{\k}}\u(S_{-\frac{z}{\k}}\x;t),
\ee
where $\x=(x,y,z)^T$.
\end{description}
Observe that in a similar manner, we eliminate differentiation by $z$ from any helical function $f$ by using equation (\ref{eq:hesc}):
\be
\k\der{f}{z}=-y\der{f}{x}+x\der{f}{y}.
\ee
\begin{defin}
\label{df:fnext}
For a function $\phi:\D_0\rightarrow \R$ denote by $\tilde{\phi}$ the helical function, which extends $\phi$ to $\D$:
\be
\tilde{\phi}(x,y,z)=\phi(S_{-\frac{z}{\k}}(x,y,z)^T).
\ee
For a helical function $\psi:\D\rightarrow \R$ denote it's contraction to $\D_0$ by $\bar{\psi}$.
\be
\bar{\psi}(x,y)=\psi(x,y,0).
\ee
\end{defin}

\subsection{Effective vorticity-stream function formulation for helical solutions of the Euler Equations}
We will now introduce the effective stream function formulation on the domain $\D_0$. We rewrite the incompressibility condition:
\be
\begin{split}
\der{u_x}{x}&+\der{u_y}{y}+\der{u_z}{z}=\der{u_x}{x}+\der{u_y}{y}-\frac{y}{\k}\der{u_z}{x}+\frac{x}{\k}\der{u_z}{y}\\&=\der{u_x}{x}+\der{u_y}{y}-\frac{y}{\k^2}\der{}{x}(-yu_x+xu_y)+\frac{x}{\k^2}\der{}{y}(-yu_x+xu_y)
\\
&=\frac{1}{\k^2}\der{}{x}[(\k^2+y^2)u_x-xyu_y]+\frac{1}{\k^2}\der{}{y}[(\k^2+x^2)u_y-xyu_x].
\end{split}
\ee
We used equation (\ref{eq:uzhel}) in the first equality and then equation (\ref{eq:elvz}) in the second equality. Therefore the incompressibility for a reduced velocity field is:
\be
\frac{1}{\k^2}\der{}{x}[(\k^2+y^2)u_x-xyu_y]+\frac{1}{\k^2}\der{}{y}[(\k^2+x^2)u_y-xyu_x]=0
\ee
We introduce the function $\psi:\D_0\rightarrow \R$ by
\bse
\label{eq:dfpsi}
\be
\der{\psi}{y}=-\frac{1}{\k^2}\left[(\k^2+y^2)u_x-xyu_y\right],
\ee
\be
\der{\psi}{x}=\frac{1}{\k^2}\left[-xyu_x+(\k^2+x^2)u_y\right].
\ee
\ese
Let us rewrite the relation between $\psi$ and $u_x,u_y$:
\be
\begin{pmatrix}\der{\psi}{x}\\ \der{\psi}{y}\end{pmatrix}=\frac{1}{\k^2}\begin{pmatrix} -xy & \k^2+x^2 \\ -\k^2-y^2 & xy  \end{pmatrix}\begin{pmatrix}u_x \\ u_y\end{pmatrix}.
\ee
Now, we can invert the previous equation to obtain:
\be
\label{eq:upsi}
\begin{pmatrix}u_x \\ u_y\end{pmatrix}=\frac{1}{\k^2+x^2+y^2}\begin{pmatrix} xy & -\k^2-x^2 \\ \k^2+y^2 & -xy  \end{pmatrix}\begin{pmatrix}\der{\psi}{x}\\ \der{\psi}{y}\end{pmatrix}.
\ee
\begin{rema}
In fact, the function $\psi$ has an intrinsic meaning to the full three-dimensional problem. Consider the three-dimensional stream function of the problem, defined by $\nabla\wedge\PS=\u$. Unlike the planar and axi-symmetric problem, it is no longer true that $\PS=\vec{\t}\tilde{\psi}$, where $\tilde{\psi}$ is the helical extension of $\psi$ in the sense of Definition \ref{df:fnext}. But $\tilde{\psi}$ can be recovered as a component of $\PS$ in a certain non-orthogonal decomposition. We leave this point out, since we concentrate on the analytical aspects of our problem.
\end{rema}
We wish to establish the boundary conditions for $\psi$ on $\partial\D_0$. We will employ the boundary condition for $\u$, namely that $(\u(\x), \mathbf{n}(\x))=0$ for $\x\in\partial \D$, where $\mathbf{n}(x)$ is a normal to $\partial\D$ at $\x$. Observe that while in the two-dimensional case, the fact that  the velocity field $\u$ is orthogonal to the normal to the boundary automatically implies that it is proportional to the tangent to the boundary, it is no longer true in the three-dimensional case. But we will exploit the helical symmetry and the orthogonality condition to compute the tangent vector to $\partial \D_0$.
\begin{lemma}
\label{lm:tng}
Let $\D$ be a helical simply connected domain with twice differentiable boundary and let $\D_0=\{(x,y)^T|(x,y,0)^T\in\D\}$. Let $(x,y)^T\in\partial\D_0$ and $\mathbf{t}=(t_x,t_y)^T$ be a tangent vector to $\partial \D_0$ (with respect to $\R^2$) then $\mathbf{t}$ at the point $(x,y)^T$ is proportional to
\[
\mathbf{v}=(u_x(\k^2+y^2)-xyu_y,u_y(\k^2+x^2)-xyu_x)^T,
\]
provided that the vector $\mathbf{v}$ is not zero.
\end{lemma}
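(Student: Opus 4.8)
The plan is to pin down the tangent plane to the three-dimensional boundary $\partial\D$ at the lifted point $\x_0=(x,y,0)^T$, and then to combine the no-normal-flow condition $(\u,\mathbf{n})=0$ with the orthogonality condition (\ref{eq:elvz}) into a single scalar identity relating $(t_x,t_y)$ to $(u_x,u_y)$.

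First I would use the helical invariance of $\D$. Since $S_\r\D=\D$ for every $\r$, also $S_\r(\partial\D)=\partial\D$, so for each $\x\in\partial\D$ the orbit $\r\mapsto S_\r\x$ lies in $\partial\D$; its velocity at $\r=0$ is therefore a tangent vector to $\partial\D$, and at $\x=\x_0$ that velocity is $\Rm\x_0+\k\hat z=(y,-x,\k)^T$, which is the vector $\vec{\t}$ of (\ref{eq:vect}). On the other hand, if $(x,y)\in\partial\D_0$ then $\x_0$ is a limit of points of $\D$ while $\x_0\notin\D$, hence $\x_0\in\partial\D$; thus the planar curve $\partial\D_0\times\{0\}$ lies inside $\partial\D$, and $(t_x,t_y,0)^T$ is also tangent to $\partial\D$ at $\x_0$. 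Because $\k\neq0$, the vectors $\vec{\t}$ and $(t_x,t_y,0)^T$ are linearly independent, and since $\partial\D$ is a $C^2$ surface they span the full tangent plane $T_{\x_0}\partial\D$. Consequently the normal satisfies
\be
\mathbf{n}\ \parallel\ \vec{\t}\times(t_x,t_y,0)^T=(-\k t_y,\ \k t_x,\ xt_x+yt_y)^T .
\ee
(Along the way one notes, for a $C^2$ defining function $\Phi$ of $\D$, that $\nabla\Phi\cdot\vec{\t}=0$, so $\nabla\Phi$ cannot point along $\hat z$ at $\x_0$; this is what makes $\partial\D_0$ a genuine $C^2$ curve near $(x,y)$, so its tangent line is well defined.)

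Next I would impose $(\u,\mathbf{n})=0$ at $\x_0$, namely $-\k u_xt_y+\k u_yt_x+u_z(xt_x+yt_y)=0$, and substitute $u_z=\frac1\k(xu_y-yu_x)$ from (\ref{eq:elvz}). Clearing the factor $\frac1\k$ and expanding the product $(xu_y-yu_x)(xt_x+yt_y)$, the terms recombine exactly into
\be
\bigl[(\k^2+x^2)u_y-xyu_x\bigr]t_x-\bigl[(\k^2+y^2)u_x-xyu_y\bigr]t_y=0 ,
\ee
that is, $v_2t_x-v_1t_y=0$, where $\ve=(v_1,v_2)^T$ is the vector $\mathbf{v}$ of the statement. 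Provided $\ve\neq0$, this is precisely the assertion that $(t_x,t_y)^T$ is proportional to $\ve$, which proves the lemma. (Comparing $\ve$ with the defining relations (\ref{eq:dfpsi}) for $\psi$ gives $\ve=\k^2\bigl(-\der{\psi}{y},\der{\psi}{x}\bigr)^T$, so the lemma is equivalent to saying that $\nabla\psi$ is normal to $\partial\D_0$; this is the form in which it will be used to read off the boundary condition for $\psi$.)

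The step I expect to be the real obstacle is the geometric one: justifying cleanly that $T_{\x_0}\partial\D$ is spanned by the orbit tangent $\vec{\t}$ together with the in-plane tangent $(t_x,t_y,0)^T$, and in particular excluding the degenerate case in which $\partial\D$ is tangent to $\{z=0\}$ at $\x_0$ (ruled out, as noted, precisely by $\k\neq0$ through $\nabla\Phi\cdot\vec{\t}=0$). Once the tangent plane is fixed, the remainder is just the short linear-algebra manipulation above, driven by the substitution $u_z=\frac1\k(xu_y-yu_x)$.
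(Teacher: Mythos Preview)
Your proof is correct and follows essentially the same route as the paper: both use that $\vec{\t}$ is tangent to $\partial\D$ to constrain the normal, then combine $(\u,\mathbf{n})=0$ with the orthogonality relation $u_z=\frac{1}{\k}(xu_y-yu_x)$ to obtain the scalar identity linking $(t_x,t_y)$ and $(u_x,u_y)$. The only cosmetic difference is that you compute $\mathbf{n}$ directly as the cross product $\vec{\t}\times(t_x,t_y,0)^T$, whereas the paper parameterizes $\mathbf{n}$ by $(n_x,n_y)$, solves $(\u,\mathbf{n})=0$ for $(n_x,n_y)$ via the matrix $G^{-1}$, and only then passes to the tangent; your organization is slightly more direct but the content is the same.
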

\begin{corr}[Boundary conditions for $\psi$]
\label{cr:psibc}
Let $\D_0\in \R^2$ be a simply connected domain with twice-differentiable boundary, then one can choose $\psi$ such that
\be
\psi|_{\partial\D_0}=0.
\ee
\end{corr}
\begin{proof}[Proof of Corrolary \ref{cr:psibc}]
By Lemma \ref{lm:tng}, we have on the boundary of $\D_0$
\be
\begin{split}
\der{\psi}{\mathbf{t}}&=\der{\psi}{x}t_x+\der{\psi}{y}t_y
\\&=c(-[(\k^2+y^2)u_x-xyu_y][u_x(\k^2+y^2)-xyu_y]
\\&+[u_x(\k^2+y^2)-xyu_y][u_y(\k^2+x^2)-xyu_x])=0,
\end{split}
\ee
where we used Equations \ref{eq:dfpsi}, which defined $\psi$. The constant $c$ is the proportionality constant, which depends on the point $(x,y)^T$. If the conditions of the lemma do not hold, namely $(u_x(\k^2+y^2)-xyu_y,u_y(\k^2+x^2)-xyu_x)=0$ then $\nabla\psi(x,y)=0$. Therefore, $\psi$ is constant on the boundary of a simply connected domain at every point $(x,y)^T\in\partial\D_0$. And since $\psi$ is defined only up to a constant, we choose $\psi|_{\partial\D_0}=0$.
\end{proof}
\begin{proof}[Proof of Lemma \ref{lm:tng}]
Let $\mathbf{n}=(n_x,n_y,n_z)$ be a normal to $\partial\D$ at the point $(x,y,0)^T\in\partial\D_0\times{\{0\}}\subseteq \partial\D$. Since $\D$ is a helical domain, then all the points of the form $S_{\r}((x,y,0)^T)$ belong to $\partial\D$. Therefore, the field of tangents to the helices $\vec{\t}$ is tangent to the boundary of $\D$. By definition, $\mathbf{n}$ is normal to all the tangents to $\partial\D$. Thus we conclude that
\be
(\vec{\t},\mathbf{n})=0,
\ee
therefore
\be
 yn_x-xn_y+\k n_z=0.
\ee
This means that $\mathbf{n}$ has a form
\be
\label{eq:nform}
\mathbf{n}=(n_x,n_y, -\frac{y}{\k}n_x+\frac{x}{\k}n_y)^T.
\ee
\\
Also, by the orthogonality assumtpion, the velocity field $\u$ is also orthogonal to the helices and thus $\u$ also has the form
\be
\label{eq:uform}
\u=(u_x,u_y,-\frac{y}{\k}u_x+\frac{x}{\k}u_y)^T.
\ee
But, since $\u$ obey the no-flow boundary condition, then $(\u,\mathbf{n})=0$. We combine equations (\ref{eq:nform}) and (\ref{eq:uform}) with the boundary condition to get
\be
\begin{split}
0&=u_xn_x+u_yn_y+(-\frac{y}{\k}u_x+\frac{x}{\k}u_y)(-\frac{y}{\k}n_x+\frac{x}{\k}n_y)\\
&=\frac{1}{\k^2}\begin{pmatrix} u_x & u_y\end{pmatrix}
\begin{pmatrix}\k^2+y^2 &-xy\\ -xy  & \k^2+x^2 \end{pmatrix}
\begin{pmatrix} n_x \\ n_y\end{pmatrix}=\frac{1}{\k^2}\begin{pmatrix} u_x & u_y\end{pmatrix}G
\begin{pmatrix} n_x \\ n_y\end{pmatrix},
\end{split}
\ee
where $G$ is the matrix
\be
G=\begin{pmatrix}\k^2+y^2 &-xy\\ -xy  & \k^2+x^2 \end{pmatrix}.
\ee
Therefore, we conclude that the vector $\begin{pmatrix} n_x \\ n_y \end{pmatrix}$ is proportional to $G^{-1}\begin{pmatrix} -u_y \\ u_x \end{pmatrix}$. Computing:
\be
\begin{split}
G^{-1}\begin{pmatrix} -u_y \\ u_x \end{pmatrix}&=\begin{pmatrix}\k^2+y^2 &-xy\\ -xy  & \k^2+x^2 \end{pmatrix}^{-1}\begin{pmatrix} -u_y \\ u_x \end{pmatrix}\\&=\frac{1}{\k^2+x^2+y^2}\begin{pmatrix}\k^2+y^2 &-xy\\ -xy  & \k^2+x^2 \end{pmatrix}^{-1}\begin{pmatrix} -u_y \\ u_x \end{pmatrix}
\\&=\frac{1}{\k^2+x^2+y^2}\begin{pmatrix}-\k^2-x^2 &-xy\\ -xy  & -\k^2-y^2 \end{pmatrix}\begin{pmatrix} -u_y \\ u_x \end{pmatrix}\\
&=\frac{1}{\k^2+x^2+y^2}\begin{pmatrix}(\k^2+x^2)u_y-xyu_x\\xyu_y-(\k^2+y^2)u_x   \end{pmatrix}.
\end{split}
\ee
Observe that we stipulated that the vector $G^{-1}\begin{pmatrix} -u_y \\ u_x \end{pmatrix}$ is not zero in the conditions of the lemma. Therefore, the vector $(n_x,n_y)^T$ is proportional to the vector $((\k^2+x^2)u_y-xyu_x,xyu_y-(\k^2+y^2)u_x)^T$. Next, let $\mathbf{t}=(t_x,t_y)$ be vector tangent to $\partial\D_0$ at $(x,y)^T$ then the vector $(t_x,t_y,0)^T$ is tangent to $\D$ at the point $(x,y,0)^T$. This means that $(t_x,t_y,0)^T$ is orthogonal to the normal $\mathbf{n}$. From this we conclude
\be
((t_x,t_y,0)^T,\mathbf{n})=t_xn_x+t_yn_y=0.
\ee
Therefore $(t_x,t_y)$ is proportional to $(-n_y,n_x)$ which means that
\be
(t_x,t_y)=c ((\k^2+y^2)u_x-xyu_y,(\k^2+x^2)u_y-xyu_x),
\ee
for some constant $c$, which depends on $(x,y)^T$. This concludes the proof.
\end{proof}
We now wish to express the connection of the vorticity and the stream function and also the dynamics of the vorticity in the new formulation. By Lemma \ref{lm:repo}, we can concentrate on the dynamics of $\o=\OO_z$. By the same lemma, it is a helical function, which we can restrict to a function on $\D_0$, which we denote also by $\o$. Using equation (\ref{eq:upsi}), we represent $\o$ in terms of $\psi$:
\be
\begin{split}
\o&=\der{u_x}{y}-\der{u_y}{x}\\
&=\begin{pmatrix} \partial_y& -\partial_x \end{pmatrix}\frac{1}{\k^2+x^2+y^2}\begin{pmatrix} -xy & \k^2+x^2 \\ -\k^2-y^2 & xy  \end{pmatrix}\begin{pmatrix}\der{\psi}{x}\\ \der{\psi}{y}\end{pmatrix}\\
&=\begin{pmatrix} \partial_x& \partial_y \end{pmatrix}\begin{pmatrix} 0 & -1\\ 1 & 0\end{pmatrix}\frac{1}{\k^2+x^2+y^2}\begin{pmatrix} -xy & \k^2+x^2 \\ -\k^2-y^2 & xy  \end{pmatrix}\begin{pmatrix}\der{\psi}{x}\\ \der{\psi}{y}\end{pmatrix}\\
&=\begin{pmatrix} \partial_x& \partial_y \end{pmatrix}\frac{1}{\k^2+x^2+y^2}\begin{pmatrix} \k^2+y^2 &- xy   \\  -xy & \k^2+x^2 \end{pmatrix}\begin{pmatrix}\der{\psi}{x}\\ \der{\psi}{y}\end{pmatrix}\\
&=\begin{pmatrix} \partial_x& \partial_y \end{pmatrix}\frac{1}{\k^2+x^2+y^2}\begin{pmatrix} \k^2+y^2 &- xy   \\  -xy & \k^2+x^2 \end{pmatrix}\begin{pmatrix}\partial_x\\ \partial_y\end{pmatrix} \psi.
\end{split}
\ee
Let us introduce a notation for the matrix-valued function, which appears in the previous equation
\be
\label{eq:K}
K(x,y)=\frac{1}{\k^2+x^2+y^2}\begin{pmatrix} \k^2+y^2 &- xy   \\  -xy & \k^2+x^2 \end{pmatrix}.
\ee
We will denote the connection between $\o$ and $\psi$ by an operator $\LH$.
\be
\o=\LH\psi.
\ee
With the notation for $K$, the operator $\LH$ can be written:
\be
\LH\psi=\nabla\cdot (K\nabla\psi)=\text{div}(K\text{grad}\psi).
\ee
In the last step, we will write the equation for $\o$ using the stream function formulation.
\begin{lemma}[Dynamics of the vorticity in the stream function formulation]
The vorticity function $\o$ obeys the following equation:
\be
\label{eq:dynohel}
\der{\o}{t}-\der{\psi}{y}\der{\o}{x}+\der{\psi}{x}\der{\o}{y}=f,
\ee
where $f=\der{F_y}{x}-\der{F_x}{y}$.
\end{lemma}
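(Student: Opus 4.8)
The plan is to obtain \eqref{eq:dynohel} as an immediate consequence of Corollary \ref{cr:transo}, which already furnishes the transport identity $\frac{D\o}{Dt}=\der{F_y}{x}-\der{F_x}{y}=f$ for the helical scalar $\o=\O_z$. The only thing left to do is to rewrite the material derivative $\frac{D\o}{Dt}=\der{\o}{t}+u_x\der{\o}{x}+u_y\der{\o}{y}+u_z\der{\o}{z}$ as a two-dimensional object living on $\D_0$, i.e.\ to eliminate $u_z$ and the $z$-derivative in exactly the same manner as in Subsection \ref{sbsc:red}.

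First I would restrict everything to the slice $z=0$ and invoke the two structural relations that hold there. Since $\o$ is helical, Claim \ref{cl:charhelf} gives $\k\,\partial_z\o=-y\,\partial_x\o+x\,\partial_y\o$; and since $\u$ satisfies the orthogonality assumption, equation \eqref{eq:elvz} gives $u_z=\frac1\k(-yu_x+xu_y)$. Substituting these into $u_z\,\partial_z\o$ turns that term into $\frac{1}{\k^2}(-yu_x+xu_y)(-y\,\partial_x\o+x\,\partial_y\o)$, a purely $(x,y)$-expression.

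Next I would gather the coefficients of $\partial_x\o$ and of $\partial_y\o$ in $\frac{D\o}{Dt}$. A short computation gives them as $\frac{1}{\k^2}\bigl[(\k^2+y^2)u_x-xyu_y\bigr]$ and $\frac{1}{\k^2}\bigl[(\k^2+x^2)u_y-xyu_x\bigr]$ respectively, and comparing with the defining relations \eqref{eq:dfpsi} for the effective stream function these are exactly $-\der{\psi}{y}$ and $\der{\psi}{x}$. Hence $\frac{D\o}{Dt}=\der{\o}{t}-\der{\psi}{y}\der{\o}{x}+\der{\psi}{x}\der{\o}{y}$, and combining with Corollary \ref{cr:transo} yields \eqref{eq:dynohel}.

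I do not expect a real obstacle: this is a one-line-per-step calculation once Corollary \ref{cr:transo} is in hand. The only points requiring a little care are the bookkeeping of domains — $\o$ here denotes both the helical function on $\D$ and its restriction to $\D_0$, and the identities for $\partial_z\o$ and $u_z$ are applied on $\D$ and then evaluated at $z=0$ — and reading the $2\times2$ matrix in \eqref{eq:dfpsi} with the correct orientation so that the signs of $\der{\psi}{x}$ and $\der{\psi}{y}$ come out right.
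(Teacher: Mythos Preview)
Your proposal is correct and follows essentially the same route as the paper: start from Corollary~\ref{cr:transo}, eliminate $u_z$ and $\partial_z\o$ via \eqref{eq:elvz} and the helical relation \eqref{eq:hesc}, and then recognize the resulting coefficients of $\partial_x\o,\partial_y\o$ as $-\partial_y\psi,\partial_x\psi$. The only cosmetic difference is that the paper reads off the final identification by substituting \eqref{eq:upsi} and multiplying the two $2\times2$ matrices, whereas you compare directly against the defining relations \eqref{eq:dfpsi}; these are equivalent.
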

\begin{proof}
Our starting point is Corrolary \ref{cr:transo}:
\be
\label{eq:vortdyn}
\frac{D\o}{Dt}=f,
\ee
We have:
\be
\begin{split}
\frac{D\o}{Dt}&=\der{\o}{t}+u_x\der{\o}{x}+u_y\der{\o}{y}+u_z\der{\o}{z}\\
&=\der{\o}{t}+u_x\der{\o}{x}+u_y\der{\o}{y}+\frac{1}{\k^2}(-yu_x+xu_y)(-y\der{\o}{x}+x\der{\o}{y})\\
&=\der{\o}{t}+\frac{1}{\k^2}\begin{pmatrix}u_x& u_y \end{pmatrix}\begin{pmatrix}\k^2+y^2& -xy\\ -xy& \k^2+x^2 \end{pmatrix}\begin{pmatrix}\der{\o}{x}\\ \der{\o}{y} \end{pmatrix}.
\end{split}
\ee
We now apply the stream function formulation (\ref{eq:upsi}):
\begin{multline}
\frac{D\o}{Dt}
=\der{\o}{t}+\\
+\frac{1}{\k^2(\k^2+x^2+y^2)}\begin{pmatrix}\der{\psi}{x}& \der{\psi}{y} \end{pmatrix}\begin{pmatrix} xy & \k^2+y^2\\ -\k^2-x^2  & -xy  \end{pmatrix}\begin{pmatrix}\k^2+y^2& -xy\\ -xy& \k^2+x^2 \end{pmatrix}\begin{pmatrix}\der{\o}{x}\\ \der{\o}{y} \end{pmatrix}=\\
=\der{\o}{t}+\begin{pmatrix}\der{\psi}{x}& \der{\psi}{y} \end{pmatrix}\begin{pmatrix}0 & 1\\ -1& 0 \end{pmatrix}\begin{pmatrix}\der{\o}{x}\\ \der{\o}{y} \end{pmatrix}=\der{\o}{t}-\der{\psi}{y}\der{\o}{x}+\der{\psi}{x}\der{\o}{y}.
\end{multline}
Inserting the last equality into equation (\ref{eq:vortdyn}) proves the Lemma.
\end{proof}
\section{Functional Setting and Weak formulation}
\label{sc:fun}
\subsection{Function Spaces}
In this subsection, we will consider measurable functions and the appropriate functional space.
We will understand helical measurable functions and helical vector fields as in Definitions \ref{df:helf} and \ref{df:helv} with the equality being true for almost every $\x$.
\par
We will again use the notation $\D_0=\{(x,y)|(x,y,0)\in \D\}$. Observe that the assumption on $\D$ imply the following for $\D_0$:
\begin{itemize}
\item $\D_0$ is simply connected,
\item $\D_0$ is bounded,
\item $\D_0$ has a twice differentiable boundary.
\end{itemize}
For a helical integrable function $f:\D\rightarrow \R$, the appropriate reduction is $\bar{f}:\D_0\rightarrow \R$, defined by
\be
\bar{f}(x,y)=\frac{1}{2\pi}\int\limits_{0}^{2\pi}{f(S_{\r}(x,y,0))d\r},
\ee
clearly for $g$ continuous, we will have $\bar{g}=g|_{\D_0\times{0}}$. The normalization constant comes from the computation with $f\equiv 1$.
In what follows, we will identify $f$ with $\bar{f}$ and think of $f$ as being a function on $\D_0$. It can be proven that the spaces of functions, which we define for function on $\D_0$ are equivalent to the spaces of helical functions on $\D$ but we will not need this point in our analysis. The only fact, which is obvious in our reduction is
\be
\text{ess}\sup\limits_{\D}f=\text{ess}\sup\limits_{\D_0}\bar{f},
\ee
which says that the class of essentialy bounded helical functions on $\D$ is equivalent to the class of essentialy bounded on $\D_0$.
\begin{defin} Denote by $L^p(\D_0)$ the space of (real-valued) measurable functions on $\D_0$ with the norm
\be
\|f\|^p_p=\int\limits_{\D_0}{|f(x,y)|^pdxdy}.
\ee
\end{defin}
For the inner product in $L^2(\D_0)$ we will use the notation:
\be
\la f,g \ra=\int\limits_{\D_0}{f(x,y)g(x,y)dxdy}.
\ee
\begin{defin} Denote by $W^{n,p}(\D_0)$ the space of measurable functions on $\D_0$ whose distributional derivatives up to order $n$ are $L^p(\D_0)$ functions with the norm
\be
\|f\|^p_{W^{n,p}(\D_0)}=\sum\limits_{|\alpha|\leq n} \left\|\der[\alpha]{f}{x}\right\|^p_p.
\ee
Also, the space $H^1_0(\D_0)$ is the space of $W^{1,2}(\D_0)$ function, which vanish at the boundary, with the norm
\be
\|f\|_{H_0^1}=\int\limits_{\D_0}{\left|\der{f}{x} \right|^2+\left|\der{f}{y} \right|^2dxdy}.
\ee
\end{defin}
We will also introduce the following twisted inner product
\begin{defin}["Helical" inner product]

\be
\begin{split}
\lh f,g \rh&=\int\limits_{\D_0}{\frac{1}{\k^2+x^2+y^2}\begin{pmatrix} \der{f}{x} & \der{f}{y} \end{pmatrix}\begin{pmatrix} \k^2+y^2 & -xy\\ -xy& \k^2 + x^2 \end{pmatrix}\begin{pmatrix} \der{g}{x} \\ \der{g}{y} \end{pmatrix} dxdy}\\
&=\int\limits_{\D_0}{(\nabla f(x,y), K(x,y)\nabla g(x,y)) dxdy},
\end{split}
\ee
where function $K$ was defined in Equation \ref{eq:K}.
The appropriate norm is denoted by $\| f\|_h=\lh f, f \rh^{1/2}$.
\end{defin}
\begin{claim}
\label{cl:heleqv}
The norm $\|\cdot \|_h$ is equivalent to the following semi-norm:
\be
\|f\|_{H_0^1}^2=\int_{\D_0}{\left(\left|\der{f}{x}\right|^2+\left|\der{f}{y}\right|^2\right)dxdy}=\int_{\D_0}{(\nabla f,\nabla f)dxdy}.
\ee
\end{claim}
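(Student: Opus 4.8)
The plan is to show that the symmetric matrix-valued function $K(x,y)$ from \eqref{eq:K} is, uniformly on $\D_0$, bounded above by the identity and bounded below by a positive multiple of the identity, so that the integrand $(\nabla f, K\nabla f)$ defining $\|f\|_h^2$ is pointwise comparable to $|\nabla f|^2$. Integrating this pointwise comparison over $\D_0$ then yields the equivalence of $\|\cdot\|_h$ with the seminorm $\|\cdot\|_{H_0^1}$.

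First I would diagonalize $K$ explicitly. Writing $r^2 = x^2+y^2$ and $M(x,y) = \begin{pmatrix}\k^2+y^2 & -xy\\ -xy & \k^2+x^2\end{pmatrix}$, so that $K = \frac{1}{\k^2+r^2}M$, one computes $\operatorname{tr} M = 2\k^2+r^2$ and $\det M = (\k^2+y^2)(\k^2+x^2)-x^2y^2 = \k^2(\k^2+r^2)$. Hence the discriminant of the characteristic polynomial of $M$ is $(2\k^2+r^2)^2 - 4\k^2(\k^2+r^2) = r^4$, so the eigenvalues of $M$ are exactly $\k^2$ and $\k^2+r^2$. Dividing by $\k^2+r^2$, the eigenvalues of $K(x,y)$ are $\frac{\k^2}{\k^2+r^2}$ and $1$; in particular $K(x,y)$ is symmetric positive definite at every point, and
\be
\frac{\k^2}{\k^2+r^2}\,|v|^2 \;\le\; (v, K(x,y)v) \;\le\; |v|^2 \qquad\text{for all } v\in\R^2.
\ee

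Next I would use the boundedness of $\D_0$: there is $R>0$ with $x^2+y^2 \le R^2$ on $\D_0$, whence $\frac{\k^2}{\k^2+r^2} \ge c := \frac{\k^2}{\k^2+R^2} > 0$ throughout $\D_0$. Taking $v = \nabla f(x,y)$ in the pointwise bound and integrating over $\D_0$ gives
\be
c\,\|f\|_{H_0^1}^2 \;\le\; \|f\|_h^2 \;\le\; \|f\|_{H_0^1}^2,
\ee
which is precisely the claimed equivalence.

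I do not anticipate any genuine obstacle: the only computation is the elementary $2\times 2$ eigenvalue calculation, and the sole place the hypotheses on $\D_0$ are used is the lower bound, where boundedness of $\D_0$ enters (simple connectedness and boundary regularity are irrelevant here). If one prefers to avoid the explicit diagonalization, the upper bound is immediate from the entries of $K$ having modulus at most $1$, and a lower bound follows from $\det K = \k^2/(\k^2+r^2)$ together with $\operatorname{tr} K \le 2$; but the diagonalization is the cleanest route and also identifies the optimal constants.
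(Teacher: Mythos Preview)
Your proof is correct and follows essentially the same approach as the paper's own proof: compute the eigenvalues of $K(x,y)$ to be $1$ and $\frac{\k^2}{\k^2+x^2+y^2}$, use boundedness of $\D_0$ to bound the smaller eigenvalue below by $\frac{\k^2}{\k^2+R^2}$, and integrate the resulting pointwise inequality. The only difference is that you display the eigenvalue computation explicitly via trace and determinant, whereas the paper simply states the eigenvalues.
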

\begin{proof} The matrix valued function $K$ is symmetric and has eigenvalues $1$ and $\frac{\k^2}{\k^2+x^2+y^2}$. Therefore, since $\D_0$ is bounded, we have
\be
\frac{\k^2}{\k^2+R^2}(\nabla f,\nabla f)\leq (K\nabla f,\nabla f) \leq(\nabla f,\nabla f),
\ee
where $R$ is the maximal distance from 0 in $\D_0$. Integrating over $\D_0$ and taking square root proves the claim.
\end{proof}
\subsection{Elliptic Regularity}
\par In this subsection, we will quote the elliptic regularity theory, which was developed by Yudovich in a sequence of papers \cite{YEL1},\cite{YEL2}, based on the Calderon-Zygmund theory for singular integral operators \cite{St} and Agmon et al. \cite{ADN} results on the regularity of elliptic operators up to the boundary. Yudovich \cite{Yud} applied his elliptic regularity theory to the two-dimensional Euler equations. This theory is the main ingredient of our work. We quote his theorem \cite[Theorem 2.1]{Yud}
\par In what follows we will sometimes use $x_1,x_2$ for the variables $x,y$.
\begin{theorem}[Yudovich's Elliptic Regularity]
\label{th:orYEL}
Let $L$ be an elliptic operator with H\"older continuous coefficients. Let $D$ be a compact domain in $\R^n$ with twice differentiable boundary such that $L$ satisfies the following inequality
\be
\label{eq:PTineq}
\int\limits_{D}{u(Lu) dx}\geq m\|u\|^2_{H_0^1({D})}, \quad \forall u\in{H_0^1({D})},\quad m>0.
\ee
Then given $f\in L^p(D)$, there exists $q\in W^{2,p}(D)$ which solves the problem:
\be
Lq=f,\quad q|_{\partial D}=0.
\ee
The function $q$ obeys the estimate:
\be
\|q\|_{W^{2,p}(D)}\leq Cp\|f\|_{L^p(D)}.
\ee
\end{theorem}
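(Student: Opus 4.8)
The plan is to recover Yudovich's theorem by combining a Lax--Milgram existence step with the classical Calder\'on--Zygmund and Agmon--Douglis--Nirenberg regularity machinery, while keeping careful track of how every constant depends on $p$. First I would integrate the principal part of $L$ by parts to turn the coercivity hypothesis (\ref{eq:PTineq}) into coercivity of the Dirichlet bilinear form $a(\cdot,\cdot)$ associated to $L$ on $H_0^1(D)$; boundedness of $a$ is immediate because the H\"older continuous coefficients are bounded on the compact set $\bar D$. For $p\ge 2$ the boundedness of $D$ gives $L^p(D)\subseteq L^2(D)$, so $v\mapsto\la f,v\ra$ is a bounded functional on $H_0^1(D)$, and Lax--Milgram produces a unique weak solution $q\in H_0^1(D)$ of $Lq=f$, $q|_{\partial D}=0$.

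Next I would upgrade $q$ to $W^{2,p}(D)$ by localization and perturbation. Cover $\bar D$ by finitely many small balls with a subordinate partition of unity; on an interior ball freeze the principal coefficients of $L$ at the center to obtain a constant-coefficient operator $L_0$, write $L_0 q = f + (L_0-L)q + (\text{lower order in }q)$, invert $L_0$ through its Newtonian potential $N_{L_0}$, and express $\partial_i\partial_j q$ via the Calder\'on--Zygmund singular integral $\partial_i\partial_j(N_{L_0}\ast\,\cdot\,)$ applied to the right-hand side; the term $(L_0-L)q$ has principal coefficients of arbitrarily small sup-norm on a sufficiently small ball by H\"older continuity, so it is absorbed. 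Near $\partial D$ I would flatten the $C^2$ boundary by a local change of variables and use the half-space estimates of \cite{ADN} with the same freezing and absorption. Patching and a method-of-continuity iteration then give $\|q\|_{W^{2,p}(D)}\le C(\|f\|_{L^p(D)}+\|q\|_{L^p(D)})$, and the zeroth-order term is removed by testing the weak formulation against $q$ and invoking (\ref{eq:PTineq}) together with Poincar\'e's inequality, leaving $\|q\|_{W^{2,p}(D)}\le C\|f\|_{L^p(D)}$.

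The hard part is controlling the $p$-dependence of $C$. The only $p$-sensitive ingredient above is the $L^p\to L^p$ norm of the singular integral operators $\partial_i\partial_j(N_{L_0}\ast\,\cdot\,)$ and their half-space analogues: each is bounded on $L^2$ with a constant depending only on $n$ and the ellipticity constant and is of weak type $(1,1)$ \cite{St}, so Marcinkiewicz interpolation gives operator norm $O(1/(p-1))$ for $1<p\le 2$ and duality gives $O(p)$ for $2\le p<\infty$, uniformly in the freezing point. Everything else --- the number of balls, the freezing radius, the boundary charts, the absorption constants, the number of continuity steps --- depends only on $n$, $D$, $m$, and the H\"older norms of the coefficients of $L$, and not on $p$. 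Hence the composite constant is $\le C(n,D,m,L)\cdot p$ for large $p$ and bounded (hence $\le Cp$) for $p$ near $1$, which is exactly the claimed bound $\|q\|_{W^{2,p}(D)}\le Cp\|f\|_{L^p(D)}$. I expect the genuine obstacle to be verifying that the localization apparatus of the second step carries no hidden $p$-dependence, so that the single factor $p$ from the Calder\'on--Zygmund estimate is the whole story.

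Finally, for $1<p<2$ the datum $f$ need not lie in $L^2(D)$, so the Lax--Milgram step does not apply directly; instead I would argue by density: approximate $f$ in $L^p(D)$ by $f_k\in C^\infty(\bar D)\subset L^2(D)$, solve $Lq_k=f_k$ with $q_k\in H_0^1(D)$ and $q_k\in W^{2,p}(D)$ as above, and apply the linear estimate to $q_k-q_j$ to see that $\{q_k\}$ is Cauchy in $W^{2,p}(D)$; its limit $q\in W^{2,p}(D)$ vanishes on $\partial D$, solves $Lq=f$, and inherits the estimate. This is essentially the argument of Yudovich \cite{Yud,YEL1,YEL2}, whose linear-in-$p$ constant is the feature we shall exploit in Sections \ref{sc:uni} and \ref{sc:exi}.
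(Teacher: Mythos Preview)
The paper does not supply a proof of this theorem at all: it is quoted verbatim from Yudovich (the text immediately preceding the statement reads ``We quote his theorem \cite[Theorem 2.1]{Yud}''), with the underlying machinery attributed to \cite{YEL1,YEL2}, \cite{St}, and \cite{ADN}. So there is no ``paper's own proof'' to compare against.

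Your sketch is a reasonable outline of the standard argument that those references contain --- Lax--Milgram for existence, Calder\'on--Zygmund plus ADN localization for the $W^{2,p}$ estimate, and the observation that the only $p$-dependent constant comes from the singular integral bound, which grows like $p$. That is indeed the mechanism behind the linear-in-$p$ estimate, and your instinct that the delicate point is verifying the localization constants are $p$-independent is correct. For the purposes of this paper, however, the theorem is a black box imported from the literature, and no proof is expected here.
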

\begin{defin}[Helical operator $\LH$]
\label{df:lh}
The operator $\LH$ is  defined by:
\be
\LH \psi= \sum\limits_{i,j=1}^2{\der{}{x_i}(K_{ij}\der{\psi}{x_j})},
\ee
where
\be
K(x_1,x_2)=\frac{1}{\k^2+x_1^2+x_2^2}\begin{pmatrix} \k^2+x_2^2 &- x_1x_2   \\  -x_1x_2 & \k^2+x_1^2 \end{pmatrix}.
\ee
\end{defin}
\begin{claim}
The operator $\LH$ obeys the following identity, whenever both sides are defined
\be
\label{eq:intp}
\lh f,g\rh=-\la \LH f, g \ra,
\ee
with $f,g$ vanishing at the boundary.
\end{claim}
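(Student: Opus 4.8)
The plan is to recognize this identity as Green's formula (integration by parts) for the divergence-form operator $\LH=\nabla\cdot(K\nabla\,\cdot\,)$. First I would unfold the definitions: by Definition \ref{df:lh},
\[
\la \LH f, g\ra=\int_{\D_0} g(x,y)\sum_{i,j=1}^2\frac{\partial}{\partial x_i}\!\left(K_{ij}\frac{\partial f}{\partial x_j}\right)dx\,dy=\int_{\D_0} g\,\nabla\cdot(K\nabla f)\,dx\,dy .
\]
Then I would apply the divergence theorem to the vector field $gK\nabla f$, using $\nabla\cdot(gK\nabla f)=g\,\nabla\cdot(K\nabla f)+(\nabla g,K\nabla f)$, which gives
\[
\la\LH f,g\ra=\int_{\partial\D_0} g\,(K\nabla f,\mathbf{n})\,dS-\int_{\D_0}(\nabla g,K\nabla f)\,dx\,dy .
\]
The boundary integral vanishes because $g$ vanishes on $\partial\D_0$ by hypothesis. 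Finally, since the matrix $K(x,y)$ is symmetric, $(\nabla g,K\nabla f)=(\nabla f,K\nabla g)$, so the remaining integral is precisely $\lh f,g\rh$ by the definition of the helical inner product; this yields $\la\LH f,g\ra=-\lh f,g\rh$ as claimed.

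The step requiring genuine care — the only content beyond symbol pushing — is the justification of the integration by parts under the stated regularity ("whenever both sides are defined"). If $f\in W^{2,p}(\D_0)$ and $g\in H^1_0(\D_0)$ (or, more generally, $g\in W^{1,p'}(\D_0)$ with $g|_{\partial\D_0}=0$), then $K\nabla f\in W^{1,p}(\D_0)$, because $K$ is a bounded $C^\infty$ matrix-valued function on $\overline{\D_0}$ — here one uses crucially that $\k\neq 0$, so the denominator $\k^2+x^2+y^2$ stays bounded away from zero — so that $gK\nabla f\in W^{1,1}(\D_0)$, and the divergence theorem applies on the twice-differentiable domain $\D_0$ with the trace of $g$ vanishing on $\partial\D_0$. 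For less regular $f,g$ one obtains the identity by approximating in the relevant Sobolev norms by smooth functions vanishing near $\partial\D_0$ and passing to the limit using the boundedness of $K$; this is exactly where the clause "whenever both sides are defined" does its work.

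I do not expect a real obstacle here, only the bookkeeping of function-space memberships: one should state once and for all the pair of spaces on which the identity is being asserted (e.g. $f\in W^{2,p}(\D_0)\cap H^1_0(\D_0)$, $g\in H^1_0(\D_0)$, consistent with the output of Theorem \ref{th:orYEL}), check that each term in the two displayed integrals is finite under that choice, and invoke the standard density of $C_c^\infty(\D_0)$ in $H^1_0(\D_0)$ together with the continuity of both bilinear forms — which follows from $\|K\|_{L^\infty}\le 1$ (Claim \ref{cl:heleqv}) — to reduce to the smooth case where the divergence theorem is classical.
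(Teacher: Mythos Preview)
Your argument is correct and is exactly the standard Green's-formula computation one expects here. The paper itself states this claim without proof --- it is left as an evident consequence of integration by parts for the divergence-form operator $\LH=\nabla\cdot(K\nabla\,\cdot\,)$ --- so your write-up actually supplies more detail than the paper does, including the careful justification of the integration by parts at the relevant Sobolev regularity.
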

\begin{corr}
\label{cr:YEL} Let $\LH$ be an operator from Definition \ref{df:lh} and $f\in L^p(\D_0)$ then there exists $\psi \in W^{2,p}(\D_0)$ that solves the following problem:
\be
\LH \psi=f\quad \psi|_{\partial \D_0}=0,
\ee
and there exists a constant $C$ independent of $p$ such that
\be
\|\psi\|_{W^{2,p}}\leq Cp\|f\|_{p}.
\ee
\end{corr}
\begin{proof}
Clearly, $\LH$ has bounded, smooth coefficients. Combine identity (\ref{eq:intp}) with Claim \ref{cl:heleqv} to verify the inequality in equation (\ref{eq:PTineq}).
\end{proof}
We will also need the following statement: (See \cite[Theorem 2.2]{Yud})
\begin{theorem}
\label{th:negEE}
Let $q$ be a solution of
\be
\LH q=\der{f}{x_k},\quad q|_{\partial \D_0}=0.
\ee
Then $q$ obeys the following estimate
\be
||q||_{W^{1,p}}\leq Cp||f||_{L_p}.
\ee
\end{theorem}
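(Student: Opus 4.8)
The plan is to obtain the estimate from the singular‑integral elliptic theory already invoked, i.e.\ as an instance of \cite[Theorem 2.2]{Yud}, once it is checked that $\LH$ meets the required hypotheses. First I would record that $\LH$ is uniformly elliptic on the bounded domain $\D_0$ with smooth (hence H\"older continuous) coefficients: its coefficient matrix $K$ of (\ref{eq:K}) is symmetric with eigenvalues $1$ and $\k^2/(\k^2+x^2+y^2)$, bounded above and bounded below away from $0$ on $\overline{\D_0}$, as noted in the proof of Claim \ref{cl:heleqv}. Next I would verify the coercivity inequality (\ref{eq:PTineq}); this is precisely the verification already carried out for Corollary \ref{cr:YEL}, combining the integration‑by‑parts identity (\ref{eq:intp}) with Claim \ref{cl:heleqv}. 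Granting these, \cite[Theorem 2.2]{Yud} supplies a solution $q$ together with the bound $\|q\|_{W^{1,p}}\le Cp\|f\|_{p}$, with $C$ independent of $p$.

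It is instructive to isolate which part of this is soft. The $L^p$ bound on $q$ alone follows from Corollary \ref{cr:YEL} by duality. Since $K$ is symmetric, identity (\ref{eq:intp}) shows that $\LH$ is symmetric on functions vanishing on $\partial\D_0$, so the Dirichlet solution operator $\LH^{-1}$ (where $\LH^{-1}h$ solves $\LH\phi=h$, $\phi|_{\partial\D_0}=0$) is self‑adjoint on $L^2(\D_0)$. Hence, arguing first for smooth $f$ and then by density, for every $h\in L^{p'}(\D_0)$ with $\phi=\LH^{-1}h$ one has
\[
\la q,h\ra=\la \LH^{-1}\partial_{x_k}f,\,h\ra=\la \partial_{x_k}f,\,\phi\ra=-\la f,\,\partial_{x_k}\phi\ra ,
\]
so that $|\la q,h\ra|\le \|f\|_p\,\|\phi\|_{W^{1,p'}}\le \|f\|_p\,\|\phi\|_{W^{2,p'}}\le Cp'\,\|f\|_p\,\|h\|_{p'}$ by Corollary \ref{cr:YEL} applied with the exponent $p'$. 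Taking the supremum over $\|h\|_{p'}\le 1$ gives $\|q\|_p\le Cp'\|f\|_p$, which for $p\ge 2$ (so that $p'\le 2\le p$) is in fact bounded by a constant independent of $p$; in particular the potentially dangerous linear growth in $p$ sits entirely in the gradient term.

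The estimate $\|\nabla q\|_p\le Cp\|f\|_p$ is the genuine difficulty, and here a soft argument does not suffice: $\nabla q=\nabla\LH^{-1}\partial_{x_k}f$ is a composition of operators of orders $1$, $-2$ and $1$, i.e.\ an operator of order $0$ — a Calder\'on--Zygmund type singular integral — and attempting to bound it by dualizing against Corollary \ref{cr:YEL} only reproduces a term of the same type at the conjugate exponent. The argument I would carry out, following \cite{Yud} (and \cite{YEL1}, \cite{YEL2}), is the classical one: localize by a partition of unity; on interior patches freeze the coefficients of $\LH$ at a point and reduce to a constant‑coefficient model operator on $\R^n$, for which the solution operator with datum $\partial_{x_k}f$ is an explicit zeroth‑order singular integral; on boundary patches flatten the (twice differentiable) boundary $\partial\D_0$ and reduce likewise to a model problem on a half‑space; control the lower‑order commutator terms produced by freezing the coefficients using the estimates already available (Corollary \ref{cr:YEL} together with a Poincar\'e inequality) in an absorption/iteration argument; and conclude from the Calder\'on--Zygmund inequality \cite{St}. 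The linear factor $p$ is exactly the growth of the $L^p\to L^p$ operator norm of a Calder\'on--Zygmund operator as $p\to\infty$. I expect the main obstacle to be this last point — propagating the model‑case singular‑integral bound, with its explicit dependence on $p$, through the localization and the curved boundary — and since it is precisely the content of the quoted theorem I would not reproduce the details here.
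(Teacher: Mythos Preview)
Your proposal is correct and matches the paper's treatment: the paper does not give a proof of this theorem at all but simply cites it as \cite[Theorem 2.2]{Yud}, and your plan is precisely to invoke that result after verifying its hypotheses, which is exactly the verification already done for Corollary \ref{cr:YEL}. Your additional duality argument for the zeroth-order bound and the discussion of where the Calder\'on--Zygmund input enters are helpful elaborations, but the core approach is the same as the paper's.
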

\subsection{The weak formulation and statement of the main theorem}
We will now formulate the weak problem. As a basis, we use Equation \ref{eq:dynohel}. Written in terms of the stream function $\psi$ and operator $\LH$ the equation reads
\be
\der{}{t}(\LH\psi)-\der{\psi}{y}\der{}{x}(\LH\psi)+\der{\psi}{x}\der{}{y}(\LH\psi)=f.
\ee
We define the weak problem in the following way.
\begin{defin}
\label{df:main}
Given $F:\D_0\times \inte\rightarrow\R^2 $ and $\psi_0:\D_0\rightarrow R$ integrable functions. Then  $\psi:\D_0\times \inte\rightarrow \R$ with integrable $\der{\psi}{x_i}$ and $\LH\psi$ is a solution of the weak Euler equation if it obeys the identity
\be
\begin{split}
\label{eq:weak}
\int\limits _{\D_0} {\LH\psi_0(x) \phi(x,0) dxdy}&-\int\limits_{0}^T{\int\limits_{\D_0}{\LH\psi(x,t)\der{\phi}{t}(x,t)dxdy}dt}+\int\limits_{0}^T{\int\limits_{\D_0}{\der{\psi}{y}\LH \psi \der{\phi}{x}dxdy}dt}\\
&-\int\limits_{0}^T{\int\limits_{\D_0}{\der{\psi}{x}\LH \psi \der{\phi}{y}dxdy}dt}
=\int\limits_{0}^T{\int\limits_{\D_0}{\left(\der{F_y}{x}-\der{F_x}{y} \right)\phi dxdy}dt},
\end{split}
\ee
for every smooth function $\phi$ with compact support in $\D_0\times\inte$.
\end{defin}
We are now ready to state the main theorem.
\begin{theorem}[Main theorem]
\label{th:main}
Let $F:\D_0\times\inte \rightarrow \R^2$ be such that $F,\der{F_x}{y}-\der{F_y}{x}\in L^{\infty}(\D_0\times\inte)$. Let $\psi_0\in W^{2,p}$ such that $\LH\psi_0\in L^{\infty}(\D_0)$ and $\psi_0|_{\partial\D_0}=0$ then there exists a unique $\psi:\D_0\times\inte\rightarrow \R$ with $\LH\psi\in L^{\infty}(\D_0\times\inte)$ and $\psi|_{\partial\D_0}=0 ,\forall t\in\inte$, which solves the weak Euler equation in the sense of Definition \ref{df:main}.
\end{theorem}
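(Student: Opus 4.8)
The plan is to adapt Yudovich's scheme \cite{Yud} to the twisted formulation \eqref{eq:dynohel}. Existence will come from extracting a limit of a family of smoothed approximations, for which the transport structure of the vorticity equation furnishes uniform a priori bounds; uniqueness will come from an energy estimate for the difference of two solutions, closed off by an Osgood-type differential inequality. Everything that distinguishes this from the planar case has already been prepared: the elliptic estimates with \emph{explicit} $Cp$ growth (Corollary~\ref{cr:YEL} and Theorem~\ref{th:negEE}) and the norm equivalence $\|\cdot\|_h\simeq\|\nabla\cdot\|_{L^2(\D_0)}$ (Claim~\ref{cl:heleqv}, which also encodes that $\LH$ is uniformly elliptic because $\k\ne0$), so the matrix weight $K$ will only ever enter as a bounded, smooth, uniformly elliptic factor, and structurally the argument is Yudovich's.

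\textbf{Existence.} I would mollify the data to obtain $\o_0^{\ep}\in C^\infty_c(\D_0)$ with $\|\o_0^{\ep}\|_{L^\infty}\le\|\LH\psi_0\|_{L^\infty}$, set $\psi_0^\ep:=\LH^{-1}\o_0^{\ep}$, and solve \eqref{eq:dynohel}; for smooth data this is classical (equivalently one runs Yudovich's successive-approximation iteration, for which the transport field $\u^\ep:=(-\der{\psi^\ep}{y},\der{\psi^\ep}{x})$, being log-Lipschitz by Corollary~\ref{cr:YEL}, has well-defined trajectories), giving a smooth $\psi^\ep$ on all of $\inte$. Because $\u^\ep$ is divergence free and tangent to $\partial\D_0$, the vorticity equation yields $\|\o^\ep(t)\|_{L^q(\D_0)}\le\|\o_0^\ep\|_{L^q}+T\|f\|_{L^q}$ for all $q\le\infty$, uniformly in $\ep$, with $f=\der{F_y}{x}-\der{F_x}{y}\in L^\infty$; hence $\LH\psi^\ep$ is bounded in $L^\infty(\D_0\times\inte)$, and Corollary~\ref{cr:YEL} bounds $\psi^\ep$ in $L^\infty(\inte;W^{2,p}(\D_0))$ for every $p<\infty$. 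Writing the nonlinearity in divergence form, $\der{}{t}\LH\psi^\ep=\der{}{x}\!\big(\der{\psi^\ep}{y}\LH\psi^\ep\big)-\der{}{y}\!\big(\der{\psi^\ep}{x}\LH\psi^\ep\big)+f$, Theorem~\ref{th:negEE} bounds $\der{\psi^\ep}{t}$ in $L^\infty(\inte;W^{1,p}(\D_0))$. Since $W^{2,p}(\D_0)\hookrightarrow\hookrightarrow W^{1,p}(\D_0)$, Aubin--Lions gives a subsequence with $\psi^\ep\to\psi$ strongly in $C(\inte;W^{1,p}(\D_0))$ and $\LH\psi^\ep\rightharpoonup\LH\psi$ weak-$\ast$ in $L^\infty$; in \eqref{eq:weak} the only nonlinearity is the product of $\nabla\psi^\ep$ (strongly convergent, bounded in $L^\infty$) with $\LH\psi^\ep$ (weak-$\ast$ convergent), so one passes to the limit, recovering $\psi|_{\partial\D_0}=0$ (closedness of $W^{1,p}_0$) and the initial datum from the first term of \eqref{eq:weak}. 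This produces a solution in the sense of Definition~\ref{df:main}.

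\textbf{Uniqueness.} Let $\psi^1,\psi^2$ be solutions with the same data, $\bar\psi:=\psi^1-\psi^2$, $\bar\o:=\LH\bar\psi$, $\u^i:=(-\der{\psi^i}{y},\der{\psi^i}{x})$, and $M\ge\|\LH\psi^i\|_{L^\infty(\D_0\times\inte)}$ (finite since the solutions are of the required class). Subtracting the vorticity equations gives $\der{\bar\o}{t}+\u^1\cdot\nabla\bar\o=-(\u^1-\u^2)\cdot\nabla\o^2$. I would pair this with $\bar\psi$ (after a standard time mollification) and follow $\mathcal E(t):=\|\bar\psi(t)\|_h^2\simeq\|\nabla\bar\psi(t)\|_{L^2}^2$. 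Using $\la\LH\der{\bar\psi}{t},\bar\psi\ra=-\tfrac12\tfrac{d}{dt}\|\bar\psi\|_h^2$ and integrating the transport term by parts ($\u^1$ divergence free and tangent to $\partial\D_0$, $\bar\psi|_{\partial\D_0}=0$), the essential cancellation is that the term containing the uncontrolled $\nabla\o^2$ disappears: it equals $\int_{\D_0}\o^2\,\nabla\!\cdot\!\big((\u^1-\u^2)\bar\psi\big)=\int_{\D_0}\o^2\big((\u^1-\u^2)\cdot\nabla\bar\psi\big)$, and $\u^1-\u^2=(-\der{\bar\psi}{y},\der{\bar\psi}{x})$ is pointwise orthogonal to $\nabla\bar\psi$. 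One is left with $\tfrac{d}{dt}\mathcal E=2\lh\bar\psi,\u^1\cdot\nabla\bar\psi\rh$; expanding $\nabla(\u^1\cdot\nabla\bar\psi)$, the second-order piece, after symmetrizing and using $\nabla\!\cdot\!\u^1=0$ with the boundary term gone, collapses to an integral of the form $\int_{\D_0}(\nabla\bar\psi,(\u^1\!\cdot\!\nabla K)\nabla\bar\psi)$, bounded by $C\|\u^1\|_{L^\infty}\mathcal E\le C\mathcal E$ (here $\|\u^1\|_{L^\infty}$ and $\|\nabla\bar\psi\|_{L^\infty}$ are finite via $W^{2,3}(\D_0)\hookrightarrow W^{1,\infty}(\D_0)$ and Corollary~\ref{cr:YEL}), leaving a commutator $\le C\int_{\D_0}|\nabla\u^1|\,|\nabla\bar\psi|^2$. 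Hölder with exponents $p$ and $p/(p-1)$, the bound $\|D^2\psi^1\|_{L^p}\le Cp\|\LH\psi^1\|_{L^p}\le CpM$ from Corollary~\ref{cr:YEL}, and interpolation of $\|\nabla\bar\psi\|_{L^{2p/(p-1)}}$ between $L^2$ and $L^\infty$, control the commutator by $Cp\|\nabla\bar\psi\|_{L^2}^{2-2/p}$; hence $\tfrac{d}{dt}\mathcal E\le Cp\,\mathcal E^{1-1/p}$ for every $p\ge2$, with $C=C(M,T,\D_0,\k)$. Optimizing $p=\log(1/\mathcal E)$ on $\{\mathcal E<e^{-2}\}$ (and $p=2$ otherwise) yields $\tfrac{d}{dt}\mathcal E\le C\mathcal E(1+\log^+(1/\mathcal E))$, and since $\mathcal E(0)=0$ while $\int_0^1\frac{ds}{s(1+\log^+(1/s))}=\infty$, Osgood's lemma forces $\mathcal E\equiv0$, i.e.\ $\psi^1\equiv\psi^2$.

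\textbf{The main obstacle.} As in the planar theory, the crux is that the velocity is only log-Lipschitz: $D^2\psi$ is not bounded, only controlled in every $L^p$ with norm growing like $Cp$. This $Cp$ growth has to be balanced carefully against the interpolation exponent in the commutator term of the uniqueness estimate, and against the time-equicontinuity needed for compactness, so that the differential inequality for $\mathcal E$ comes out borderline (Osgood) rather than merely linear; the choice $p\sim\log(1/\mathcal E)$ is precisely what turns the algebraic loss into a logarithmic modulus of continuity. The part genuinely specific to helical symmetry is to check that the twisted pairing $\lh\cdot,\cdot\rh$ does not spoil this balance — in particular that the new, weight-dependent second-order term is killed by $\nabla K\in L^\infty$ instead of producing an uncontrolled second-order contribution.
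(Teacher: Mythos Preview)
Your proposal is correct and matches the paper's approach: both adapt Yudovich's scheme via the transport $L^\infty$ bound on $\LH\psi$, the $Cp$ elliptic estimates of Corollary~\ref{cr:YEL} and Theorem~\ref{th:negEE}, Aubin--Lions compactness for existence, and the same energy inequality on $\|\psi_1-\psi_2\|_h^2$ for uniqueness (the paper integrates $z'\le C\epsilon^{-1}z^{1-\epsilon}$ and iterates on short time intervals, which is equivalent to your Osgood formulation, and your cancellation $(u^1-u^2)\cdot\nabla\bar\psi=0$ is exactly what makes the $\LH\psi_2$ term drop when the paper sets $\phi=\alpha$). The only cosmetic difference is that the paper constructs the smooth approximating solutions by invoking Ferrari's BKM-type theorem on the full 3D helical problem and then reducing, rather than working directly in the 2D effective equation as you propose.
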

\begin{rema} We will also prove that the problem is well posed in $H^1_0$ norm.
\end{rema}
\begin{rema} Using the ideas of Kato \cite{Kpr}, we will prove prove that the solution is unique within the class of all stream functions $\psi$ with $\LH\psi\in L^{\infty}(\inte,L^2(\D_0))$.
\end{rema}
We wil prove uniqueness in Section \ref{sc:uni} and existence in Section \ref{sc:exi}.
\section{Proof of Uniqueness}
\label{sc:uni}
\par In this section, we will prove the uniqueness part of the Theorem \ref{th:main}. We will employ an energy-principle type of argument, which is identical to that of Yudovich \cite[section 3]{Yud}. We follow closely his idea, retaining even some of his notations.
\par We will need two lemmas.
\begin{lemma}
\label{lm:tm}
A weak solution of the Euler Equation is a solution of the problem for any cylinder $\D_0\times[t_1,t_2]$, for $0\leq t_1<t_2\leq T$.
\end{lemma}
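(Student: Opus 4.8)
The plan is to run Yudovich's time-localization argument. What must be shown is that the given $\psi$ satisfies the identity of Definition \ref{df:main} on every sub-cylinder $\D_0\times[t_1,t_2]$ with the initial datum $\psi_0$ replaced by the time slice $\psi(\cdot,t_1)$; the only genuinely substantive preliminary is to make sense of the slices $\LH\psi(\cdot,t)$, and hence of $\psi(\cdot,t_1)$, for individual $t$.

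\emph{Step 1 (weak continuity in time).} Fix $\eta\in C_c^\infty(\D_0)$ and insert the test functions $\phi(x,y,t)=\eta(x,y)\chi(t)$, $\chi\in C_c^\infty(\inte)$, into (\ref{eq:weak}). This identifies the distributional $t$-derivative of $g_\eta(t):=\la \LH\psi(\cdot,t),\eta\ra$ with the map $t\mapsto \la \partial_y\psi\,\LH\psi,\partial_x\eta\ra-\la \partial_x\psi\,\LH\psi,\partial_y\eta\ra+\la \partial_xF_y-\partial_yF_x,\eta\ra$. By the a priori bound $\|\psi(\cdot,t)\|_{H_0^1(\D_0)}\le C\,\|\LH\psi(\cdot,t)\|_{L^2(\D_0)}$ — obtained by pairing $\LH\psi$ with $\psi$, using (\ref{eq:intp}), Claim \ref{cl:heleqv} and the Poincar\'e inequality — together with $\LH\psi\in L^\infty(\D_0\times\inte)$ and $\partial_xF_y-\partial_yF_x\in L^\infty$, this derivative is bounded uniformly in $t$, so $g_\eta$ has a Lipschitz (in particular continuous) representative on $[0,T]$. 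Since $t\mapsto\|\LH\psi(\cdot,t)\|_{L^2(\D_0)}$ is essentially bounded and $C_c^\infty(\D_0)$ is dense in $L^2(\D_0)$, a routine uniform-approximation argument upgrades this to: $t\mapsto\LH\psi(\cdot,t)$ has a representative continuous from $[0,T]$ into $L^2(\D_0)$ with its weak topology. Applying the solution operator of Corollary \ref{cr:YEL} then gives $\psi(\cdot,t)$ well defined for every $t$, with $\psi(\cdot,t)\big|_{\partial\D_0}=0$ and $\|\LH\psi(\cdot,t)\|_{L^\infty(\D_0)}\le\|\LH\psi\|_{L^\infty(\D_0\times\inte)}$ by weak lower semicontinuity; in particular $\psi(\cdot,t_1)$ is an admissible datum.

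\emph{Step 2 (cutoff and limit).} Assume $0<t_1<t_2\le T$ (the case $t_1=0$ being (\ref{eq:weak}) itself). Given $\phi$ smooth with compact support in $\D_0\times[t_1,t_2)$, extend it smoothly to $\D_0\times\inte$ and, for small $\delta>0$, pick $\theta_\delta\in C^\infty(\R)$ with $\theta_\delta\equiv0$ on $(-\infty,t_1]$, $\theta_\delta\equiv1$ on $[t_1+\delta,T)$, $0\le\theta_\delta\le1$, so that $\theta_\delta'$ is supported in $[t_1,t_1+\delta]$ with $\int\theta_\delta'=1$. Then $\phi\,\theta_\delta$ is admissible in (\ref{eq:weak}); its initial-value term vanishes because $\theta_\delta(0)=0$, and $\partial_t(\phi\theta_\delta)=\theta_\delta\,\partial_t\phi+\theta_\delta'\,\phi$. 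Letting $\delta\to0$: in the advection and forcing integrals $\theta_\delta\to\mathbf 1_{[t_1,T)}$ boundedly, and the integrands lie in $L^1(\D_0\times\inte)$ (by the uniform-in-$t$ bounds on $\|\nabla\psi(\cdot,t)\|_{L^2}$ and $\|\LH\psi(\cdot,t)\|_{L^2}$), so dominated convergence replaces them by the same integrals over $\D_0\times[t_1,t_2)$, and likewise $-\int\!\!\int\LH\psi\,\theta_\delta\,\partial_t\phi\to-\int_{t_1}^{t_2}\!\!\int_{\D_0}\LH\psi\,\partial_t\phi$. Finally $-\int_0^T\!\!\int_{\D_0}\LH\psi\,\theta_\delta'\,\phi = -\int_{t_1}^{t_1+\delta}\theta_\delta'(t)\,\la\LH\psi(\cdot,t),\phi(\cdot,t)\ra\,dt$, and since $t\mapsto\la\LH\psi(\cdot,t),\phi(\cdot,t)\ra$ is continuous at $t_1$ by Step 1 and the smoothness of $\phi$, this converges to the slice pairing $\pm\la\LH\psi(\cdot,t_1),\phi(\cdot,t_1)\ra$, which is precisely the initial-value contribution of Definition \ref{df:main} attached to the cylinder $\D_0\times[t_1,t_2]$ with datum $\psi(\cdot,t_1)$. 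Collecting the four limits yields the weak identity of Definition \ref{df:main} on $\D_0\times[t_1,t_2]$. The case $t_2=T$ needs nothing extra, as admissible $\phi$ already vanish near $T$.

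\emph{Main obstacle.} Steps 2–3 are a soft cutoff-and-pass-to-the-limit. The crux is Step 1: extracting a weakly time-continuous representative of $\LH\psi(\cdot,t)$ so that the restarted datum $\psi(\cdot,t_1)$ and the slice pairing in the limit of the $\theta_\delta'$ term are meaningful. This rests entirely on the uniform-in-$t$ elliptic estimate bounding $\psi$ by $\LH\psi$ (Claim \ref{cl:heleqv}, (\ref{eq:intp}), Corollary \ref{cr:YEL}) and on the $L^\infty$ hypotheses on $\LH\psi$ and on the curl of the forcing.
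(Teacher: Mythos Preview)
Your argument is correct and is essentially the same time-localization idea as the paper's, though you work harder than the paper does. The paper inserts a separated test function $h(t)\phi(x,t)$, reads off that $M_t(\psi,\phi):=-\la\LH\psi(\cdot,t),\phi(\cdot,t)\ra$ has distributional derivative $N_t$ (your advection-plus-forcing functional), concludes $M_t$ is absolutely continuous, and writes $M_{t_2}-M_{t_1}=\int_{t_1}^{t_2}N_\tau\,d\tau$ directly. Your Step~1 upgrades this pairing-by-pairing statement to full weak-$L^2$ continuity of $t\mapsto\LH\psi(\cdot,t)$, and your Step~2 re-derives the fundamental theorem of calculus via the explicit cutoff $\theta_\delta$; both are fine but not needed at that level of generality. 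The ``$\pm$'' in your limiting slice term should of course be resolved to match the sign of the initial-value term in Definition~\ref{df:main}.
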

\begin{proof}
We substitute into the integral relation a test function of the form $h(t)\phi(x,t)$. We have the following equality
\be
-h(0)\int\limits_{\D_0}{(\LH\psi_0)(x,y)\phi(x,y,0)dxdy}+\int\limits_0^T{h'(t)M_t(\psi,\phi)dt}+\int\limits_{0}^T{h(t)N_t(\psi,\phi)dt}=0,
\ee
where
\be
\label{eq:defM}
M_t(\psi,\phi)=-\int\limits_{\D_0}{(\LH\psi)\phi dxdy},
\ee
\be
\label{eq:defN}
N_t(\psi,\phi)=-\int\limits_{\D_0}{\left(\LH\psi\right)\der{\phi}{t}+\LH\psi(\der{\psi}{x}\der{\phi}{y}-\der{\psi}{y}\der{\phi}{x}) dxdy}.
\ee
$M_t$ and $N_t$ are essentially bounded measurable functions. If we choose $h(0)=0$ then
\be
\diff{}{t}M_t=N_t,
\ee
in the sense of weak derivatives. Therefore $M_t$ is weakly differentiable and as such it is absolutely continuous. Thus we have
\be
M_{t_2}-M_{t_1}=\int\limits_{t_1}^{t_2}{N_\tau d\tau}.
\ee
Substituting the definition of $M,N$, equations (\ref{eq:defM}),(\ref{eq:defN}) into the last equation gives
\begin{multline}
\int\limits_{\D_0}{\LH\psi(x,y,t_1)\phi(x,y,t_1)dxdy}-\int\limits_{\D_0}{\LH\psi(x,y,t_2)\phi(x,y,t_2)dxdy}+\\
+\int\limits_{t_1}^{t_2}{\int\limits_{\D_0}{(\LH\psi)\der{\phi}{t}+(\LH\psi)(\der{\psi}{x}\der{\phi}{y}-\der{\psi}{y}\der{\phi}{x}) dxdy}}=0,
\end{multline}
which means that $\psi$ is the weak solution in the cylinder $\D_0\times [t_1,t_2]$.
\end{proof}
\begin{lemma}
\label{lm:psidermix}
Let  $\psi(x,y;t)$ be a weak solution. Then weak derivatives $\dermix{\psi}{x}{t}$,$\dermix{\psi}{y}{t}$ exist and for any $p\geq p_0>1$
\begin{multline}
 \max\limits_{0\leq t\leq T}\Bigl|\Bigl|\dermix{\psi}{x}{t}\Bigr|\Bigr|_{L^p(\D_0)},\max\limits_{0\leq t\leq T}\Bigl|\Bigl|\dermix{\psi}{y}{t}\Bigr|\Bigr|_{L^p(\D_0)}\leq \\
 \leq Cp \max\limits_{0\leq t\leq T}\left[||F||_{L^p(\D_0)}+||\LH\psi||_{L^{\infty}(\D_0)}||\psi||_{W^{1,p}(\D_0)}\right].
\end{multline}
\end{lemma}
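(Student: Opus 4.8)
The plan is to differentiate the vorticity equation (\ref{eq:dynohel}) in $t$, recognize that its right-hand side is a pure spatial divergence, and apply the negative-order elliptic estimate of Theorem \ref{th:negEE} to $\partial\psi/\partial t$. Since the transport field $(\der{\psi}{y},-\der{\psi}{x})$ is divergence free and $f=\der{F_y}{x}-\der{F_x}{y}$, equation (\ref{eq:dynohel}) can be rewritten (with $\o=\LH\psi$) as
\be
\der{}{t}(\LH\psi)=\der{}{x}\Big(\der{\psi}{y}\LH\psi+F_y\Big)+\der{}{y}\Big(-\der{\psi}{x}\LH\psi-F_x\Big)=:\der{g_1}{x}+\der{g_2}{y}.
\ee
Formally, $\partial\psi/\partial t$ then solves $\LH(\partial\psi/\partial t)=\der{g_1}{x}+\der{g_2}{y}$ with zero boundary data, so Theorem \ref{th:negEE} (applied to each divergence term and added, which only changes the constant) gives $\|\partial\psi/\partial t\|_{W^{1,p}}\le Cp(\|g_1\|_{L^p}+\|g_2\|_{L^p})$; and since, by H\"older's inequality, $\|g_1\|_{L^p},\|g_2\|_{L^p}\le \|\LH\psi\|_{L^{\infty}}\|\psi\|_{W^{1,p}}+\|F\|_{L^p}$, this is exactly the asserted bound once one takes the maximum over $t$. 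The only genuine work is to justify the differentiation in time for a merely weak solution.

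To do this rigorously I would argue with difference quotients in time. Fix $\delta>0$ and set $w^{\delta}(x,y,t):=\delta^{-1}\big(\psi(x,y,t+\delta)-\psi(x,y,t)\big)$ for $t\in\inte$ with $t+\delta<T$. By Lemma \ref{lm:tm} the weak identity holds on the cylinder $\D_0\times[t,t+\delta]$; testing it against a \emph{time-independent} $\phi\in C_c^{\infty}(\D_0)$ and integrating the nonlinear and forcing terms by parts in $x,y$ yields
\be
\la\LH\psi(\cdot,t+\delta)-\LH\psi(\cdot,t),\phi\ra=-\int_t^{t+\delta}\Big(\la g_1(\cdot,\tau),\der{\phi}{x}\ra+\la g_2(\cdot,\tau),\der{\phi}{y}\ra\Big)d\tau ,
\ee
that is, $\LH w^{\delta}=\der{}{x}\bar g_1^{\delta}+\der{}{y}\bar g_2^{\delta}$ in $\D_0$, where $\bar g_i^{\delta}(\cdot,t):=\delta^{-1}\int_t^{t+\delta}g_i(\cdot,\tau)\,d\tau$. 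Because $\LH\psi\in L^{\infty}(\D_0\times\inte)$ and $\psi(\cdot,\tau)\in W^{2,p}(\D_0)$ with $\psi(\cdot,\tau)|_{\partial\D_0}=0$ (Corollary \ref{cr:YEL}), the function $w^{\delta}(\cdot,t)$ lies in $W^{2,p}(\D_0)$ and vanishes on $\partial\D_0$, so it is an admissible solution for Theorem \ref{th:negEE}, which gives
\be
\big\|w^{\delta}(\cdot,t)\big\|_{W^{1,p}(\D_0)}\le Cp\big(\|\bar g_1^{\delta}(\cdot,t)\|_{L^p}+\|\bar g_2^{\delta}(\cdot,t)\|_{L^p}\big).
\ee
By Minkowski's integral inequality the averages $\bar g_i^{\delta}$ are bounded in $L^p(\D_0)$ by the essential supremum over $\tau$ of $\|g_i(\cdot,\tau)\|_{L^p}$, and then $\|g_i(\cdot,\tau)\|_{L^p}\le\|\LH\psi(\cdot,\tau)\|_{L^{\infty}}\|\psi(\cdot,\tau)\|_{W^{1,p}}+\|F(\cdot,\tau)\|_{L^p}$; hence $\sup_t\|w^{\delta}(\cdot,t)\|_{W^{1,p}(\D_0)}$ is bounded by the right-hand side of the lemma, uniformly in $\delta$.

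Finally, since $p>1$ the space $W^{1,p}(\D_0)$ is reflexive, so some sequence $w^{\delta_k}$ converges weakly-$*$ in $L^{\infty}\big(\inte;W^{1,p}(\D_0)\big)$; because $w^{\delta}\to\partial\psi/\partial t$ in the sense of distributions, the limit is $\partial\psi/\partial t$, which shows that $\dermix{\psi}{x}{t}$ and $\dermix{\psi}{y}{t}$ exist and, by weak lower semicontinuity of the norm, obey the stated estimate with an essential supremum in $t$; the uniform bound also forces $t\mapsto\nabla\psi(\cdot,t)$ to be Lipschitz into $L^p(\D_0)$, so one may pass to a continuous representative and replace the essential supremum by $\max_{0\le t\le T}$. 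I expect the main obstacle to be precisely this passage from the formal time-differentiation to the difference-quotient argument — extracting the identity for $\LH w^{\delta}$ cleanly from Lemma \ref{lm:tm} and tracking that all constants are independent of both $\delta$ and $p$ — whereas the elliptic input of Theorem \ref{th:negEE} and the divergence-form rewriting of (\ref{eq:dynohel}) are routine.
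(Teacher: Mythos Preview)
Your argument is correct and reaches the same estimate via the same two analytic inputs as the paper --- the divergence-form rewriting of (\ref{eq:dynohel}) and the negative-order elliptic bound of Theorem \ref{th:negEE} --- but the justification step is genuinely different. The paper regularizes \emph{in space}: it builds a sequence $g_k$ with $\LH g_k$ smooth and $\LH g_k\to\LH\psi$ in $L^r(\inte;L^p)$, solves $\LH q_k=\partial_y(\LH g_k\,\partial_x g_k)-\partial_x(\LH g_k\,\partial_y g_k)+f$ at each time, sets $\psi_k=\psi_0+\int_0^t q_k$, and then has to show (by comparing an approximate weak identity with (\ref{eq:weak})) that $\psi_k\to\psi$ so that the uniform $W^{1,p}$ bounds on $\partial_t\psi_k$ pass to the limit. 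You regularize \emph{in time} with difference quotients, obtaining the elliptic problem for $w^{\delta}$ directly from Lemma \ref{lm:tm} and identifying the limit immediately as $\partial_t\psi$.

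Your route is shorter and more self-contained: it avoids constructing the auxiliary sequence $\psi_k$ and the separate convergence argument $\psi_k\to\psi$, and it never needs the $L^{p,r}$ detour with a final $r\to\infty$. The paper's route, on the other hand, produces explicit smooth-in-space approximants $\psi_k$ of the actual solution, which is closer to Yudovich's original presentation and dovetails with the approximation machinery reused in Section \ref{sc:exi}. One small point worth making explicit in your write-up: Theorem \ref{th:negEE} is stated for a single $\partial_{x_k}$ on the right, so when you split $w^{\delta}=q_1+q_2$ with $\LH q_i=\partial_{x_i}\bar g_i^{\delta}$ you are implicitly using uniqueness of the Dirichlet problem for $\LH$ (which follows from the coercivity in Claim \ref{cl:heleqv}) to ensure the estimate applies to \emph{your} $w^{\delta}$ and not merely to some solution.
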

\begin{proof}
$\der{\psi}{t}$ satisfies formally the following equations:
\be \LH\der{\psi}{t}=f-\der{}{x}\left(\LH\psi\der{\psi}{y}\right)+\der{}{y}\left(\LH\psi\der{\psi}{x}\right),\ee
\be
\der{\psi}{t}\Bigl|_{\partial\D_0}=0.
\ee
Take $r>2$ and build a sequence $\{g_k(x,y,t)\},k=1,2...$ such that $\LH g_k$ are continuously differentiable in $\D_0\times\inte, g_k|_{\partial \D_0}=0$ and $\LH g_k\longrightarrow \LH \psi$ in $L^{p,r}=L^r([0,T];L^p(\D_0))$ for every $p>0$. Define $q_k$ by:
\be
\label{eq:defap}
\LH q_k=\der{}{y}\left(\LH g_k\der{g_k}{x}\right)-\der{}{x}\left(\LH g_k\der{g_k}{y}\right)+\der{F_y}{x}-\der{F_x}{y}.
\ee
The application of Theorem (\ref{th:negEE}) gives the following bound:
\be
||q_k||_{W^{1,p}}\leq Cp\left(||F||_{L^p}+\Bigl|\Bigl|\LH g_k\der{g_k}{x}\Bigr|\Bigr|_{L^p}+\Bigl|\Bigl|\LH g_k\der{g_k}{y}\Bigr|\Bigr|_{L^p}\right).
\ee
Define
\be
\psi_k=\psi_0(x)+\int\limits_0^t{q_k(x,\tau)\,d\tau}.
\ee
Then
\be
\label{eq:estdert}
\Big\Arrowvert\dermix{\psi_k}{x}{t}\Big\Arrowvert_{L^{p,r}}^r\leq C_1^rp^r\left(\|F\|_{L^{p,r}}^r+\| \LH \psi\|_{L^{\infty}}^r\|\nabla \psi\|_{L^{p,r}}^r\right).
\ee
The same estimate holds for $\dermix{\psi_k}{y}{t}$.

By definition, the solutions of Equation \ref{eq:defap} obey:
\be
\begin{split}
-\lh q_k,\phi \rh&=\int\limits_{\D_0}{(\LH q_k)\phi dxdy}\\
&=\int\limits_{\D_0}{\left(-(\LH g_k) \der{g_k}{x}\right)\der{\phi}{y}
+\left(-(\LH g_k) \der{g_k}{y}\right)\der{\phi}{x}dxdy}
+\int\limits_{\D_0}{f\phi dxdy},
\end{split}
\ee
for any smooth test function $\phi(x,y,T)=\phi\big|_{\partial \D}=0$. We transform the relation:
\be -\lh q_k,\phi \rh = \lh \der{\psi_k}{t}, \phi \rh=-\diff{}{t} \lh \psi_k,\phi \rh - \la \LH\psi_k \der{\phi}{t}\ra.
\ee
We integrate this from $0$ to $T$ to get
\begin{multline}
\label{eq:approxweak}
\int\limits_{\D_0}{(K(x,y)\nabla \psi_{0},\nabla \phi(x,y,0) dxdy}+\\
+\int\limits_0^T{\int\limits_{\D_0}{-\LH\psi_k\der{\phi}{t}+\LH g_k\left(\der{g_k}{x}\der{\phi}{y}-\der{g_k}{y}\der{\phi}{x}\right)dxdydt}}=\int\limits_0^T{\la f,\phi \ra dt}.
\end{multline}
But according to the construction of $g_k$ we have
\begin{multline}
\label{eq:Bilim}
\lim\limits_{k\rightarrow \infty}{\int\limits_{\D_0\times\inte}{\LH g_k\left(\der{g_k}{x}\der{\phi}{y}-\der{g_k}{y}\der{\phi}{x}\right)dxdydt}}=\\
={\int\limits_{\D_0\times\inte}{\LH \psi\left(\der{\psi}{x}\der{\phi}{y}-\der{\psi}{y}\der{\phi}{x}\right)dxdydt}}.
\end{multline}
Therefore, by comparing Equation (\ref{eq:approxweak}) with the weak formulation - Equation (\ref{eq:weak}) and taking the limit, we conclude that:
\be
\lim\limits_{k\rightarrow \infty} \int\limits_0^T{\lh \psi_k,\der{\phi}{t}\rh}=\int\limits_0^T{\lh \psi,\der{\phi}{t}\rh}.
\ee
Since $\der{\phi}{t}$ are dense in $L^2([0,T],H^1_0)$ and the sequence $\psi_k$ is bounded in $L^2([0,T],H^1_0)$. Also $\psi_k$ are compact in $L^2(\D_0\times\inte)$ because $\der{\psi_k}{t}$ is bounded in $W^{1,p}$, by Equation \ref{eq:estdert} (see \cite[Lemma 8.4]{CF}). We conclude that $\psi_k$ converge weakly to $\psi$ in $L^2([0,T],H^1_0)$ and therefore they converge in norm in the space $L^2(\D_0\times \inte)$. Therefore, the derivatives of $\psi_k$ converge to  limits, which are weak derivatives of $\psi$. These derivatives obey the estimate
\be
\|\dermix{\psi}{x}{t}\|_{L^{p,r}}\leq Cp\left(\|F\|_{L^{p,r}}+\|\LH \psi\|_{L^{\infty}}\|\nabla \psi\|_{L^{p,r}}\right),
\ee
where we take the limit $r\rightarrow \infty$ to establish the required estimate.
\end{proof}
\begin{lemma}
Let $\psi_1$ be a solution of the Euler equation with $\LH\psi_1\in L^{\infty}(\D_0\times\inte)$ with initial condition $\psi_{01}$ with $\LH\psi_{01}\in L^{\infty}(\D_0)$. Then for every $\eta>0$ there exists $\delta>0$ such that if $\psi_2$ is a solution of the Euler equation with $\LH\psi_2\in L^{\infty}(\D_0\times\inte)$ with initial condition $\psi_{02}$ with $\LH\psi_{02}\in L^{\infty}(\D_0)$ then if $\|\psi_{01}-\psi_{02}\|_{H^1_0(\D_0)}<\delta$ then $\|\psi_1(T)-\psi_2(T)\|_{H^1_0(\D_0)}<\eta$. In particular, the solutions are unique.
\end{lemma}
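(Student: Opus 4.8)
The plan is to reproduce Yudovich's energy argument \cite[Section 3]{Yud} for the difference of the two solutions, measured in the norm $\|\cdot\|_h$, which is equivalent to $\|\cdot\|_{H_0^1(\D_0)}$ by Claim \ref{cl:heleqv}. Write $\psi=\psi_1-\psi_2$, $\o=\LH\psi=\LH\psi_1-\LH\psi_2$, $y(t)=\|\psi(t)\|_h^2$, and let $v_i=(-\der{\psi_i}{y},\der{\psi_i}{x})$ be the divergence-free, boundary-tangent effective velocities, $v=v_1-v_2=(-\der{\psi}{y},\der{\psi}{x})$. By Lemmas \ref{lm:tm} and \ref{lm:psidermix} the functions $\LH\psi_i$ are weakly continuous in $t$ and $\nabla\der{\psi_i}{t}\in L^p(\D_0)$, so $y$ is absolutely continuous and the computations below become rigorous through the regularization used in the proof of Lemma \ref{lm:psidermix}. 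First I would subtract the two copies of \eqref{eq:dynohel}, obtaining $\der{\o}{t}+v_1\cdot\nabla\o+v\cdot\nabla(\LH\psi_2)=0$ weakly in space for a.e.\ $t$, then pair it against the test function $\psi$ and use \eqref{eq:intp}, the symmetry of $\LH$ and the fact that $\LH$ commutes with $\der{}{t}$ to identify the time term with $-\tfrac12\diff{y}{t}$. Integrating by parts in the two transport terms, the pointwise identity $v\cdot\nabla\psi\equiv0$ (the gradients $\nabla\psi$ and $\nabla^{\perp}\psi$ are orthogonal) makes the term carrying $\LH\psi_2$ — a priori the dangerous one — drop out, leaving
\be
\tfrac12\diff{y}{t}=-\la\LH\psi,\,v_1\cdot\nabla\psi\ra.
\ee

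To close this estimate I would integrate by parts once more, writing $\LH\psi=\text{div}(K\nabla\psi)$ and moving a gradient onto $v_1\cdot\nabla\psi$; the boundary term vanishes because $\psi,\psi_1$ vanish on $\partial\D_0$, so there $\nabla\psi$ is normal while $v_1$ is tangential. The piece in which the new gradient lands on $\nabla\psi$ is, by the symmetry of $K$, a perfect divergence up to the smooth bounded factor $\nabla K$, hence is bounded by $C\|\nabla\psi_1\|_{L^\infty}\|\nabla\psi\|_{L^2}^2\le C_1 y$. The remaining piece is controlled by $C\int_{\D_0}|\nabla^2\psi_1|\,|\nabla\psi|^2$, and here the two key inputs are: (i) Corollary \ref{cr:YEL} applied to $\psi_1$, which by uniqueness of the $W^{2,p}$ solution gives $\|\nabla^2\psi_1\|_{L^p}\le Cp\,\|\LH\psi_1\|_{L^p}\le Cp\,|\D_0|^{1/p}\,\|\LH\psi_1\|_{L^\infty}$; and (ii) the interpolation $\|\nabla\psi\|_{L^{2p/(p-1)}}\le\|\nabla\psi\|_{L^2}^{1-1/p}\|\nabla\psi\|_{L^\infty}^{1/p}$, where $\|\nabla\psi\|_{L^\infty}\le\|\nabla\psi_1\|_{L^\infty}+\|\nabla\psi_2\|_{L^\infty}$ is finite since $\LH\psi_i\in L^\infty\subset L^4$ and $W^{2,4}(\D_0)\hookrightarrow C^1(\overline{\D_0})$. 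This yields, for every $p\ge2$,
\be
\diff{y}{t}\le C_1 y+C_2\,p\,D^{1/p}\,y^{1-1/p},
\ee
with $C_1,C_2,D$ depending only on $\|\LH\psi_1\|_{L^\infty}$, $\|\LH\psi_2\|_{L^\infty}$, $\D_0$ and $\k$.

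Finally I would optimize over $p$: choosing $p\approx\log(1/y)$ replaces the right-hand side, for small $y$, by $C\,y\log(C'/y)$, an Osgood modulus of continuity. Substituting $w=\log(C'/y)$ turns the inequality into $w'\ge -Cw$, whence $y(T)\le (C')^{1-e^{-CT}}\,y(0)^{e^{-CT}}$, valid on $[0,T]$ provided $y(0)$ is small — which a short continuity/bootstrap argument shows keeps $y$ in the regime where the Osgood inequality was derived. Therefore $y(0)=\|\psi_{01}-\psi_{02}\|_h^2\to0$ forces $y(T)=\|\psi_1(T)-\psi_2(T)\|_h^2\to0$, which is the asserted continuous dependence; taking $\psi_{01}=\psi_{02}$ forces $y\equiv0$, i.e.\ uniqueness. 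The main obstacle is this logarithmic, non-Lipschitz dependence: a direct H\"older bound on $\la\LH\psi,\,v_1\cdot\nabla\psi\ra$ gives only $\diff{y}{t}\lesssim\sqrt{y}$, which is useless for uniqueness, and it is the second integration by parts combined with the linear-in-$p$ constant of Corollary \ref{cr:YEL} that produces an Osgood-integrable bound.
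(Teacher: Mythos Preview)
Your proposal is correct and follows essentially the same route as the paper: the same energy identity in $\|\cdot\|_h$ for $\alpha=\psi_1-\psi_2$, the same cancellation of the $\LH\psi_2$ term via $\nabla^{\perp}\alpha\cdot\nabla\alpha\equiv 0$, the same integration by parts to place the loss on $\nabla^2\psi_1$, and the same combination of the linear-in-$p$ constant of Corollary~\ref{cr:YEL} with the $L^\infty$ bound on $\nabla\alpha$ coming from $\LH\psi_i\in L^\infty$. The only difference is in how the resulting inequality $\diff{y}{t}\lesssim p\,y^{1-1/p}$ is closed: you optimize in $p$ to obtain the Osgood modulus $y\log(C'/y)$ and integrate it directly to $y(T)\lesssim y(0)^{e^{-CT}}$, whereas the paper keeps $\epsilon=2/p$ fixed, integrates to $z^{\epsilon}(t)\le z^{\epsilon}(0)+Ct$, and then iterates this over a finite chain of subintervals of $[0,T]$ with a decreasing sequence of $\epsilon_i$ and $\delta_i$ --- Yudovich's original bootstrapping; the two closures are well known to be equivalent.
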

\begin{proof}
Form
\be
\alpha=\psi_1-\psi_2.
\ee
We use Lemma \ref{lm:tm} for $0$ and $t$ and substract the weak formulations to get
\be
\begin{split}
\label{eq:ap}
-\lh\alpha, \phi \rh&+\int\limits_0^t{\int\limits{\-\LH\alpha \der{\phi}{t} +\LH\alpha\left(\der{\psi_1}{x}\der{\phi}{y}-\der{\psi_1}{y}\der{\phi}{x}\right)}}\\
&+\LH\psi_2\left(\der{\alpha}{x}\der{\phi}{y}-\der{\alpha}{y}\der{\phi}{x}\right)dxdydt=0,
\end{split}
\ee
for any function $\phi(x,y,t)$ which is smooth in $\D_0\times\inte$ and $\phi\Bigl|_{\partial\D_0}=0.$\\
We now put $\phi=\alpha$. We have proved that $\psi_1,\psi_2$ and $\alpha$ possess at least 2 weak derivatives which are essentially bounded. Therefore, the left hand side of Equation (\ref{eq:ap}) as a functional on $\phi$ is $W^{1,p}$-continuous and therefore we can substitute $\phi=\alpha$. We get
\be
-\lh \alpha,\alpha \rh +\int\limits_0^t{\int\limits_{\D_0}{-\LH\alpha\der{\alpha}{t} +
\LH\alpha\left(\der{\psi_1}{x}\der{\alpha}{y}-\der{\psi_1}{y}\der{\alpha}{x}\right)dxdydt}}=0.
\ee
We can integrate by parts:
\be
\begin{split}
\frac{1}{2}\|\alpha \|_{h}^2
&=\int\limits_0^t{\int\limits_{\D_0}{K(x,y)\dermix{\psi_1}{x}{y}\left[
\left(\der{\alpha}{x}\right)^2-\left(\der{\alpha}{y}\right)^2  \right] dxdy}dt}\\
&+\int\limits_0^t{\int\limits_{\D_0}{K(x,y)\left(\der[2]{\psi_1}{y}-\der[2]{\psi_1}{x} \right)\der{\alpha}{x}\der{\alpha}{y}dxdy}dt}.
\end{split}
\ee
Since  $\der{\alpha}{x},\der{\alpha}{y}$ are in $W^{1,p}$ and thus are bounded, then we can write
\be
\label{eq:derab}
\left|\nabla\alpha\right|^2\leq M^2.
\ee
Denote
\be
z^2(t)=\|\alpha \|_h^2.
\ee
Differentiating in $t$, we get:
\be
z\diff{z}{t}\leq\int\limits_{\D_0}{\left(\left|\dermix{\psi_1}{x}{y}\right|+\left|\der[2]{\psi_1}{y}-\der[2]{\psi_1}{x}\right|\right)|\nabla\alpha|^2dxdy}.
\ee
Using Equation (\ref{eq:derab}):
\be
z\diff{z}{t}\leq CM^{\epsilon}\int\limits_{\D_0}{\left(\Bigl|\dermix{\psi_1}{x}{y}\Bigl|+\Bigl|\der[2]{\psi_1}{y}-\der[2]{\psi_1}{x}\Bigl|\right)(\nabla\alpha)^{2-\epsilon}dxdy}.
\ee
We now apply H\"older inequality and Yudovich's elliptic estimate to get:
\be
z\diff{z}{t}\leq  CM^{\epsilon}\left(\left|\left|\dermix{\psi}{x}{y}\right|\right|_{L^{\frac{2}{\epsilon}}}+\left|\left|\der[2]{\psi_1}{y}-\der[2]{\psi_1}{x}\right|\right|_{L^{\frac{2}{\epsilon}}}\right)z^{2-\epsilon}\leq M^{\epsilon}C\frac{2}{\epsilon}M_1z^{2-\epsilon},
\ee
where  $M_1=\|\LH\psi_1\|_{L^{\infty}(\D_0\times \inte)}$.
We integrate
\be
z^{\epsilon}(t)-z^{\epsilon}(0)=(CM)^{\epsilon}2CM_1t.
\ee
\be
\label{eq:diffz}
\frac{z(t)}{CM}\leq \left(2CM_1t+\left[\frac{z(0)}{CM}\right]^{\epsilon} \right)^{1/\epsilon}
\ee
Choose $\eta>0$.
Denote
\be
K=\left\lceil\frac{T}{4CM_1} \right\rceil.
\ee
We will chose a sequence $\delta_1,\delta_2,...\delta_{K+1}$ in the following manner. Take $\delta_{K+1}=\eta$ and with $\delta_{i+1},..,\delta_{K+1}$ chosen, choose $\epsilon_i$ such that
\be
\left[ \frac{3}{4} \right]^{1/\epsilon_i}< \delta_{i+1}.
\ee
With this choice of $\epsilon_i$, choose $\delta_i$ such that
\be
\frac{\delta_i}{CM}\leq \frac{1}{4^{1/\epsilon_i}}.
\ee
Denote
\be
\tau=\frac{T}{4CM_1}
\ee
For this sequence of choices, the following holds: if $z\left([k-1]\tau\right)<\delta_i$ then $z\left(k\tau\right)<\delta_{i+1}$. This is because using $\epsilon_i$ we apply equation (\ref{eq:diffz}) starting from time $(k-1)\tau$ then
\be
\begin{split}
z(k\tau) & \leq \left(2CM_1\tau +\left[\frac{z([k-1]\tau)}{CM}\right]^{\epsilon_i} \right)^{1/\epsilon_i}\\
&\leq \left(\frac{1}{2}+\delta_i^{\epsilon_i}\right)^{1/\epsilon_i}\leq  \left(\frac{3}{4}\right)^{1/\epsilon_i}\leq \delta_{i+1}.
\end{split}
\ee
Therefore, if we have two initial conditions such that $\|\psi_{01}-\psi_{02}\|_{H^1_0}<\delta_1$ then $\|\psi_1(T)-\psi_2(T)\|_{H^1_0}<\eta$.
In particular $z(t)=0$ for $z(0)=0$, for every $t\in\inte$.
\end{proof}
We will now prove uniqueness in a larger class of functions, following the work of Kato \cite{Kpr}. We will need the following Sobolev-type inequality (see \cite[II.2.2]{Ga}).
\begin{lemma}
\label{lm:crSob}
Let $\phi$ in $H^1_0(D)$ then for $2<p<\infty$
\be
\| \phi\|_{L^p}\leq \left(\frac{p}{2\sqrt{2}}\right)^{\frac{p-2}{p}}\|\phi\|_{L^2}^{2/p}\|\phi\|_{H_0^1(D)}^{\frac{p-2}{p}}.
\ee

\end{lemma}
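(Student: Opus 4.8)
The plan is to prove Lemma~\ref{lm:crSob} by the classical two-dimensional slicing argument (Ladyzhenskaya / Gagliardo--Nirenberg), paying attention to the numerical constants. By density of $C_c^\infty(D)$ in $H_0^1(D)$ and continuity of both sides of the asserted inequality in the $H_0^1$ norm, it suffices to treat $\phi\in C_c^\infty(D)$; extending such $\phi$ by zero to all of $\R^2$ leaves each of the three norms unchanged, so I may work with $\phi\in C_c^\infty(\R^2)$.

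The core estimate is elementary: for a compactly supported $w\in W^{1,1}(\R^2)$, writing $w(x_1,x_2)$ as the integral of $\partial_1 w$ from $-\infty$ and from $+\infty$ and averaging gives $|w(x_1,x_2)|\le\tfrac12\int_{\R}|\partial_1 w(s,x_2)|\,ds$, and likewise $|w(x_1,x_2)|\le\tfrac12\int_{\R}|\partial_2 w(x_1,s)|\,ds$; multiplying these bounds and integrating over $\R^2$, the resulting double integral factors and yields $\|w\|_{L^2(\R^2)}^2\le\tfrac14\|\partial_1 w\|_{L^1}\|\partial_2 w\|_{L^1}$. I would apply this to $w=|\phi|^{p/2}$, which is Lipschitz with compact support since $p>2$, so that $|\partial_i w|=\tfrac p2|\phi|^{p/2-1}|\partial_i\phi|$ a.e.; Cauchy--Schwarz gives $\|\partial_i w\|_{L^1}\le\tfrac p2\|\phi\|_{L^{p-2}}^{p/2-1}\|\partial_i\phi\|_{L^2}$, and the arithmetic--geometric mean inequality gives $\|\partial_1\phi\|_{L^2}\|\partial_2\phi\|_{L^2}\le\tfrac12\|\nabla\phi\|_{L^2}^2=\tfrac12\|\phi\|_{H_0^1}^2$. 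Altogether,
\[
\|\phi\|_{L^p}^{p}\ \le\ \frac{p^2}{32}\,\|\phi\|_{L^{p-2}}^{p-2}\,\|\phi\|_{H_0^1}^{2}.
\]

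To conclude, I would eliminate the $L^{p-2}$ norm. For $p\ge 4$ one has $2\le p-2\le p$, so $\|\phi\|_{L^{p-2}}\le\|\phi\|_{L^2}^{\theta}\|\phi\|_{L^p}^{1-\theta}$ with $\theta$ fixed by $\tfrac1{p-2}=\tfrac\theta2+\tfrac{1-\theta}{p}$; inserting this into the displayed bound, the power of $\|\phi\|_{L^p}$ produced on the right is strictly less than $p$, hence can be absorbed on the left, and solving for $\|\phi\|_{L^p}$ gives the asserted estimate with constant at most $\big(\tfrac{p}{4\sqrt2}\big)^{(p-2)/p}\le\big(\tfrac{p}{2\sqrt2}\big)^{(p-2)/p}$ (at $p=4$ the intermediate step is the genuinely smooth choice $w=\phi^2$ and the interpolation is trivial). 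For $2<p<4$ one does not apply the slicing step to $|\phi|^{p/2}$ at all: having the inequality at $p=4$, one recovers $\|\phi\|_{L^p}$ from $\|\phi\|_{L^2}$ and $\|\phi\|_{L^4}$ by interpolation, which again yields a constant dominated by $\big(\tfrac{p}{2\sqrt2}\big)^{(p-2)/p}$.

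The only delicate point is bookkeeping of the multiplicative constant through the Cauchy--Schwarz, arithmetic--geometric mean and interpolation steps and checking it stays below $p/(2\sqrt2)$ uniformly in $p\in(2,\infty)$; since the natural argument above in fact loses less at every stage, the stated bound follows a fortiori, and in any case it is precisely the estimate recorded in \cite[II.2.2]{Ga}, which may be quoted directly.
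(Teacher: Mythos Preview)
The paper does not give a proof of this lemma at all: it simply states the inequality and cites \cite[II.2.2]{Ga}. Your proposal therefore goes well beyond what the paper does, supplying a complete self-contained argument via the classical two-dimensional slicing (Ladyzhenskaya/Gagliardo--Nirenberg) method. The proof is correct; your constant-tracking yields in fact the sharper factor $(p/4\sqrt{2})^{(p-2)/p}$ for $p\ge 4$ and $2^{-(p-2)/(2p)}$ for $2<p<4$, both of which are dominated by the stated $(p/2\sqrt{2})^{(p-2)/p}$. Since the paper treats this as a quotable black box, either your full argument or your closing remark that one may simply invoke \cite[II.2.2]{Ga} is adequate here.
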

\begin{proposition}Let $\psi_1,\psi_2$ be a weak solution of the Euler Equations such that $\LH\psi_1\in L^{\infty}(\D_0\times\inte)$ and $\LH\psi_2\in L^{\infty}(\inte,L^2(\D_0))$ that have the same initial condition $\psi_0$ with $\LH\psi_0\in L^{\infty}(\D_0)$ , forcing $F\in L^{\infty}(\D_0\times[0,T])$ and $f=\der{F_x}{y}-\der{F_y}{x}\in L^{\infty}(\D_0\times[0,T])$ then
\be
\psi_1=\psi_2.
\ee
\end{proposition}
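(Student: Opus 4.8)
The plan is to imitate the energy argument of the preceding lemma and of Yudovich~\cite{Yud}, with Kato's rearrangement of the nonlinearity~\cite{Kpr}. Put $\alpha=\psi_1-\psi_2$. Subtracting the two weak formulations and splitting the nonlinear difference as in Equation~(\ref{eq:ap}), namely as $\int\LH\alpha\,J(\psi_1,\phi)+\int\LH\psi_2\,J(\alpha,\phi)$ with $J(g,h):=\der{g}{x}\der{h}{y}-\der{g}{y}\der{h}{x}$, the second summand — the only one carrying the low-regularity factor $\LH\psi_2$ — is antisymmetric and so vanishes identically when $\phi=\alpha$. This is exactly why the hypothesis on $\psi_2$ can be weakened: since $\LH\psi_2$ is only $L^2$ in space, $\psi_2$ lies merely in $L^\infty(\inte;W^{2,2}(\D_0))$, so $\nabla\alpha$ is not essentially bounded but only lies in $\bigcap_{q<\infty}L^q(\D_0)$ with controlled norms, and this loss is absorbed by the interpolation inequality of Lemma~\ref{lm:crSob}.

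First I would collect the a priori bounds. From $\LH\psi_1\in L^\infty(\D_0\times\inte)$ and Corollary~\ref{cr:YEL}, $\psi_1\in L^\infty(\inte;W^{2,p}(\D_0))$ for every $p<\infty$, with $\|D^2\psi_1(t)\|_{L^p}\le Cp\|\LH\psi_1\|_{L^\infty}=:CpM_1$ and $\nabla\psi_1\in L^\infty$. From $\LH\psi_2\in L^\infty(\inte;L^2(\D_0))$ we get $\sup_t\|\LH\alpha(t)\|_{L^2}<\infty$, so elliptic regularity for $W^{2,2}$, together with the trivial bound $\|\nabla\alpha(t)\|_{L^2}\le\|\nabla\psi_1\|_{L^2}+\|\nabla\psi_2\|_{L^2}$, yields a finite constant $A$ with $\|\nabla\alpha(t)\|_{W^{1,2}(\D_0)}\le A$ for every $t$. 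Next, by Lemma~\ref{lm:tm} I subtract the weak formulations on $\D_0\times[0,t]$ and, having set $\phi=\alpha$ (which kills the $\LH\psi_2$ term), integrate by parts in the remaining nonlinear term exactly as in the preceding lemma. With $z(t):=\|\alpha(t)\|_h$, comparable to $\|\nabla\alpha(t)\|_{L^2}$ by Claim~\ref{cl:heleqv}, this gives the energy identity
\[
z(t)\,\frac{dz}{dt}\;=\;\tfrac12\frac{d}{dt}\|\alpha(t)\|_h^2\;\le\;C\int_{\D_0}\bigl|D^2\psi_1(x,y,t)\bigr|\;|\nabla\alpha(x,y,t)|^2\,dxdy .
\]

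The heart of the matter is then the estimate of the right-hand side. Fix $p$ large with conjugate $p'=p/(p-1)$ and set $\delta:=2/p$. By H\"older's inequality and the bound on $D^2\psi_1$,
\[
\int_{\D_0}|D^2\psi_1|\,|\nabla\alpha|^2\;\le\;\|D^2\psi_1\|_{L^p}\,\|\nabla\alpha\|_{L^{2p'}}^{2}\;\le\;CpM_1\,\|\nabla\alpha\|_{L^{2p'}}^{2},
\]
and since $2p'=\tfrac{2p}{p-1}\in(2,\infty)$, Lemma~\ref{lm:crSob} (or, as $\nabla\alpha$ need not vanish on $\partial\D_0$, its analogue on $W^{1,2}(\D_0)$, valid since $\D_0$ is a bounded $C^2$ domain) together with $\|\nabla\alpha(t)\|_{W^{1,2}}\le A$ gives $\|\nabla\alpha\|_{L^{2p'}}\le C(p)A^{1/p}\|\nabla\alpha\|_{L^2}^{1-1/p}$, where $C(p)$ stays bounded as $p\to\infty$ (the exponent $2p'$ tends to $2$). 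Combining with the energy identity,
\[
z\,\frac{dz}{dt}\;\le\;\frac{C_1}{\delta}\,z^{\,2-\delta},\qquad C_1=C_1(M_1,A)\ \text{bounded as}\ \delta\to0^{+}.
\]
Hence $\frac{d}{dt}\bigl(z^{\delta}\bigr)=\delta z^{\delta-2}\!\left(z\frac{dz}{dt}\right)\le C_1$, so $z^{\delta}(t)\le z^{\delta}(0)+C_1t=C_1t$ because $\psi_1$ and $\psi_2$ share the initial datum $\psi_0$; thus $z(t)\le(C_1t)^{1/\delta}$, and letting $\delta\to0^{+}$ forces $z(t)=0$ on $[0,1/C_1)$. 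Since $M_1$ and $A$ bound the solutions on all of $\inte$, restarting the argument at $\tfrac{1}{2C_1},\tfrac{1}{C_1},\ldots$ (where $z$ again vanishes) covers $\inte$ in finitely many steps — this is the iterated-$\delta$ device of the preceding lemma. Therefore $\|\alpha\|_h\equiv0$, so by Claim~\ref{cl:heleqv} $\nabla\alpha\equiv0$, and since $\alpha|_{\partial\D_0}=0$ on the connected domain $\D_0$, $\alpha\equiv0$, i.e. $\psi_1=\psi_2$.

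I expect the real difficulty to be technical: justifying that $\phi=\alpha$ is an admissible test function and that $t\mapsto\|\alpha(t)\|_h^2$ is absolutely continuous — for now $\nabla\alpha$ lies only in $\bigcap_{q<\infty}L^q(\D_0)$ and $\psi_2$ has no regularity in $t$ beyond that implicit in the weak formulation, so the $W^{1,p}$-continuity argument of the preceding lemma does not apply verbatim. I would handle this by rerunning the approximation scheme of Lemma~\ref{lm:psidermix}: approximate $\LH\psi_2$ in $L^{r}(\inte;L^2(\D_0))$ and use Theorem~\ref{th:negEE} with an exponent slightly below $2$ to produce $\partial_t\psi_2\in L^{r}(\inte;L^{q}(\D_0))$ for every $q<\infty$, then pass to the limit; the one delicate term, $\int\LH\psi_2\,J(\alpha,\phi)$, is antisymmetric and hence vanishes at $\phi=\alpha$ before the limit is taken, which is precisely the role of Kato's rearrangement.
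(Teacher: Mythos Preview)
Your proposal is correct and follows essentially the same route as the paper: form $\alpha=\psi_1-\psi_2$, substitute $\phi=\alpha$ so that the antisymmetric $\LH\psi_2$ term drops, integrate by parts to throw all second derivatives onto $\psi_1$, apply H\"older together with the $Cp$ elliptic estimate for $D^2\psi_1$, use the interpolation inequality of Lemma~\ref{lm:crSob} on $\nabla\alpha$ with the $W^{2,2}$ bound on $\alpha$ absorbing the high-exponent factor, and integrate the resulting Osgood-type inequality $z\,dz/dt\le (C/\delta)\,z^{2-\delta}$ to get $z(t)\le(Ct)^{1/\delta}\to0$ on a short interval, then iterate. Your remark that $\nabla\alpha\notin H^1_0$ in general, so one should invoke the $W^{1,2}$ (Gagliardo--Nirenberg) version of Lemma~\ref{lm:crSob}, is a valid refinement of a point the paper glosses over.
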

\begin{proof}
Continuing along the lines of the previous proof, form the difference
\be
\alpha=\psi_1-\psi_2.
\ee
Again, use Lemma \ref{lm:tm} for $0$ and $t$ and substract the weak formulations to get
\be
\begin{split}
-\lh\alpha, \phi \rh&+\int\limits_0^t{\int\limits{\-\LH\alpha \der{\phi}{t} +\LH\alpha\left(\der{\psi_1}{x}\der{\phi}{y}-\der{\psi_1}{y}\der{\phi}{x}\right)}}\\
&+\LH\psi_2\left(\der{\alpha}{x}\der{\phi}{y}-\der{\alpha}{y}\der{\phi}{x}\right)dxdydt=0,
\end{split}
\ee
for any function $\phi(x,y,t)$ which is smooth in $\D_0\times\inte$ and $\phi\Bigl|_{\partial\D_0}=0.$\\
We now put $\phi=\alpha$. We have proved that $\psi_1,\psi_2$ and $\alpha$ possess at least 2 weak derivatives that are in $L^2$. Therefore, the left hand side of Equation (\ref{eq:ap}) as a functional on $\phi$ is $H^1$-continuous and therefore we can substitute $\phi=\alpha$. We get
\be
-\lh \alpha,\alpha \rh +\int\limits_0^t{\int\limits_{\D_0}{-\LH\alpha\der{\alpha}{t} +
\LH\alpha\left(\der{\psi_1}{x}\der{\alpha}{y}-\der{\psi_1}{y}\der{\alpha}{x}\right)dxdydt}}=0.
\ee
We can integrate by parts:
\be
\begin{split}
\frac{1}{2}\|\alpha \|_{h}^2
&=\int\limits_0^t{\int\limits_{\D_0}{K(x,y)\dermix{\psi_1}{x}{y}\left[
\left(\der{\alpha}{x}\right)^2-\left(\der{\alpha}{y}\right)^2\right]  dxdy}dt}\\
&+\int\limits_0^t{\int\limits_{\D_0}{K(x,y)\left(\der[2]{\psi_1}{y}-\der[2]{\psi_1}{x} \right)\der{\alpha}{x}\der{\alpha}{y}dxdy}dt}.
\end{split}
\ee
Differentiating in $t$
\be
\begin{split}
\frac{1}{2}\diff{}{t}\|\alpha \|_{h}^2&=\int\limits_{\D_0}{K(x,y)\dermix{\psi_1}{x}{y}
\left[\left(\der{\alpha}{x}\right)^2-\left(\der{\alpha}{y}\right)^2\right]  dxdy}\\
&+\int\limits_{\D_0}{K(x,y)\left(\der[2]{\psi_1}{y}-\der[2]{\psi_1}{x} \right)\der{\alpha}{x}\der{\alpha}{y}dxdy}.
\end{split}
\ee
Since $\LH\psi_1\in L^{\infty}$ then we can apply elliptic estimates
\be
\left\|\dermix{\psi_1}{x_i}{x_j}\right\|_{L^p}\leq CpM,
\ee
where $M=\sup\limits_{t\in\inte}{\|\LH\psi_1\|_{L^{\infty}(\D)}}$.
We use H\"older inequallity to get
\be
\frac{1}{2}\diff{}{t}\|\alpha \|_{h}^2\leq CpM\|\nabla\alpha\|^2_{L^{2q}},
\ee
where $\frac{1}{p}+\frac{1}{q}=1$. Since the helical norm $\|\alpha\|_h$ is equivalent to $H_0^1$-norm $\|\nabla\alpha\|_{L^2}$ and since $\alpha|_{\partial\D_0}=0$ we can rewrite the previous inequallity as
\be
\frac{1}{2}\diff{}{t}\|\alpha \|_{h}^2\leq CpM\|\nabla\alpha\|^2_{L^{2q}},
\ee
We use Lemma \ref{lm:crSob} to conclude
\be
\|\nabla\alpha\|^2_{L^{2q}}\leq \left(\frac{q}{\sqrt{2}}\right)^{2(1-1/q)}\|\nabla\alpha\|_{L^2}^{2/q}\|\nabla\alpha\|_{H^1_0}^{2(1-1/q)}.
\ee
Observe that by applying elliptic estimates for $p=2$, we have
\be
\|\nabla\alpha\|_{H^1_0}\leq 2C\|\LH\psi_1-\LH\psi_2\|_{L^2(\D_0)},
\ee
which is bounded in time by the assumptions of the proposition.
Denote
\be
z=\|\nabla\alpha\|_{h}.
\ee
Collecting the estimates and redefining $C$, we have
\be
z\diff{z}{t}\leq C p \left(\frac{q}{\sqrt{2}}\right)^{2(1-1/q)} z^{2/q}=C p \left(\frac{p}{(p-1)\sqrt{2}}\right)^{2/p} z^{2-2/p},
\ee
\be
\frac{p}{2}\diff{}{t}z^{2/p}\leq Cp \left(\frac{p}{(p-1)\sqrt{2}}\right)^{2/p}.
\ee
Therefore,
\be
z(t)\leq \frac{p}{(p-1)\sqrt{2}}(2Ct)^{p/2}.
\ee
Taking $\tau\leq \frac{1}{4C}$ and $p\rightarrow \infty$ proves that $z(\tau)=0$ and by iteration, for all times.
\end{proof}
\section{Proof of Existence}
\label{sc:exi}
In this section, we will prove existence of the weak solutions. Our strategy is to smoothen the initial data and the forcing, apply classical methods to prove existence of the solutions for smoothened data and then apply compactness arguments to extract a subsequence converging to the weak solution. A crucial step in the argument is an application of Lemma \ref{lm:psidermix}, which allows the required control of the compactness.
\subsection{Construction of a sequence of smooth approximations}
\label{sbc:constr}
\begin{claim}
\label{cl:mol}
Let $C\in \R^n$ be a domain with twice differentiable boundary, with compact closure, let $f\in L^{\infty}(C)$. Then there exists a sequence $f_n\in C^{\infty}(C)$ such that $f_n(x)$ converges to $f(x)$ for almost every $x\in C$ and $\|f_n\|_{L^{\infty}}\leq \|f\|_{L^{\infty}}$.
\end{claim}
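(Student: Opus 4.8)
The plan is to apply the classical mollification construction; the content of the claim is entirely in checking that the procedure simultaneously delivers smoothness, the $L^{\infty}$ bound, and almost-everywhere convergence, and none of these steps is genuinely delicate.

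First I would extend $f$ to all of $\R^n$ by setting it equal to $0$ outside $C$; call the result $\tilde f$. Since $\overline{C}$ is compact, $C$ has finite measure, so $\tilde f \in L^1(\R^n)\cap L^{\infty}(\R^n)$, and trivially $\|\tilde f\|_{L^{\infty}(\R^n)} = \|f\|_{L^{\infty}(C)}$. Next, fix a standard mollifier $\rho \in C^{\infty}_c(\R^n)$ with $\rho \geq 0$, $\operatorname{supp}\rho \subseteq \{|x|\leq 1\}$ and $\int_{\R^n}\rho\,dx = 1$, put $\rho_{\ep}(x) = \ep^{-n}\rho(x/\ep)$, and set $g_{\ep} = \tilde f * \rho_{\ep}$. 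Then $g_{\ep} \in C^{\infty}(\R^n)$ automatically, and by Young's inequality (equivalently, because $\rho_{\ep}$ is a probability density),
\[
\|g_{\ep}\|_{L^{\infty}(\R^n)} \leq \|\tilde f\|_{L^{\infty}(\R^n)}\,\|\rho_{\ep}\|_{L^1(\R^n)} = \|f\|_{L^{\infty}(C)},
\]
so the required uniform bound is in place for every $\ep>0$.

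It remains to produce the almost-everywhere convergence. By the standard approximation theorem for mollifiers, $g_{\ep} \to \tilde f$ in $L^1(\R^n)$ as $\ep \downarrow 0$; restricting to $C$ and using $\tilde f|_C = f$, this gives $g_{\ep}|_C \to f$ in $L^1(C)$. Picking any sequence $\ep_m \downarrow 0$ and then, by the usual measure-theoretic fact that $L^1$-convergence forces a.e.\ convergence along a subsequence, passing to a subsequence $\ep_{m_k}$, I obtain $g_{\ep_{m_k}}(x) \to f(x)$ for a.e.\ $x \in C$. Relabelling, the functions $f_n := g_{\ep_{m_n}}\big|_C$ lie in $C^{\infty}(C)$ (in fact in $C^{\infty}(\overline{C})$, since $\tilde f$ has compact support), satisfy $\|f_n\|_{L^{\infty}} \leq \|f\|_{L^{\infty}}$ by the displayed estimate, and converge to $f$ a.e.\ on $C$, which is what is claimed.

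I do not expect a real obstacle; the twice-differentiable-boundary hypothesis is not used here and is only needed for the elliptic-regularity statements elsewhere. The one point worth a sentence is that, because $\tilde f$ vanishes outside $C$, the mollified functions do not reproduce $f$ in an $\ep$-neighbourhood of $\partial C$; this is harmless, since only a.e.\ convergence on $C$ is asserted and it follows at once from the global $L^1(\R^n)$-convergence above (the offending neighbourhood has measure $O(\ep)$).
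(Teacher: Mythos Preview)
Your proof is correct and follows essentially the same mollification approach as the paper. The only minor difference is that the paper first truncates $f$ to $\chi_{C_{2\ep_n}}f$ before convolving (so that the resulting $f_n$ is compactly supported in $C$), whereas you extend by zero and restrict; since the claim only asks for $f_n\in C^{\infty}(C)$ and a.e.\ convergence, your version is equally valid, and your subsequence extraction from $L^1$-convergence could even be replaced by the direct Lebesgue-point statement $g_\ep(x)\to\tilde f(x)$ a.e.
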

\begin{proof}
Let $\sigma:\R^n\rightarrow [0,\infty)$ be a standard mollification kernel (A positive, $C^{\infty}(\R^n)$ function, which is compactly supported on the unit ball and whose total integral is one) . And define
\be
\sigma_{\ep}(x)=\frac{1}{\ep^n}\sigma\left(\frac{x}{\ep}\right).
\ee
Define the set $C_\ep$ by
\be
C_{\ep}=\{x\in C|d(x,\partial C)\geq \ep \},
\ee
the set of all $x\in C$ whose distance from the boundary of $C$ is greater then $\ep$. Take a sequence of $\ep_n$ going to zero and define
\be
f_n=\sigma_{\ep_n}\ast (\chi_{C_{2\ep_n}}f),
\ee
where $\ast$ is the convolution in $\R^n$. The sequence $\{f_n\}$ obeys the requirements of the lemma.
\end{proof}
Observe that by employing Lebesgue dominated convergence, we conclude that $f_n$ converges to $f$ in every $L^p$ for $p<\infty$ and weak-* in $L^{\infty}$.
\par Let us now return to the weak problem as defined in Definition \ref{df:main}. Let $\o_0=\LH\psi_0$ be the initial vorticity and $f=\der{F_y}{x}-\der{F_x}{y}$ be the forcing of the vorticity. We choose the sequences $\o_0^n \in C^{\infty}(\D_0) $ and $f^n\in C^{\infty}(\D_0\times\inte)$, which obey the conditions of the Claim \ref{cl:mol}. Define $\OO_0^n=\frac{1}{\k}\vec{\t} \o_0^n$ and $\mathbf{f}^n=\frac{1}{\k}\vec{\t} f^n$, the vector field $\vec{\t}$ was defined in equation (\ref{eq:vect}). We know extend $\OO_0^n$ and $\mathbf{f}^n$ to helical vector fields on $\D$ by Equation \ref{eq:recvec}.
\par We now obtain a sequence smooth initial velocity fields $\u_0^n$ and body forcing terms $\mathbf{F}^n$, which obey the orthogonality condition.
We now wish to solve the Euler Equations in the vorticity formulation with initial data $\u_0^n$ and vorticity forcing term $\mathbf{F}^n$ in the domain $\D$ .
We will employ the following theorem of Ferrari\footnote{We restate the theorem adapted to our setting in a standard manner.} (see \cite[Theorem 2]{Fer}).
\begin{theorem}[Ferrari]
\label{th:Fer}
Let $\OO_0\in H^s(D),s\geq 2$ be the initial vorticity , $D$ is bounded simply connected domain then there exists a solution $\OO\in C([0,T],H^s(D))$ if and only if
\be
\int\limits_{0}^T {\|\o \|_{L^\infty(D)} dt}\text{ is finite}.
\ee
\end{theorem}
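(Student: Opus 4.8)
The plan is to read Theorem~\ref{th:Fer} as a continuation (blow--up) criterion of Beale--Kato--Majda type on the bounded, simply connected domain $D$. The \emph{only if} direction is essentially free: since $s\ge 2$ exceeds half the dimension of $D$, the Sobolev embedding $H^s(D)\hookrightarrow L^\infty(D)$ gives $\|\o(t)\|_{L^\infty(D)}\le C\|\OO(t)\|_{H^s(D)}$, so if $\OO\in C(\inte,H^s(D))$ then $t\mapsto\|\OO(t)\|_{H^s(D)}$ is continuous on a compact interval, hence bounded, and the integral is finite. The substance is in the \emph{if} direction.

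First I would produce a maximal smooth solution. Given $\OO_0\in H^s(D)$, recover the velocity by the Biot--Savart law: solve $\u=\nabla\wedge\PS$, $\nabla\cdot\u=0$, with $\u\cdot\mathbf n=0$ on $\partial D$ (simple connectedness removes the harmonic ambiguity), obtaining $\u\in H^{s+1}(D)$ with $\|\u\|_{H^{s+1}(D)}\le C\|\OO\|_{H^s(D)}$ by the Agmon--Douglis--Nirenberg elliptic estimates up to the boundary. A standard iteration (or Galerkin) scheme for the vorticity equation~\eqref{eq:Evort} then yields a unique solution on a maximal interval $[0,T^*)$ with $\OO\in C([0,T^*),H^s(D))$; the goal becomes to show $T^*>T$ under the hypothesis $\int_0^{T}\|\o\|_{L^\infty(D)}\,dt<\infty$.

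The energy estimate is the engine. Applying $D^\alpha$ for $|\alpha|\le s$ to~\eqref{eq:Evort}, pairing with $D^\alpha\OO$ in $L^2(D)$, and using $\nabla\cdot\u=0$, the no--penetration condition, and the Kato--Ponce commutator bounds, one arrives at
\begin{equation}
\diff{}{t}\|\OO(t)\|_{H^s(D)}\le C\,\|\nabla\u(t)\|_{L^\infty(D)}\,\|\OO(t)\|_{H^s(D)}.
\end{equation}
This is coupled with the logarithmic Sobolev inequality on the bounded domain,
\begin{equation}
\|\nabla\u\|_{L^\infty(D)}\le C\bigl(1+\|\o\|_{L^\infty(D)}\bigr)\bigl(1+\log^{+}\|\OO\|_{H^s(D)}\bigr),
\end{equation}
where the auxiliary $L^2$ term that appears in the $\R^n$ version is harmless here because $\|\o\|_{L^2(D)}\le|D|^{1/2}\|\o\|_{L^\infty(D)}$. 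Setting $M(t)=e+\|\OO(t)\|_{H^s(D)}$ and combining the two displays gives $\diff{}{t}\log M(t)\le C\bigl(1+\|\o(t)\|_{L^\infty(D)}\bigr)\bigl(1+\log M(t)\bigr)$, whence by Gr\"onwall
\begin{equation}
1+\log M(t)\le\bigl(1+\log M(0)\bigr)\exp\!\Bigl(C\!\int_0^t\!\bigl(1+\|\o(\tau)\|_{L^\infty(D)}\bigr)d\tau\Bigr).
\end{equation}
Thus $\|\OO(t)\|_{H^s(D)}$ stays finite up to time $T$ whenever $\int_0^T\|\o\|_{L^\infty(D)}\,dt<\infty$, so $T^*>T$ and $\OO\in C(\inte,H^s(D))$.

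I expect the main obstacle to be the bounded--domain logarithmic Sobolev inequality in the second display. On $\R^n$ it follows immediately from the explicit homogeneous Biot--Savart kernel; on $D$ one must instead work with the Green's function of $-\Delta$ under the boundary condition induced by $\u\cdot\mathbf n=0$, splitting the representation of $\nabla\u$ into near, intermediate, and far zones relative to $\partial D$ and controlling the singular intermediate contribution by $\|\o\|_{L^\infty(D)}\log^{+}\|\OO\|_{H^s(D)}$. This refinement of the Beale--Kato--Majda estimate to bounded domains is precisely Ferrari's contribution; once it is in hand, the local theory and the Gr\"onwall closure above are routine.
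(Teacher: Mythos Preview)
The paper does not actually prove Theorem~\ref{th:Fer}: it is quoted verbatim from Ferrari's article \cite{Fer} and used as a black box (the paper only remarks that the needed variant for helically periodic domains ``is proved along the same lines, as the original theorem of Ferrari''). There is therefore no in-paper proof to compare your proposal against.

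That said, your sketch is the correct outline of the Beale--Kato--Majda argument transplanted to a bounded domain, and it is exactly the strategy Ferrari follows: local $H^s$ well-posedness plus the $H^s$ energy estimate controlled by $\|\nabla\u\|_{L^\infty}$, closed via a logarithmic inequality bounding $\|\nabla\u\|_{L^\infty}$ in terms of $\|\o\|_{L^\infty}$ and $\log^{+}\|\OO\|_{H^s}$. You have also correctly identified the only nontrivial new ingredient, namely the bounded-domain logarithmic estimate obtained from the Green's function rather than the explicit $\R^n$ Biot--Savart kernel. So your proposal is sound and matches the approach of the reference the paper cites; there is simply nothing further to compare in the present paper.
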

Theorem \ref{th:Fer} is an extension of the theorem by Beal, Kato and Majda \cite{BKM}, who proved the same criterion for Euler equations in the whole $\R^n$.
We will need a modification of Theorem \ref{th:Fer}, for a helical domain $\D$, which is bounded in the $\hat{x},\hat{y}$ direction and periodic in the $\hat{z}$ direction and solution, which are periodic in the $\hat{z}$ direction. It is proved along the same lines, as the original theorem of Ferrari. We now apply the theorem of Ferrari.
\begin{lemma}
\label{lm:oninftest}
For every $n\in\mathbf{N}$ there exists a smooth helical solution of the Euler Equations  $\OO^n\in C^{\infty}(\D\times\inte)$ with the initial vorticity $\OO^n_0$ and forcing $\mathbf{f}_n$. Moreover, it obeys the following estimate:
\be
\sup\limits_{t\in\inte}{\|\OO^n(t) \|_{L^{\infty}}}\leq M,
\ee
where
\be
M=C(\|\OO_0\|_{L^{\infty}}+\int\limits_{0}^T{\|\mathbf{f}(t)\|_{L^\infty}}).
\ee
\end{lemma}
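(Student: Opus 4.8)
The plan is to read off both the existence and the uniform bound from the transport structure of the scalar vorticity established in Section~\ref{sc:geo}. First I would record the a priori estimate. Fix $n$ and suppose a smooth helical solution of the Euler equations exists on a maximal interval $[0,T^\ast)$, with initial velocity $\u_0^n$ (helical and orthogonal to the helices, with vorticity $\OO_0^n$) and with body forcing $\F^n$ constructed orthogonal to the helices, so that $F^n_{\vec{\t}}=0$ and $\nabla\wedge\F^n=\frac{1}{\k}\vec{\t}\,f^n=\mathbf{f}^n$. By Corollary~\ref{cor:uksi} the solution stays orthogonal to the helices for all $t<T^\ast$, so Lemma~\ref{lm:repo} gives $\OO^n=\frac{\o^n}{\k}\vec{\t}$ with $\o^n=\O^n_z$ a helical function, and Corollary~\ref{cr:transo} gives $\frac{D\o^n}{Dt}=f^n$. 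Integrating this scalar transport equation along the (smooth) particle trajectories yields $\|\o^n(t)\|_{L^\infty(\D)}\le\|\o_0^n\|_{L^\infty}+\int_0^t\|f^n(s)\|_{L^\infty}\,ds$ for every $t<T^\ast$.

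Next I would convert this into the asserted bound on $\OO^n$. Since $\o_0^n$ and $f^n$ are the mollifications produced in Claim~\ref{cl:mol}, one has $\|\o_0^n\|_{L^\infty}\le\|\o_0\|_{L^\infty}\le\|\OO_0\|_{L^\infty}$ and $\|f^n\|_{L^\infty}\le\|f\|_{L^\infty}\le\|\mathbf{f}\|_{L^\infty}$, the last inequalities holding because $\o_0$ and $f$ are the $\hat z$-components of $\OO_0$ and $\mathbf{f}=\nabla\wedge\F$. As $\D$ is contained in a cylinder of radius $R$ about the $\hat z$-axis, $|\OO^n|=|\o^n|\sqrt{\k^2+x^2+y^2}/\k\le\frac{\sqrt{\k^2+R^2}}{\k}|\o^n|$, whence $\sup_{0\le t<T^\ast}\|\OO^n(t)\|_{L^\infty}\le M$ with $M=\frac{\sqrt{\k^2+R^2}}{\k}\bigl(\|\OO_0\|_{L^\infty}+\int_0^T\|\mathbf{f}(t)\|_{L^\infty}\,dt\bigr)$, a bound uniform in $n$.

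To produce the solution I would apply the helical--periodic analogue of Ferrari's theorem. The datum $\OO_0^n$ is smooth, divergence free, helical and orthogonal to the helices, hence $\OO_0^n\in H^s(\D)$ for all $s\ge2$; Theorem~\ref{th:Fer}, in the form adapted to the cylinder that is bounded in $\hat x,\hat y$ and $2\pi\k$-periodic in $\hat z$, then provides a smooth helical solution on a maximal interval $[0,T^\ast)$ (helicity is preserved because both data and forcing are helical and smooth solutions are unique). On $[0,T^\ast)$ the solution is smooth, so the a priori estimate of the first paragraph applies and $\int_0^{T^\ast}\|\o^n(t)\|_{L^\infty}\,dt<\infty$; by the Beal--Kato--Majda type continuation criterion in Theorem~\ref{th:Fer} this rules out $T^\ast<T$, so the solution extends to all of $\inte$, belongs to $C^\infty(\D\times\inte)$, and obeys $\sup_{t\in\inte}\|\OO^n(t)\|_{L^\infty}\le M$.

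The main obstacle I anticipate is the adaptation of Theorem~\ref{th:Fer} itself, which is quoted for a bounded simply connected domain while $\D$ is unbounded in $\hat z$. I would handle this by pushing the problem down to the bounded two-dimensional domain $\D_0$ through the reduction of Section~\ref{sbsc:red}: $2\pi\k$-periodicity in $\hat z$ together with helical symmetry makes Ferrari's local existence, the continuation criterion, and the elliptic regularity for the stream function carry over essentially verbatim. A secondary point to check is that recovering $\u_0^n$ and $\F^n$ from $\OO_0^n$ and $\mathbf{f}^n$ by the Biot--Savart law preserves helical symmetry and orthogonality to the helices, which follows from Lemma~\ref{lm:repo} and the remark immediately after it. The remaining ingredients --- the transport equation and the norm comparisons --- are routine.
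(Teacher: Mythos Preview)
Your proposal is correct and follows essentially the same route as the paper: both use Corollary~\ref{cr:transo} to bound $\o^n=\O^n_z$ along trajectories, Lemma~\ref{lm:repo} together with the finite-radius cylinder to pass from $\o^n$ to the full vorticity $\OO^n$, and the helical--periodic adaptation of Ferrari's theorem (Theorem~\ref{th:Fer}) to close the existence argument. Your write-up is in fact more explicit than the paper's terse proof, spelling out the continuation argument, the constant $C=\sqrt{\k^2+R^2}/\k$, and the use of Claim~\ref{cl:mol} to make the bound uniform in $n$.
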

\begin{proof} Observe that if $\OO^n$ obeys the required estimate then it fulfills the conditions of the theorem of Ferrari and it is in the class $H^s$. Then we take $s$ to infinity will prove that $\OO$ is a $C^{\infty}$ vector field.
Observe that $\OO$ are classical differentiable functions and therefore, we can apply the standard tools of calculus. We will use Corollary \ref{cr:transo} to conclude the estimate for $\O_z$. Then we will use Lemma \ref{lm:repo} and the fact that $\D$ is contained in a cylinder of finite radius and therefore $|\O_x|$ and $|\O_y|$ are smaller then $C|\O_z|$ for some bounded constant $C$.
\end{proof}
From now on we will fix $p\geq 2$. Unlike in the previous section, where the dependence of the constants on $p$ was crucial to the argument, in this section it will not play any role, since $p$ is fixed and we will subsume this dependence into constants.
\par After obtaining the solutions $\OO^n(\x,t)$, we define
\be
\o^n=(\OO^n)_z,
\ee
a sequence of smooth helical functions, which we reduce to function on $\D_0$. Then we obtain solutions to the problem
\be
\LH\psi^n=\o^n,\quad \psi^n|_{\partial\D_0}=0.
\ee
\subsection{Uniform bounds on approximating sequence}
\begin{claim}
\label{cl:psiw2p} Let $\psi^n$ be the stream function corresponding to $\o^n$ then $\psi^n$ obeys the following estimate:
\be
\sup\|\psi^n(t)\|_{W^{2,p}(\D_0)}\leq CM.
\ee
\end{claim}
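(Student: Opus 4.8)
The plan is to obtain the bound by chaining the uniform $L^\infty$ control of the approximate vorticities from Lemma \ref{lm:oninftest} with Yudovich's elliptic regularity in the form of Corollary \ref{cr:YEL}. First I would note that $\o^n=(\OO^n)_z$ is a single Cartesian component of $\OO^n$, so the estimate $\sup_{t\in\inte}\|\OO^n(t)\|_{L^\infty}\leq M$ supplied by Lemma \ref{lm:oninftest} immediately yields $\sup_{t\in\inte}\|\o^n(t)\|_{L^\infty(\D_0)}\leq M$; the passage to $\D_0$ is the averaging reduction of Section \ref{sc:fun}, which does not increase the essential supremum. Crucially, the constant $M$ depends only on $\|\OO_0\|_{L^\infty}$ and $\int_0^T\|\mathbf f(t)\|_{L^\infty}\,dt$, hence is independent of $n$.

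Next, since $\D_0$ is bounded we have $|\D_0|<\infty$, so for the fixed exponent $p\geq 2$ of this section,
\be
\|\o^n(t)\|_{L^p(\D_0)}\leq |\D_0|^{1/p}\,\|\o^n(t)\|_{L^\infty(\D_0)}\leq |\D_0|^{1/p}\,M,
\ee
and because $p$ and $\D_0$ are fixed, the factor $|\D_0|^{1/p}$ is just a constant. Then I would recall that $\psi^n$ is by construction the solution of the Dirichlet problem $\LH\psi^n=\o^n$, $\psi^n|_{\partial\D_0}=0$, and that $\LH$ meets the coercivity hypothesis (\ref{eq:PTineq}) — this was verified in the proof of Corollary \ref{cr:YEL} via the integration-by-parts identity (\ref{eq:intp}) together with the norm equivalence of Claim \ref{cl:heleqv}. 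Applying Corollary \ref{cr:YEL} for each fixed $t$ gives $\|\psi^n(t)\|_{W^{2,p}(\D_0)}\leq Cp\,\|\o^n(t)\|_{L^p(\D_0)}$, and combining this with the previous display, absorbing $Cp\,|\D_0|^{1/p}$ into a single constant $C$, yields $\|\psi^n(t)\|_{W^{2,p}(\D_0)}\leq CM$ for every $t\in\inte$; taking the supremum over $t$ finishes the claim.

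The main point to watch is not an analytic obstacle but a bookkeeping one: one must check that the resulting bound is genuinely uniform in $n$, which it is, since neither $M$ (Lemma \ref{lm:oninftest}) nor the elliptic constant (Corollary \ref{cr:YEL}) depends on $n$. I would also remark, as the text already does, that the $p$-dependence of the elliptic constant — which was essential in Section \ref{sc:uni} — is harmless here because $p$ is fixed, so $Cp$ is simply a constant. All the real content has already been placed in Lemma \ref{lm:oninftest} (the transport-based $L^\infty$ control of the vorticity) and in the elliptic theory of Corollary \ref{cr:YEL}; the present claim is essentially their immediate conjunction.
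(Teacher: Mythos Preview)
Your argument is correct and is exactly the paper's approach: the paper's proof is the single sentence ``This is the immediate corollary of the elliptic regularity, Corollary \ref{cr:YEL} together with the fact that the $L^{\infty}$ norm of the vorticity is uniformly bounded,'' and you have simply unpacked that sentence (passing from $L^\infty$ to $L^p$ on the bounded domain and absorbing $Cp|\D_0|^{1/p}$ into the constant).
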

\begin{proof}
This is the immediate corollary of the elliptic regularity, Corrolary \ref{cr:YEL} together with the fact that the $L^{\infty}$ norm of the vorticity is uniformly bounded.
\end{proof}
We will also need a uniform bound on the time derivative of $\psi^n$:
\begin{lemma}
\label{lm:derpsin}
Let $\psi^n$ be the stream function corresponding to $\o^n$
\be
\sup\limits_{t\in\inte} \|\der{\psi^n}{t}\|_{W^{1,p}(\D_0)}\leq C(\|f\|_{L^{\infty}}+M^2).
\ee
\end{lemma}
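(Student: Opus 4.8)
The plan is to derive an elliptic equation for $\der{\psi^n}{t}$ and then read the bound off the elliptic estimates already in hand. Because the approximate data are smooth, Lemma~\ref{lm:oninftest} gives $\OO^n\in C^{\infty}(\D\times\inte)$, hence $\o^n=(\OO^n)_z$ is smooth and, by interior and boundary elliptic regularity for smooth data, so is $\psi^n$; in particular $\der{\psi^n}{t}$ is a genuine smooth function vanishing on $\partial\D_0$, since $\psi^n|_{\partial\D_0}=0$ for every $t$. Differentiating $\LH\psi^n=\o^n$ in time and using the vorticity equation \eqref{eq:dynohel} for $\o^n$ with smoothed forcing $f^n=\der{F_y^n}{x}-\der{F_x^n}{y}$, one gets
\[
\LH\!\left(\der{\psi^n}{t}\right)=\der{\o^n}{t}=f^n+\der{\psi^n}{y}\der{\o^n}{x}-\der{\psi^n}{x}\der{\o^n}{y}.
\]
The key observation is that the nonlinear terms have Jacobian structure and can be written as a divergence,
\[
\der{\psi^n}{y}\der{\o^n}{x}-\der{\psi^n}{x}\der{\o^n}{y}=\der{}{x}\!\left(\o^n\der{\psi^n}{y}\right)-\der{}{y}\!\left(\o^n\der{\psi^n}{x}\right),
\]
the two copies of $\o^n\,\dermix{\psi^n}{x}{y}$ cancelling.

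Next I would split $\der{\psi^n}{t}=q_1+q_2$, using uniqueness of the Dirichlet problem for $\LH$ (coercivity via \eqref{eq:intp} and Claim~\ref{cl:heleqv}), where $\LH q_1=f^n$, $q_1|_{\partial\D_0}=0$, and $\LH q_2=\der{}{x}(\o^n\der{\psi^n}{y})-\der{}{y}(\o^n\der{\psi^n}{x})$, $q_2|_{\partial\D_0}=0$. For $q_1$, Corollary~\ref{cr:YEL} (with $p$ now fixed, so its factor $p$ is absorbed into the constant) gives $\|q_1\|_{W^{1,p}}\le\|q_1\|_{W^{2,p}}\le C\|f^n\|_{L^p}\le C\|f\|_{L^{\infty}}$, using $\|f^n\|_{L^{\infty}}\le\|f\|_{L^{\infty}}$ from Claim~\ref{cl:mol} and the boundedness of $\D_0$. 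For $q_2$, solving for the two divergence terms separately and applying Theorem~\ref{th:negEE} to each yields
\[
\|q_2\|_{W^{1,p}}\le C\left(\left\|\o^n\der{\psi^n}{x}\right\|_{L^p}+\left\|\o^n\der{\psi^n}{y}\right\|_{L^p}\right)\le C\,\|\o^n\|_{L^{\infty}}\,\|\psi^n\|_{W^{1,p}}.
\]

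Finally I would close the estimate with the uniform bounds already available: $\|\o^n\|_{L^{\infty}}\le\|\OO^n\|_{L^{\infty}}\le M$ from Lemma~\ref{lm:oninftest}, and $\|\psi^n\|_{W^{1,p}}\le\|\psi^n\|_{W^{2,p}}\le CM$ from Claim~\ref{cl:psiw2p}, so that $\|q_2\|_{W^{1,p}}\le CM^2$. Adding the two contributions gives $\|\der{\psi^n}{t}\|_{W^{1,p}(\D_0)}\le C(\|f\|_{L^{\infty}}+M^2)$ uniformly in $t\in\inte$, as claimed. I do not expect a genuine analytic obstacle here; the only points requiring a little care are the rigorous justification of differentiating in $t$ under $\LH$ (clear from the smoothness in Lemma~\ref{lm:oninftest}), the divergence rewriting of the nonlinearity, and keeping track that the constant is allowed to depend on the now-fixed exponent $p$.
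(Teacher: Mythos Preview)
Your argument is correct and shares the same core mechanism as the paper's: an elliptic equation for $\der{\psi^n}{t}$, the Jacobian rewritten in divergence form, and the $W^{1,p}$ estimate of Theorem~\ref{th:negEE} on the divergence part. The differences are organizational. The paper proves this lemma by invoking Lemma~\ref{lm:psidermix} wholesale; that lemma was stated for general weak solutions and therefore carries an internal approximation argument that is superfluous here, since the $\psi^n$ are already smooth by Lemma~\ref{lm:oninftest}. You bypass that and run the elliptic argument directly, which is cleaner in this smooth setting. The other difference is the treatment of the forcing: the estimate in Lemma~\ref{lm:psidermix} is phrased in terms of $\|F\|_{L^p}$, so the paper must then control $\|F^n\|_{L^p}$ by $\|f^n\|_{L^\infty}$ via an additional elliptic step ($\Delta\mathbf{F}^n=\nabla\wedge\mathbf{f}^n$ together with Theorem~\ref{th:negEE}); you instead solve $\LH q_1=f^n$ and apply Corollary~\ref{cr:YEL} directly, using $\|f^n\|_{L^\infty}\le\|f\|_{L^\infty}$ from Claim~\ref{cl:mol}. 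Your route avoids that detour at the cost of not reusing Lemma~\ref{lm:psidermix} as a black box.
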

\begin{proof}
We will apply Lemma \ref{lm:psidermix}. The estimate in Lemma \ref{lm:psidermix} depends on the norm of the body forcing function $\mathbf{F}^n$. We wish to replace it with the estimate on $\mathbf{f}^n$. Since the velocity fields are incompressible, we can replace $\mathbf{F}^n$ with it's incompressible part without affecting the dynamics. But then $\mathbf{F}^n$ obeys
\be
\Delta\mathbf{F}^n=\nabla\wedge \mathbf{f}^n.
\ee
We now apply the elliptic regularity in the form of Theorem \ref{th:negEE} to conclude that the norms of the body forcing term are controlled by the vorticity forcing term. Then we can apply Lemma \ref{lm:psidermix} and the uniform estimate on $\|\psi^n(t)\|_{W^{2,p}}$ from Claim \ref{cl:psiw2p} to conclude the lemma.
\end{proof}
\subsection{Extraction of a solution to the weak problem}
After establishing uniform in $n$ estimates, we will now apply Aubin compactness argument for the sequence $\{\psi^n\}$. We will use the following theorem, \cite[Lemma 8.4]{CF}
\begin{theorem}
\label{th:comp}
Let $X_1,X_0,X_{-1}$ be reflective Banach spaces such that $X_1$ is compactly embedded into $X_0$ and $X_{0}$ is continuous embedded in $X_{-1}$. Let $u_m$ be a bounded sequence in $L^{p_1}(\inte,X_1)$ such that $\diff{u_m}{t}$ is bounded in $L^{p_2}(\inte,X_{-1})$ then there exists subsequence $u_{m'}$ which converges in $L^{p_1}(\inte,X_0)$.
\end{theorem}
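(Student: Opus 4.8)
The plan is to deduce Theorem~\ref{th:comp} from two classical ingredients: an interpolation inequality of Ehrling--Lions type, which converts the qualitative compactness of the embedding $X_1 \hookrightarrow X_0$ into a quantitative estimate, and the vector-valued Fr\'echet--Kolmogorov (Riesz) criterion for relative compactness in $L^{p_1}(\inte,X_0)$. Recall that this criterion states that a bounded family $\mathcal{F}\subseteq L^{p_1}(\inte,X_0)$, with $p_1<\infty$, is relatively compact in $L^{p_1}(\inte,X_0)$ provided (i) for every $0\le t_1<t_2<T$ the set $\bigl\{\int_{t_1}^{t_2}u(\tau)\,d\tau : u\in\mathcal{F}\bigr\}$ is relatively compact in $X_0$, and (ii) the time translates are uniformly small: $\sup_{u\in\mathcal{F}}\|u(\cdot+h)-u(\cdot)\|_{L^{p_1}(0,T-h;X_0)}\to 0$ as $h\to 0^{+}$. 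The whole argument then reduces to checking (i) and (ii) for $\{u_m\}$; once this is done the criterion produces the convergent subsequence $u_{m'}$.

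The first step is the Ehrling inequality: for every $\eta>0$ there exists $C_\eta>0$ such that $\|v\|_{X_0}\le \eta\|v\|_{X_1}+C_\eta\|v\|_{X_{-1}}$ for all $v\in X_1$. This is proved by contradiction: if it fails for some $\eta_0>0$, there are $v_k\in X_1$ with $\|v_k\|_{X_0}=1$ and $1>\eta_0\|v_k\|_{X_1}+k\|v_k\|_{X_{-1}}$; then $\{v_k\}$ is bounded in $X_1$, so by compactness of $X_1\hookrightarrow X_0$ a subsequence converges in $X_0$ to some $v$ with $\|v\|_{X_0}=1$, while $\|v_k\|_{X_{-1}}\to 0$ together with the continuous embedding $X_0\hookrightarrow X_{-1}$ forces $v=0$, a contradiction.

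Next I would verify the two conditions. For (i), by the Bochner triangle inequality and H\"older's inequality in time, $\bigl\|\int_{t_1}^{t_2}u_m(\tau)\,d\tau\bigr\|_{X_1}\le (t_2-t_1)^{1-1/p_1}\,\|u_m\|_{L^{p_1}(\inte,X_1)}$, so these integrals lie in a bounded subset of $X_1$, hence a relatively compact subset of $X_0$ by the compact embedding. For (ii), identify $u_m$ with its absolutely continuous $X_{-1}$-valued representative, so that $u_m(t+h)-u_m(t)=\int_t^{t+h}\frac{du_m}{d\tau}(\tau)\,d\tau$; H\"older's inequality in time and the uniform bound on $\frac{du_m}{dt}$ in $L^{p_2}(\inte,X_{-1})\subseteq L^{1}(\inte,X_{-1})$ give $\|u_m(\cdot+h)-u_m(\cdot)\|_{L^{p_1}(0,T-h;X_{-1})}\le C'h^{\theta}$ for some $\theta=\theta(p_1,p_2)>0$, uniformly in $m$. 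Applying the Ehrling inequality pointwise in $t$ and taking the $L^{p_1}$ norm over $(0,T-h)$,
\be
\|u_m(\cdot+h)-u_m(\cdot)\|_{L^{p_1}(0,T-h;X_0)}\le 2\eta\,\sup_m\|u_m\|_{L^{p_1}(\inte,X_1)}+C_\eta C' h^{\theta}.
\ee
Given $\varepsilon>0$, first choose $\eta$ so small that the first term is below $\varepsilon/2$ (this is where the uniform $L^{p_1}(\inte,X_1)$ bound enters), then choose $h$ small so that the second term is below $\varepsilon/2$; this is (ii). The Fr\'echet--Kolmogorov criterion now applies.

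The main obstacle is really the second ``black box'': the Ehrling inequality is a one-line contradiction argument, but the vector-valued Fr\'echet--Kolmogorov criterion carries the genuine analytic content. If one had to prove it from scratch, the route is to regularize each $u_m$ by mollification in the time variable: condition (i) makes these mollified functions an equibounded, equicontinuous, and pointwise relatively compact family in $C(\inte,X_0)$, to which Arzel\`a--Ascoli applies, while condition (ii) bounds the mollification error uniformly in $m$ in the $L^{p_1}(\inte,X_0)$ norm; a diagonal extraction over the regularization parameter then yields the convergent subsequence. (Should one want the stronger conclusion of convergence in $C(\inte,X_0)$ valid when $p_2>1$, the same scheme works with the equicontinuity established directly in $C(\inte,X_{-1})$ and then upgraded via Ehrling; the statement above only requires the $L^{p_1}$ conclusion, which is what is invoked in Lemma~\ref{lm:psidermix} and in the extraction of the weak solution.)
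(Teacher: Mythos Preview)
The paper does not give a proof of this theorem: it is quoted verbatim as \cite[Lemma~8.4]{CF} and used as a black box in the extraction of the weak solution. Your argument via the Ehrling--Lions interpolation inequality combined with Simon's translation criterion is one of the standard and correct routes to the Aubin--Lions lemma; under the implicit hypothesis $1<p_1,p_2<\infty$ (needed anyway for the reflexivity used in the usual statements) your exponent $\theta=1-1/p_2$ is indeed positive and the estimate in step~(ii) goes through as written. The proof recorded in \cite{CF} follows a slightly different path---one first extracts a weak limit in $L^{p_1}(\inte,X_1)$ by reflexivity and then upgrades to strong convergence in $L^{p_1}(\inte,X_0)$ via Ehrling applied directly to $u_m-u$, controlling the $X_{-1}$ piece through the time-derivative bound---but the two arguments are of comparable length and difficulty, and either would serve here.
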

\begin{corr}
There exist a subsequence of $\psi^n$ which converges in $$L^p(\inte,W^{1,p}(\D_0)).$$
\end{corr}
\begin{proof}
We will apply Theorem \ref{th:comp} with $X_1=W^{2,p}(\D_0)$ and $X_0=X_{-1}=W^{1,p}(\D_0)$. The uniform estimates are Claim \ref{cl:psiw2p} and Lemma \ref{lm:derpsin}. Observe that we obtained estimates in $L^\infty(\inte,X_1)$ for the functions and $L^\infty(\inte,X_{-1})$ for the time derivatives, clearly they imply $L^p(\inte,X_1)$ and $L^p(\inte,X_{-1})$ estimates, respectively.
\end{proof}
\par
\begin{proposition} Let $\psi_0$ be such that $\LH\psi_0=\o_o\in L^{\infty}(\D_0)$ and $\mathbf{F}\in L^{\infty}(\D_0\times\inte)$ then there exists $\psi$ such that $\LH\psi=\o\in L^{\infty}(\D_0\times\inte)$ which obeys the identity
\be
\begin{split}
\int\limits _{\D_0} {\o_0(x) \phi(x,0) dxdy}&-\int\limits_{0}^T{\int\limits_{\D_0}{\o(x,t)\der{\phi}{t}(x,t)dxdy}dt}+\int\limits_{0}^T{\int\limits_{\D_0}{\der{\psi}{y}\o \der{\phi}{x}dxdy}dt}\\
&-\int\limits_{0}^T{\int\limits_{\D_0}{\der{\psi}{x}\o\der{\phi}{y}dxdy}dt}=\int\limits_{0}^T{\int\limits_{\D_0}{\left(\der{F_y}{x}-\der{F_x}{y} \right)\phi dxdy}dt},
\end{split}
\ee
for every test function $\phi$.
\end{proposition}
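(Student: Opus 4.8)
The plan is to pass to the limit in the weak identity satisfied by the smooth approximations $\psi^n$ built in Subsection~\ref{sbc:constr}. Each $\psi^n$ is a classical helical solution of the reduced problem: since $\o^n=(\OO^n)_z$ is smooth by Lemma~\ref{lm:oninftest} and $\LH\psi^n=\o^n$, the function $\psi^n$ satisfies equation~(\ref{eq:dynohel}) pointwise with data $\o_0^n$ and $f^n$, so multiplying~(\ref{eq:dynohel}) by a test function $\phi$ with compact support in $\D_0\times\inte$ and integrating by parts in $x,y,t$ shows that $\psi^n$ obeys the identity~(\ref{eq:weak}) with $\LH\psi^n$ and $\LH\psi_0$ replaced by $\o^n$ and $\o_0^n$. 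It then remains to let $n\to\infty$ in each term.

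First I would fix the modes of convergence. By Claim~\ref{cl:psiw2p} the functions $\o^n=\LH\psi^n$ are uniformly bounded in $L^\infty(\D_0\times\inte)$, so, along a subsequence, they converge weak-$*$ in $L^\infty(\D_0\times\inte)$ to some $\o$. By the Aubin--Lions lemma (Theorem~\ref{th:comp}), applied with the uniform $W^{2,p}$ bound of Claim~\ref{cl:psiw2p} and the uniform $W^{1,p}$ bound on $\partial_t\psi^n$ of Lemma~\ref{lm:derpsin} --- as already recorded in the corollary preceding this proposition --- a further subsequence satisfies $\psi^n\to\psi$ strongly in $L^p(\inte,W^{1,p}(\D_0))$; in particular $\nabla\psi^n\to\nabla\psi$ strongly in $L^p(\D_0\times\inte)$. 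Testing $\LH\psi^n=\o^n$ against $\varphi\in C_c^\infty(\D_0\times\inte)$, integrating by parts as in~(\ref{eq:intp}), and using the strong convergence of $\nabla\psi^n$ on the one side and the weak-$*$ convergence of $\o^n$ on the other, identifies $\o=\LH\psi$ in the sense of distributions; since $\o\in L^\infty$ this yields $\LH\psi\in L^\infty(\D_0\times\inte)$. The zero boundary condition $\psi|_{\partial\D_0}=0$ passes to the limit because the trace operator is continuous on $W^{1,p}(\D_0)$ and $\psi^n|_{\partial\D_0}=0$. Finally, by the remark following Claim~\ref{cl:mol}, $\o_0^n\to\o_0$ in every $L^q(\D_0)$ and $f^n\to f$ in every $L^q(\D_0\times\inte)$ for $q<\infty$, in particular in $L^1$.

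Now I would pass to the limit term by term in~(\ref{eq:weak}) written for $\psi^n$. The term $-\int_0^T\!\!\int_{\D_0}\o^n\,\partial_t\phi\,dxdy\,dt$ converges by weak-$*$ convergence against the fixed $L^1$ function $\partial_t\phi$; the initial-data term $\int_{\D_0}\o_0^n\,\phi(\cdot,0)\,dxdy$ and the forcing term $\int_0^T\!\!\int_{\D_0} f^n\phi\,dxdy\,dt$ converge by the $L^1$ convergence of $\o_0^n$ and $f^n$. For the quadratic terms, consider $\int_0^T\!\!\int_{\D_0}\partial_y\psi^n\,\o^n\,\partial_x\phi\,dxdy\,dt$ and write it as the pairing of $\o^n$ with $g^n:=\partial_y\psi^n\,\partial_x\phi$; since $\partial_y\psi^n\to\partial_y\psi$ in $L^p$ and $\partial_x\phi$ is bounded with $\D_0$ of finite measure, $g^n\to\partial_y\psi\,\partial_x\phi$ strongly in $L^1(\D_0\times\inte)$, while $\o^n$ is bounded in $L^\infty$ and converges weak-$*$; hence the pairing converges to $\int_0^T\!\!\int_{\D_0}\partial_y\psi\,\o\,\partial_x\phi\,dxdy\,dt$, and symmetrically for the term with $\partial_x\psi\,\o\,\partial_y\phi$. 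Collecting these limits gives exactly the identity claimed for $\psi$.

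The main obstacle is this last step --- the passage to the limit in the quadratic nonlinearity. Weak-$*$ convergence of $\o^n$ by itself gives no control of the product $\partial_y\psi^n\,\o^n$, and one genuinely needs the \emph{strong} convergence of $\nabla\psi^n$. That strong compactness is precisely what Aubin--Lions supplies, and its only nontrivial input is the uniform bound on $\partial_t\psi^n$ in $W^{1,p}$, which in turn rests on Lemma~\ref{lm:psidermix} and, through it, on Yudovich's negative-order elliptic estimate (Theorem~\ref{th:negEE}). A secondary point worth checking carefully is that the construction in Subsection~\ref{sbc:constr} really produces classical solutions of the \emph{reduced} weak equation, so that~(\ref{eq:weak}) holds for each $\psi^n$ with no extra terms; this is arranged by Lemma~\ref{lm:repo} and the helical reduction correspondence, so it is a matter of bookkeeping rather than of analysis.
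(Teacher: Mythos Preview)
Your proposal is correct and follows essentially the same route as the paper: extract a weak-$*$ limit of $\o^n$ in $L^\infty$, use Aubin--Lions (via Claim~\ref{cl:psiw2p} and Lemma~\ref{lm:derpsin}) to get strong convergence of $\nabla\psi^n$ in $L^p$, and then pass to the limit in the weak formulation term by term, the only nontrivial step being the nonlinear one handled by the strong\,$\times$\,weak-$*$ pairing. One small misattribution: the uniform $L^\infty$ bound on $\o^n$ comes from Lemma~\ref{lm:oninftest}, not Claim~\ref{cl:psiw2p} (the latter is a consequence of the former via elliptic regularity).
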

\begin{proof}
Let $\o^n=\O^n_z$ be the sequence of functions, which we built in Subsection \ref{sbc:constr}. By Lemma \ref{lm:oninftest}, they are uniformly bounded in $L^{\infty}(\D_0)$ and therefore, we can extract a subsequence, which converges in weak-* topology to a function $\o$. By extracting from the subsequence a subsubsequence, we see that $\psi^n$ will converge in $L^p(\inte,W^{1,p}(\D_0))$ to a function $\psi$. By uniqueness of the limits, $\psi$ must be the stream function which corresponds to $\o$. Therefore $\nabla \psi^n$ will converge to $\nabla\psi$ in $L^p(\inte,L^p(D_0))\subseteq L^p(\D_0\times\inte)$. Therefore $\o^n \der{\psi^n}{x_i}$ converge weakly in $L^p(\D_0\times\inte)$. For any smooth function$\phi$ with compact support in $\D_0\times\inte$,the functions $\psi_n$ is the weak solution of the Euler Equations in the sense of Definition \ref{df:main} and therefore they obey
\be
\begin{split}
\int\limits _{\D_0} {\o_0^n(x) \phi(x,0) dxdy}&-\int\limits_{0}^T{\int\limits_{\D_0}{\o^n\der{\phi}{t}(x,t)dxdy}dt}+\int\limits_{0}^T{\int\limits_{\D_0}{\der{\psi^n}{y}\o^n \der{\phi}{x}dxdy}dt}\\
&-\int\limits_{0}^T{\int\limits_{\D_0}{\der{\psi^n}{x}\o^n\der{\phi}{y}dxdy}dt}=\int\limits_{0}^T{\int\limits_{\D_0}{\left(\der{F^n_y}{x}-\der{F^n_x}{y} \right)\phi dxdy}dt}.
\end{split}
\ee
We now see that all the terms in this equation converge $L^p$ weakly to the appropriate limits and therefore, $\psi$, the limit, will obey the required equality and therefore it is the solution of the Euler Equations.
\end{proof}
\section*{Acknowledgments}
This work was supported in part by the BSF
grant no. 2004271, the ISF grant no. 120/06, and the
NSF grants no. DMS-0504619 and no. DMS-0708832.
\bibliographystyle{plain}
\bibliography{Bib-HelicalEuler}
\end{document}